\documentclass[12pt]{article}
% Load packages
\usepackage{amssymb,bm}
\usepackage{amsmath}
\usepackage{amsthm}
\usepackage{graphicx}
\usepackage{hyperref}
\hypersetup{pdfborder=0 0 0}

% SET MARGINS
\setlength{\oddsidemargin}{0in}
\setlength{\evensidemargin}{-0.0625in}
\setlength{\textwidth}{6.5in}
\setlength{\topmargin}{-.5in}
\setlength{\textheight}{8.8in}

% Equation numbering

%Theorems and Proofs
\newtheorem{theorem}{{\sc Theorem}}[section]

\newtheorem{definition}[theorem]{Definition}

% Blackboard-bold font
\newcommand{\bb}[1]{\mathbb{ #1}}

%Useful miscellani
\bmdefine\Bone{1}

\newcommand{\bra}[1]{\overline{#1}}

\newcommand{\cof}{\mathrm{cof}}
\newcommand{\rank}{\mathrm{rank}}

\newcommand{\nth}[1]{\displaystyle\frac{1}{#1}}

\newcommand{\Md}{\partial}
\renewcommand{\Hat}[1]{\widehat{#1}}
\newcommand{\Tld}[1]{\widetilde{#1}}

% \dashint better than Av: is \dashint_{<domain>}

\def\XXint#1#2#3{{\setbox0=\hbox{$#1{#2#3}{\int}$ }
\vcenter{\hbox{$#2#3$ }}\kern-.6\wd0}}

\newcommand{\im}{\mathfrak{Im}}
\newcommand{\re}{\Re\mathfrak{e}}

%Hot words

\newcommand{\rhs}{right-hand side}

\newcommand{\nbh}{neighborhood}
\newcommand{\IFF}{if and only if }

%          ABCs

% Abbreviate definitions of greek symbols
\newcommand{\Ga}{\alpha}
\newcommand{\Gb}{\beta}
\newcommand{\Gd}{\delta}
\newcommand{\Ge}{\epsilon}

\newcommand{\Gve}{\varepsilon}

\newcommand{\Gg}{\gamma}

\newcommand{\Gl}{\lambda}

\newcommand{\Gth}{\theta}

\newcommand{\Gs}{\sigma}

\newcommand{\Go}{\omega}

\newcommand{\Gz}{\zeta}
\newcommand{\GD}{\Delta}

\newcommand{\GS}{\Sigma}

% Abbreviate definitions of bold greek symbols
\bmdefine\BGa{\alpha}
\bmdefine\BGb{\beta}
\bmdefine\BGd{\delta}
\bmdefine\BGe{\epsilon}
\bmdefine\BGve{\varepsilon}
\bmdefine\BGf{\phi}
\bmdefine\BGvf{\varphi}
\bmdefine\BGg{\gamma}
\bmdefine\BGc{\chi}
\bmdefine\BGi{\iota}
\bmdefine\BGk{\kappa}
\bmdefine\BGl{\lambda}
\bmdefine\BGn{\eta}
\bmdefine\BGm{\mu}
\bmdefine\BGv{\nu}
\bmdefine\BGp{\pi}
\bmdefine\BGth{\theta}
\bmdefine\BGvth{\vartheta}
\bmdefine\BGr{\rho}
\bmdefine\BGvr{\varrho}
\bmdefine\BGs{\sigma}
\bmdefine\BGvs{\varsigma}
\bmdefine\BGt{\tau}
\bmdefine\BGj{\tau}
\bmdefine\BGu{\upsilon}
\bmdefine\BGo{\omega}
\bmdefine\BGx{\xi}
\bmdefine\BGy{\psi}
\bmdefine\BGz{\zeta}
\bmdefine\BGD{\Delta}
\bmdefine\BGF{\Phi}
\bmdefine\BGG{\Gamma}
\bmdefine\BGL{\Lambda}
\bmdefine\BGP{\Pi}
\bmdefine\BGT{\Theta}
\bmdefine\BGS{\Sigma}
\bmdefine\BGU{\Upsilon}
\bmdefine\BGO{\Omega}
\bmdefine\BGX{\Xi}
\bmdefine\BGY{\Psi}

% Bold fraktur
\bmdefine\BFM{\mathfrak{M}}
\bmdefine\BFb{\mathfrak{b}}
\bmdefine\BFk{\mathfrak{k}}
\bmdefine\BFm{\mathfrak{m}}
\bmdefine\BFu{\mathfrak{u}}
\bmdefine\BFv{\mathfrak{v}}

% Abbreviate symbols for caligraphic letters
\newcommand{\CA}{{\mathcal A}}

\newcommand{\CP}{{\mathcal P}}

\newcommand{\CR}{{\mathcal R}}

\newcommand{\CW}{{\mathcal W}}

% Abbreviate symbols for bold caligraphic letters
\bmdefine\BCA{{\mathcal A}}
\bmdefine\BCB{{\mathcal B}}
\bmdefine\BCC{{\mathcal C}}
\bmdefine\BCD{{\mathcal D}}
\bmdefine\BCE{{\mathcal E}}
\bmdefine\BCF{{\mathcal F}}
\bmdefine\BCG{{\mathcal G}}
\bmdefine\BCH{{\mathcal H}}
\bmdefine\BCI{{\mathcal I}}
\bmdefine\BCJ{{\mathcal J}}
\bmdefine\BCK{{\mathcal K}}
\bmdefine\BCL{{\mathcal L}}
\bmdefine\BCM{{\mathcal M}}
\bmdefine\BCN{{\mathcal N}}
\bmdefine\BCO{{\mathcal O}}
\bmdefine\BCP{{\mathcal P}}
\bmdefine\BCQ{{\mathcal Q}}
\bmdefine\BCR{{\mathcal R}}
\bmdefine\BCS{{\mathcal S}}
\bmdefine\BCT{{\mathcal T}}
\bmdefine\BCU{{\mathcal U}}
\bmdefine\BCV{{\mathcal V}}
\bmdefine\BCW{{\mathcal W}}
\bmdefine\BCX{{\mathcal X}}
\bmdefine\BCY{{\mathcal Y}}
\bmdefine\BCZ{{\mathcal Z}}

% Abbreviate symbols for bold-face letters
\bmdefine\Bzr{ 0}
\bmdefine\Ba{ a}
\bmdefine\Bb{ b}
\bmdefine\Bc{ c}
\bmdefine\Bd{ d}
\bmdefine\Be{ e}
\bmdefine\Bf{ f}
\bmdefine\Bg{ g}
\bmdefine\Bh{ h}
\bmdefine\Bi{ i}
\bmdefine\Bj{ j}
\bmdefine\Bk{ k}
\bmdefine\Bl{ l}
\bmdefine\Bm{ m}
\bmdefine\Bn{ n}
\bmdefine\Bo{ o}
\bmdefine\Bp{ p}
\bmdefine\Bq{ q}
\bmdefine\Br{ r}
\bmdefine\Bs{ s}
\bmdefine\Bt{ t}
\bmdefine\Bu{ u}
\bmdefine\Bv{ v}
\bmdefine\Bw{ w}
\bmdefine\Bx{ x}
\bmdefine\By{ y}
\bmdefine\Bz{ z}
\bmdefine\BA{ A}
\bmdefine\BB{ B}
\bmdefine\BC{ C}
\bmdefine\BD{ D}
\bmdefine\BE{ E}
\bmdefine\BF{ F}
\bmdefine\BG{ G}
\bmdefine\BH{ H}
\bmdefine\BI{ I}
\bmdefine\BJ{ J}
\bmdefine\BK{ K}
\bmdefine\BL{ L}
\bmdefine\BM{ M}
\bmdefine\BN{ N}
\bmdefine\BO{ O}
\bmdefine\BP{ P}
\bmdefine\BQ{ Q}
\bmdefine\BR{ R}
\bmdefine\BS{ S}
\bmdefine\BT{ T}
\bmdefine\BU{ U}
\bmdefine\BV{ V}
\bmdefine\BW{ W}
\bmdefine\BX{ X}
\bmdefine\BY{ Y}
\bmdefine\BZ{ Z}

% Sans Serif capital letters

%%% PAPER

\title{Reconstructing Stieltjes functions from their approximate values: a
  search for a needle in a haystack}
\author{Yury Grabovsky}
% keywords: linear passive media, spectrum reconstruction, model identification
% Stieltjes and Nevanlinna functions, electrochemical impedance spectrum,
% Kramers-Kronig relations
% 78A57 Electrochemistry
% 78A48 Composite media; random media in optics and electromagnetic theory
% 49K40 Sensitivity, stability, well-posedness 
% 30H15 Nevanlinna spaces and Smirnov spaces
\begin{document}
\maketitle
\begin{abstract}
  Material response of real, passive, linear, time-invariant media to external
  influences is described by complex analytic functions of frequency that can
  always be written in terms of Stieltjes functions---a special class of
  analytic functions mapping complex upper half-plane into
  itself. Reconstructing such functions from their experimentally measured
  values at specific frequencies is one of the central problems that we
  address in this paper. A definitive reconstruction algorithm that produces a
  certificate of optimality as well as a graphical representation of the
  uncertainty of reconstruction is proposed. Its effectiveness is demonstrated
  in the context of the electrochemical impedance spectroscopy.
\end{abstract}
\tableofcontents
\section{Introduction}
\setcounter{equation}{0} 
\label{sec:intro} 
Three fundamental physical principles: linearity, time-invariance, and
passivity are responsible for the ubiquity of Stieltjes functions in physics
and engineering. \emph{Stieltjes class} refers to a special class of complex
analytic functions that describe the response of linear media or devices to
external influences. If $E(t)$ denotes such an influence, and $J(t)$ the
response, then the linear, time-invariant dependence of $J(t)$ on
$E(t)$ could be formally written (without regard to the function spaces to which $E(t)$ and
$J(t)$ may belong) as
\begin{equation}
  \label{LTI}
  J(t)=\Gg_{0}E(t)+\int_{-\infty}^{t}a(t-\tau)E(\tau)d\tau,
\end{equation}
where the causality principle, limiting the dependence of $J(t)$ only on the
present and past values of $E(\tau)$, has been applied. For a mathematically rigorous
discussion of convolution-type formulas, like (\ref{LTI}) we refer the reader
to many treatises on linear systems theory, e.g., \cite{wobe65,zema72}.

Due to the resemblance of the integral
in (\ref{LTI}) to a convolution, it is convenient to extend the memory kernel $a(s)$ to
negative values of $s$ by zero
\[
a_{0}(s)=
\begin{cases}
  a(s),&s\ge 0,\\
  0,&s<0,
\end{cases}
\]
and rewrite (\ref{LTI}) as a convolution
\begin{equation}
  \label{LTIconv}
  J(t)=\Gg_{0}E(t)+\int_{-\infty}^{\infty}a_{0}(t-\tau)E(\tau)d\tau.
\end{equation}
Assuming now that $a_{0}\in L^{1}(\bb{R})$ and $\{E,J\}\subset L^{2}(\bb{R})$
we can take the Fourier transform of (\ref{LTIconv}): 
\begin{equation}
  \label{FLTI}
  \Hat{J}(\Go)=(\Gg_{0}+\Hat{a}_{0}(\Go))\Hat{E}(\Go).
\end{equation}
Two different definitions of the Fourier transform are common in physics,
depending on the representation of the input $E(t)$ as a superposition of
``elementary harmonics''. In signal processing and electrical circuit theory
the elementary harmonics are functions $e^{i\Go t}$, leading to the
representation
\[
E(t)=\nth{2\pi}\int_{-\infty}^{\infty}\Hat{E}(\Go)e^{i\Go t}d\Go,\qquad
\Hat{E}(\Go)=\int_{-\infty}^{\infty}E(t)e^{-i\Go t}dt.
\]
In electromagnetics the elementary harmonics are the plane waves $e^{i(\Bk\cdot\Bx-\Go t)}$. 
In this case one uses
\[
E(t)=\nth{2\pi}\int_{-\infty}^{\infty}\Hat{E}(\Go)e^{-i\Go t}d\Go,\qquad
\Hat{E}(\Go)=\int_{-\infty}^{\infty}E(t)e^{i\Go t}dt.
\]
In the former case (e.g. impedance of electrical circuits) causality,
$a_{0}(s)=0$, when $s<0$, implies that $b(\Go)=\Gg_{0}+\Hat{a}_{0}(\Go)$ is
analytic in the lower half-plane of the complex $\Go$-plane, in the latter
(e.g. complex dielectric permittivity), $b(\Go)$ is analytic in the upper
half-plane. In each case the fact that the memory kernel $a(s)$ is a
real-valued function implies that $b(\Go)$ has the symmetry
\begin{equation}
  \label{R2sym}
  \bra{b(\Go)}=b(-\bra{\Go}).
\end{equation}
The passivity principle, that says that the medium can only absorb or
dissipate energy is a much more delicate condition leading to the
nonnegativity of the real or imaginary parts of functions related to
$b(\Go)$. In one way or another in each and every application the description
of the linear, time-invariant, passive media response can be formulated in
terms of functions from the Stieltjes class\footnote{There is no universal
  agreement on the names attached to various related classes of analytic
  functions. That is why we give a full formal definition here.} $\mathfrak{S}$.
\begin{definition}
  \label{def:Stcl}
We say that a complex function $f$ analytic in $\bb{C}\setminus\bb{R}_{+}$
belongs to the Stieltjes class $\mathfrak{S}$ if it
is either a nonnegative real constant or has the following three properties.
\begin{enumerate}
\item[(i)] $\im(f(z))>0$ for all $z\in\bb{C}$ with $\im(z)>0$;
\item[(ii)] $f(x)>0$ for all $x<0$;
\item[(iii)] $\bra{f(z)}=f(\bra{z})$.
\end{enumerate}
\end{definition}
For example, the complex electromagnetic permittivity $\Gve(\Go)$ of
dielectrics can be written as $\Gve(\Go)=f(\Go^{2})$, where $f\in\mathfrak{S}$
and $\im(\Go)>0$ \cite{lali60:8,feyn64}. Both the complex impedance and
admittance functions $Z(\Go)$ and $Y(\Go)$, respectively, of electrical circuits
made of resistors, capacitors and inductive coils can be written as
$Z(\Go)=i\Go f(\Go^{2})$, where $f\in\mathfrak{S}$ and $\im(\Go)<0$
\cite{brune31}. In high energy physics it is the energy (or momentum) that
plays the role of the complex variable and the scattering amplitude is the Stieltjes function
\cite{khur57,macd59,hmsw61,nuss72,capr74}. In the theory of binary conducting composites the
dependence of the effective conductivity $\Gs^{*}$ of the composite on the
ratio $h=\Gs_{1}/\Gs_{2}$ of the conductivities of two constituents is also
expressible in terms of Stieltjes functions, \cite{berg78,milt81b,gopa83,lipt01}
$\Gs^{*}/\Gs_{1}=1+(1-h)f(-h)$, where $f\in\mathfrak{S}$.
There are many other applications (see e.g., \cite{milt17}), where the models
are linear, and causality, time-invariance, and passivity (together with real
values of the memory kernel) lead to system descriptions in terms of functions
from the Stieltjes class $\mathfrak{S}$.

In this paper we consider the central \emph{discrete} problem of the theory of
Stieltjes functions that arises in all applications: the identification of
$f\in\mathfrak{S}$ from $n$ measurements at the $n$ distinct points
$\{z_{1},\ldots,z_{n}\}\subset\bb{H}_{+}$, where $\bb{H}_{+}$ denotes the
complex upper half-plane. The analyticity of $f\in\mathfrak{S}$ places
constraints on the values $f(z_{j})$. It turns out that the constraints are so
delicate that even if one truncates the infinite decimal representations of
the values $w_{j}=f(z_{j})$ in order to store them as floating point numbers
in a computer, one violates these constraints when $n\ge 15$. In most
applications the values $w_{j}=f(z_{j})$ are obtained through experimental
measurements where the noise level is much larger than the round-off errors in
floating point arithmetic. In view of these considerations the central problem
is not the recovery of $f\in\mathfrak{S}$ from its exact values
$w_{j}=f(z_{j})$, but rather the minimization of the sum of squares
\begin{equation}
  \label{lsqStielt}
  \GS(\Bw,\Bz)=\inf_{f\in\mathfrak{S}}\sum_{j=1}^{n}|f(z_{j})-w_{j}|^{2}
\end{equation}
for a given set of noisy measurements $\Bw\in\bb{C}^{n}$. The problem of
solving (\ref{lsqStielt}) bears only superficial resemblance to the classical
linear least squares problem. The main difficulty is that the Stieltjes class
$\mathfrak{S}$ is not a vector space, but a convex cone.

In various guises this problem has been studied continuously for almost a
century see e.g.,
\cite{hmsw61,ciulli69,aogr95,mem97,digr01,ouch06,zhch09,ocg12,ou14,bouk20,sriv20}.
Yet, so far, no definitive algorithm for solving (\ref{lsqStielt}) has
emerged, and new algorithms and new papers on the subject continue to appear
with unerring regularity (e.g., \cite{bouk95,lspv05,mash16,zpsp20}, to give a
taste). In this paper we propose such a definitive algorithm, described in
Section~\ref{sec:lsq}, that is aimed to settle the question once and for
all. The algorithm comes with a ``certificate of optimality'' based on the
work of I.~Caprini \cite{capr74,capr79,capr80,capr81}. The FORTRAN
implementation of the algorithm is available from Github \cite{Stgh21}.  The
method is easily extendable to weighted sums of squares as in Caprini's
papers.

The main issue lies in intricacies of the geometry of the \emph{interpolation body}
\begin{equation}
  \label{Vofzdef}
  V(\Bz)=\{(f(z_{1}),\ldots,f(z_{n}))\in\bb{C}^{n}: f\in\mathfrak{S}\},\quad
\Bz=(z_{1},\ldots,z_{n}),
\end{equation}
which is known to be a closed convex cone in $\bb{C}^{n}$ with non-empty
interior. In practice, however, $V(\Bz)$ is massively dimensionally
degenerate, shaped very much like a needle or a sword. Even for modest values
of $n$ the smallest thickness of $V(\Bz)$ is well below double precision
floating point arithmetic. The proposed algorithm harnesses this dimensional
degeneracy and turns it from a curse into a blessing. The algorithm produces
not only the solution $f\in\mathfrak{S}$ of (\ref{lsqStielt}), but also shows
the uncertainty associated with the given data (see
Figure~\ref{fig:Voigt}). Typical for analytic continuation problems the
uncertainty balloons and explodes once one goes outside of the frequency range
containing the measurements \cite{deto18,trefe19,bdmh19,grho-annulus,grho-gen}
(see Figure~\ref{fig:Voigt_minus}).

The algorithm described in Section~\ref{sec:lsq} is an outcome of the
understanding of the geometry of the interpolation body $V(\Bz)$ discussed in
Sections~\ref{sec:prelim} and \ref{sec:needle} as well as the optimality
conditions described in Theorem~\ref{th:Caprini}. The key ingredient in the
algorithm is the use of the local minima of the Caprini function to augment
the ad-hoc basis of the space of Stieltjes functions. The final step is based
on the realization that the near-optimal solution for a given noisy data is an
optimal solution for ``nearby data'' representing a slightly different
realization of the noise. The FORTRAN implementation of the algorithm
is publicly available \cite{Stgh21}.

The fact that points $z_{j}$ lie in the upper half-plane, and not on the real
line is essential for our analysis. When some or all of the points $z_{j}$ lie
on the negative semi-axis a modification of our analysis given in
\cite[Ch.~V.3]{Krein:1977:MMP} and \cite{krnu98} is necessary. Complementary
to the setting of this paper is the situation where the imaginary part of
$f(z)$ is known on a finite sub-interval of the positive real axis, while the
real part is known only at finitely many points in that same interval. Another
complementary situation is when measurements are done in the time domain. The
former is studied in \cite{mem97}, the latter is addressed in
\cite[Chapter~6]{milt17} and \cite{mmp21}, where the collapse onto a needle is
reflected in the fact that the time dependent bounds for an appropriate input
and at a particular time almost coincide: one is viewing from a direction
along the line of the needle \cite{gmpc20}.

This paper is structured as follows. We begin our discussion with the
recollection of known results about Stieltjes functions in
Section~\ref{sec:prelim}. In Section~\ref{sec:needle} we show that the
interpolation body $V(\Bz)$ is shaped like a needle or maybe like a sword. (Our
language has an inadequate vocabulary limited to two and three-dimensional
shapes.) In Sections~\ref{sec:lsq} and \ref{sec:SM} we describe the algorithm.
The performance of the algorithm is demonstrated in Section~\ref{sec:Voigt} in
the context of electrochemistry, where the processes of corrosion and
electrolysis that occur in batteries and in many other natural and man-made
systems can be modeled by Voigt circuits---electrical circuits made only of
resistors and capacitors \cite{ssk93,bafa00,bamc05}. The electrochemical
impedance spectrum (EIS) function $Z(\Go)$ can then be written as $f(-i\Go)$
for some $f\in\mathfrak{S}$. Thus, the values $Z_{j}=f(-i\Go_{j})$,
$j=1,\ldots,n$ can be measured experimentally at particular frequencies
$\Go_{1},\ldots,\Go_{n}$. Our algorithm takes noisy measurements of
$Z_{1},\ldots,Z_{n}$ as the input and generates physically admissible EIS
function $Z(\Go)$, representing it both numerically and as the explicit
complex impedance function of a small Voigt circuit. It also displays the
certificate of optimality as well as the uncertainty of reconstruction of the
EIS function for the specific data. Figure~\ref{fig:Voigt} shows the typical
graphical output of the algorithm.

\section{Preliminaries and background}
\setcounter{equation}{0} 
\label{sec:prelim} 
\subsection{The Nevanlinna-Pick theorem for Stieltjes functions}
We recall two equivalent characterizations of the Stieltjes class
$\mathfrak{S}$. One exhibits the centrality of
property (i) in Definition~\ref{def:Stcl}, which is an expression of passivity
in frequency domain. The other gives an explicit representation of all
Stieltjes functions.  Let $ \bb{H}_{+}=\{z\in\bb{C}:\im(z)>0\} $ denote the
complex upper half-plane.
\begin{definition}
  \label{def:Ncl}
We say that $f(z)$ analytic in $\bb{H}_{+}$ is a Nevanlinna function if it
is either a real constant function or
 $\im(f(z))>0$ for all $z\in\bb{H}_{+}$.
\end{definition}
Other names for this class, such as Herglotz functions, Pick functions, and
R-functions are also used by various communities.
\begin{theorem}
\label{th:SviaR}
  $f\in\mathfrak{S}$ \IFF both $f$ and $z\mapsto zf(z)$ are Nevanlinna functions.
\end{theorem}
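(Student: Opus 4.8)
The plan is to route everything through the classical integral (Herglotz/Riesz) representation of Nevanlinna functions, using it in each direction to reduce the claim to the \emph{Stieltjes representation} $f(z)=a+\int_{0}^{\infty}\frac{d\mu(t)}{t-z}$ with $a\ge 0$, $\mu\ge 0$ a positive Borel measure on $[0,\infty)$, and $\int_{0}^{\infty}\frac{d\mu(t)}{1+t}<\infty$. Functions of this form are manifestly in $\mathfrak{S}$ (one checks (i)--(iii) directly from the formula, the nonnegative-constant case being $\mu=0$), so it suffices to show that $f\in\mathfrak{S}$ is equivalent to $f$ admitting such a representation. I would first dispose of the trivial case where $f$ is a nonnegative real constant: then $f$ and $zf(z)$ are plainly Nevanlinna, and conversely if $f$ is constant and both are Nevanlinna then $f$ is a real constant with $\im\bigl(zf(z)\bigr)=(\re f)\,\im z\ge 0$, forcing $f\ge 0$. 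So assume henceforth $f$ is not a nonnegative constant.

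For the forward implication, suppose $f\in\mathfrak{S}$. Property (i) says outright that $f$ is a Nevanlinna function, so it has a representation $f(z)=\alpha+\beta z+\int_{\bb{R}}\bigl(\tfrac{1}{t-z}-\tfrac{t}{1+t^{2}}\bigr)d\mu(t)$ with $\beta\ge 0$, $\mu\ge 0$, $\int_{\bb{R}}\frac{d\mu(t)}{1+t^{2}}<\infty$. Since $f$ is analytic and, by (ii), real-valued on $(-\infty,0)$, uniqueness of the representation (the Stieltjes--Perron inversion formula) forces $\operatorname{supp}\mu\subseteq[0,\infty)$. Next I would examine $f(x)$ as $x\to-\infty$: the requirement $f(x)>0$ rules out $\beta>0$, and it also forces $\int_{0}^{\infty}\frac{d\mu(t)}{1+t}<\infty$ (if that integral diverged, the integral term would drive $f(x)\to-\infty$); this legitimizes rewriting $f(z)=a+\int_{0}^{\infty}\frac{d\mu(t)}{t-z}$ with $a=\lim_{x\to-\infty}f(x)\ge 0$. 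From this Stieltjes form a one-line computation gives $\im\bigl(zf(z)\bigr)=\im z\,\bigl(a+\int_{0}^{\infty}\frac{t\,d\mu(t)}{|t-z|^{2}}\bigr)\ge 0$ on $\bb{H}_{+}$, so $zf(z)$ is a Nevanlinna function.

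For the converse, suppose both $f$ and $g(z):=zf(z)$ are Nevanlinna functions and write the Herglotz representation of $f$ as above. Comparing $g(iy)$ as $y\to\infty$ with the at-most-linear growth of a Nevanlinna function along the imaginary axis kills the quadratic term, giving $\beta=0$. A calculation matching the Herglotz representations of $f$ and $g$ (carefully handling the regularizing terms $\tfrac{t}{1+t^{2}}$) then identifies the representing measure of $g$ with $t\,d\mu(t)$; since that measure must be \emph{nonnegative}, $\mu$ charges no subset of $(-\infty,0)$, i.e.\ $\operatorname{supp}\mu\subseteq[0,\infty)$, and its integrability against $\tfrac{1}{1+t^{2}}$ together with $\operatorname{supp}\mu\subseteq[0,\infty)$ improves to $\int_{0}^{\infty}\frac{d\mu(t)}{1+t}<\infty$. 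Hence $f(z)=a+\int_{0}^{\infty}\frac{d\mu(t)}{t-z}$ for some real $a$; evaluating $\im g(iy)=y\bigl(a+\int_{0}^{\infty}\frac{t\,d\mu(t)}{t^{2}+y^{2}}\bigr)\ge 0$ and letting $y\to\infty$ yields $a\ge 0$. Checking (i)--(iii) from this representation gives $f\in\mathfrak{S}$.

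The main obstacle is the bookkeeping in the converse direction: reconciling the two Herglotz representations rigorously when $\mu$ is an infinite measure, so that the naive manipulation $\tfrac{z}{t-z}=\tfrac{t}{t-z}-1$ is not licit, and tracking the two distinct integrability conditions $\int\frac{d\mu}{1+t^{2}}<\infty$ versus $\int\frac{d\mu}{1+t}<\infty$. Equivalently, the substantive inputs are the Stieltjes--Perron inversion formula and the fact that the representing measure of a Nevanlinna function cannot have a negative atom or negative singular component --- this is precisely what pins $\operatorname{supp}\mu$ to $[0,\infty)$. A more self-contained alternative for the forward implication, avoiding the representation, is to apply the minimum principle on each quadrant of $\bb{H}_{+}$ to the harmonic function $\im\bigl(z^{2}f(z^{2})\bigr)$, whose boundary values on $\bb{R}$ are $\ge 0$ on the positive axis, $\le 0$ on the negative axis, and $0$ on the imaginary axis (all by (i)--(iii)); this needs a Phragm\'en--Lindel\"of argument to control growth at $0$ and $\infty$, for which one uses that any $f\in\mathfrak{S}$ is bounded at $\infty$ (else, being zero-free on $\bb{C}\setminus\bb{R}_{+}$, $1/f$ would extend across $\infty$, forcing $f\sim cz$ with $c>0$ and violating $f>0$ on $\bb{R}_{-}$) and that $zf(z)$ is bounded at $0$.
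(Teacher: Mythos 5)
Your proposal is correct and follows essentially the same route as the proof the paper relies on: the paper does not prove Theorem~\ref{th:SviaR} itself but defers to \cite[Chapter~III, Addendum]{akhi21} and \cite[Addendum, Section~2]{Krein:1977:MMP}, where the argument is precisely this passage through the Herglotz/Nevanlinna integral representation, Stieltjes--Perron inversion to pin the support of $\mu$ to $[0,\infty)$, and the identification of the representing measure of $z\mapsto zf(z)$ with $t\,d\mu(t)$. The one step you leave at sketch level---rigorously matching the two representations in the converse direction---is the same standard (if tedious) computation carried out in those references, so there is no substantive gap.
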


As a corollary we see that the Stieltjes class has an involutive symmetry 
\begin{equation}
  \label{invsym}
  f(z)\mapsto-\nth{zf(z)}.
\end{equation}
The second characterization of $\mathfrak{S}$ is more explicit.
\begin{theorem}[Stieltjes]
\label{th:Stieltjes}
  $f\in\mathfrak{S}$ \IFF there exists $\Gg\ge 0$ and a positive Radon measure $\Gs$
  on $[0,+\infty)$, such that
  \begin{equation}
    \label{Stielrep}
    f(z)=\Gg+\int_{0}^{\infty}\frac{d\Gs(t)}{t-z},\qquad
\int_{0}^{\infty}\frac{d\Gs(t)}{1+t}<+\infty.
  \end{equation}
\end{theorem}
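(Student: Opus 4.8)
The plan is to prove the two implications separately: the ``if'' direction by a direct verification of properties (i)--(iii), and the ``only if'' direction by combining Theorem~\ref{th:SviaR} with the classical Herglotz integral representation of Nevanlinna functions.

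\textbf{Sufficiency.} Suppose $f$ is given by \eqref{Stielrep} with $\Gg\ge 0$ and $\Gs$ a positive Radon measure on $[0,\infty)$ satisfying the stated integrability (the case $\Gs\equiv 0$ being the nonnegative‑constant one). On every compact set $K\subset\bb{C}\setminus\bb{R}_{+}$ one has $|t-z|\ge c(K)(1+t)$ for all $t\ge 0$ and $z\in K$, so $(t-z)^{-1}$ and each of its $z$‑derivatives are dominated on $K$ by a constant multiple of $(1+t)^{-1}$, which is $\Gs$‑integrable; differentiation under the integral sign then shows $f$ is analytic on $\bb{C}\setminus\bb{R}_{+}$. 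For $\im(z)>0$ one has $\im(f(z))=\int_{0}^{\infty}\im(z)\,|t-z|^{-2}\,d\Gs(t)>0$ unless $\Gs\equiv 0$, which gives (i); for $x<0$, $(t-x)^{-1}>0$ for every $t\ge 0$, so $f(x)\ge\Gg\ge 0$ with strict inequality when $\Gs\not\equiv 0$, which gives (ii); and $\overline{(t-z)^{-1}}=(t-\overline{z})^{-1}$ for real $t$ gives (iii).

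\textbf{Necessity.} Let $f\in\mathfrak{S}$ be nonconstant. By Theorem~\ref{th:SviaR}, both $f$ and $g(z):=zf(z)$ are Nevanlinna functions, so $f$ has a Herglotz representation
\begin{equation}
  \label{herglotz-f}
  f(z)=\Ga+\Gb z+\int_{\bb{R}}\Big(\frac{1}{t-z}-\frac{t}{1+t^{2}}\Big)\,d\Gm(t),\qquad
  \Ga\in\bb{R},\ \Gb\ge 0,\ \int_{\bb{R}}\frac{d\Gm(t)}{1+t^{2}}<\infty .
\end{equation}
Since $f$ is analytic on \emph{all} of $\bb{C}\setminus\bb{R}_{+}$ and, by (iii), real‑valued on $(-\infty,0)$, the Stieltjes--Perron inversion formula shows $\Gm$ puts no mass on $(-\infty,0)$, i.e. $\mathrm{supp}\,\Gm\subseteq[0,\infty)$. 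Moreover $\Gb=0$: if $\Gb>0$ then $g(z)=zf(z)=\Gb z^{2}+o(|z|^{2})$ nontangentially, and along the ray $\arg z=3\pi/4$ we have $z^{2}=-i|z|^{2}$, so $\im(g(z))\to-\infty$, contradicting that $g$ is a Nevanlinna function.

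\textbf{Finishing, and the main obstacle.} With $\Gb=0$ and $\mathrm{supp}\,\Gm\subseteq[0,\infty)$, setting $d\Gs(t):=d\Gm(t)$ and $\Gg:=\Ga-\int_{0}^{\infty}t(1+t^{2})^{-1}\,d\Gm(t)$ converts \eqref{herglotz-f} into $f(z)=\Gg+\int_{0}^{\infty}(t-z)^{-1}\,d\Gs(t)$, \emph{provided} that last integral is finite; and because $(1+t)^{-1}\le(1+t^{2})^{-1}+2t(1+t^{2})^{-1}$, this finiteness is equivalent to the required $\int_{0}^{\infty}(1+t)^{-1}\,d\Gs(t)<\infty$. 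Establishing it is the crux of the proof, and it is exactly where the hypothesis that $zf(z)$---not just $f$---is Nevanlinna is used: the boundary identity $\im(g(x+i\eps))=x\,\im(f(x+i\eps))+\eps\,\re(f(x+i\eps))$ together with the Stieltjes--Perron inversion formula identifies the Herglotz measure of $g$ restricted to $(0,\infty)$ with $t\,d\Gm(t)$, so that its Herglotz integrability $\int_{0}^{\infty}(1+t^{2})^{-1}\,t\,d\Gm(t)<\infty$ is inherited from $g$ being Nevanlinna. Finally, letting $x\to-\infty$ in the resulting formula (dominated convergence) gives $\Gg=\lim_{x\to-\infty}f(x)\ge 0$ by (ii); the nonnegative‑constant case corresponds to $\Gs\equiv 0$. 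The routine ingredients are the verifications in the sufficiency direction and the elementary identity $\frac{1}{t-z}-\frac{t}{1+t^{2}}=\frac{1+tz}{(t-z)(1+t^{2})}$ relating the two standard forms of the Herglotz representation; the real work is the joint use of the two Nevanlinna properties---first to localize $\Gm$ on $[0,\infty)$ and to eliminate the linear term, then to upgrade the decay of $\Gm$ from $(1+t^{2})^{-1}$‑integrability to $(1+t)^{-1}$‑integrability.
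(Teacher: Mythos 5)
Your proof is correct in its essentials, but note that the paper does not actually prove Theorem~\ref{th:Stieltjes}: it quotes it (together with Theorem~\ref{th:SviaR}) as classical and refers to Akhiezer and to Krein--Nudelman for the proofs. So there is no in-paper argument to match; what you have written is a self-contained route that leans on Theorem~\ref{th:SviaR} plus the Herglotz representation. That is legitimate within the paper's logical structure, and there is no circularity in the direction you use: you only need the implication $f\in\mathfrak{S}\Rightarrow f$ and $zf$ Nevanlinna (the ``easy'' direction of Theorem~\ref{th:SviaR}, provable by a harmonic-majorant/phase argument that does not pass through (\ref{Stielrep})), while the sufficiency half of Theorem~\ref{th:Stieltjes} you verify directly from the formula. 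Your sufficiency verification, the localization of the Herglotz measure $\Gm$ on $[0,+\infty)$ via Stieltjes--Perron inversion on $(-\infty,0)$, the elimination of the linear term $\Gb z$ by the nontangential asymptotics of $zf(z)$ along $\arg z=3\pi/4$, and the final passage $x\to-\infty$ giving $\Gg\ge 0$ are all sound.

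The one step you should expand is the identification of the Herglotz measure of $g(z)=zf(z)$ on $(0,\infty)$ with $t\,d\Gm(t)$, which you correctly single out as the crux. The inversion formula applied to $g$ produces, besides $\int_a^b x\,\im f(x+i\Ge)\,dx$, the term $\int_a^b \Ge\,\re f(x+i\Ge)\,dx$, and showing that this tends to $0$ is not completely free: you cannot split off $\int_0^\infty t(1+t^2)^{-1}d\Gm(t)$ (which may be infinite), so you must integrate the combined kernel in $x$ first, e.g.
\[
\Ge\int_a^b\Bigl(\frac{t-x}{(t-x)^2+\Ge^2}-\frac{t}{1+t^2}\Bigr)dx
=\frac{\Ge}{2}\ln\frac{(t-a)^2+\Ge^2}{(t-b)^2+\Ge^2}-\Ge(b-a)\frac{t}{1+t^2},
\]
and observe that the two contributions cancel to $O(\Ge/t^{2})$ for large $t$ and are $O(\Ge(1+|\ln\Ge|))$ for bounded $t$, so dominated convergence against $d\Gm$ applies. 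With that estimate (or a citation of the Kac--Krein lemma relating the measures of $f$ and $zf$), the upgrade from $(1+t^{2})^{-1}$-integrability to $(1+t)^{-1}$-integrability of $\Gm$, and hence the representation (\ref{Stielrep}), follows exactly as you say.
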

The proof of both theorems can be found in \cite[Chapter~III, Addendum]{akhi21} 
% Problems 3 and 4, p. 159-161
or in \cite[Addendum, Section~2]{Krein:1977:MMP}. %p. 522-525
We remark that given $f\in\mathfrak{S}$ we
have
\begin{equation}
  \label{recovery}
  \Gg=\lim_{z\to\infty}f(z),\qquad \Gs(x)=\nth{\pi}\lim_{y\to 0^{+}}\im f(x+iy),
\end{equation}
where the second limit above is understood in the sense of distributions.

Our goal is the recovery of a Stieltjes function $f$ from its approximately
known values 
$f(z_{1}),\ldots,f(z_{n})$ at distinct points
$\{z_{1},\ldots,z_{n}\}\subset\bb{H}_{+}$. In this regard
we recall a well-known Nevanlinna-Pick theorem that, combined with
Theorem~\ref{th:SviaR}, gives a criterion for
$\Bw\in\bb{C}^{n}$ to lie in the interpolation body $V(\Bz)$, given by (\ref{Vofzdef}).
\begin{theorem}[Nevanlinna-Pick]
\label{th:NP}
Let $\{z_{1},\ldots,z_{n}\}\subset\bb{H}_{+}$ be all distinct and
$\Bw\in\bb{C}^{n}$. Then $\Bw\in V(\Bz)$ \IFF the Nevanlinna-Pick matrices
$\BN(\Bz,\Bw)$ and $\BP(\Bz,\Bw)$ are nonnegative definite, where
  \begin{equation}
    \label{NP}
    N_{jk}(\Bz,\Bw)=\frac{w_{j}-\bra{w_{k}}}{z_{j}-\bra{z_{k}}},\qquad
P_{jk}(\Bz,\Bw)=\frac{z_{j}w_{j}-\bra{z_{k}w_{k}}}{z_{j}-\bra{z_{k}}}.
  \end{equation}
  Moreover, if $\Bw\in\Md V(\Bz)$, so that either $\rank(\BN(\Bz,\Bw))<n$ or
  $\rank(\BP(\Bz,\Bw))<n$, then there is a unique rational function
  $f\in\mathfrak{S}$, such that $w_{j}=f(z_{j})$, $j=1,\ldots,n$.
\end{theorem}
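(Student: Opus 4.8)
The plan is to reduce the theorem about $\mathfrak{S}$ to the classical Nevanlinna--Pick theorem for Nevanlinna functions on $\bb{H}_{+}$, via the characterization in Theorem~\ref{th:SviaR}. Recall that the classical Nevanlinna--Pick theorem states: given distinct $z_{1},\ldots,z_{n}\in\bb{H}_{+}$ and $w_{1},\ldots,w_{n}\in\bb{C}$, there exists a Nevanlinna function $g$ with $g(z_{j})=w_{j}$ for all $j$ if and only if the matrix with entries $(w_{j}-\bra{w_{k}})/(z_{j}-\bra{z_{k}})$ is nonnegative definite; moreover when this matrix is singular the interpolant is unique and rational, of degree equal to its rank. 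First I would record this as the input fact (it is standard and is, in spirit, ``stated earlier'' through the surrounding discussion; one may cite \cite{akhi21} or \cite{Krein:1977:MMP}).

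The forward direction is immediate: if $\Bw\in V(\Bz)$ then $\Bw=(f(z_{1}),\ldots,f(z_{n}))$ for some $f\in\mathfrak{S}$; by Theorem~\ref{th:SviaR} both $f$ and $z\mapsto zf(z)$ are Nevanlinna functions, so applying the classical Pick criterion to each gives $\BN(\Bz,\Bw)\ge 0$ and $\BP(\Bz,\Bw)\ge 0$ (note $P_{jk}$ is exactly the Pick matrix for the data $z_{j}\mapsto z_{j}w_{j}$). For the converse, suppose $\BN(\Bz,\Bw)\ge 0$ and $\BP(\Bz,\Bw)\ge 0$. By the classical theorem there is a Nevanlinna function $f$ with $f(z_{j})=w_{j}$ and a Nevanlinna function $h$ with $h(z_{j})=z_{j}w_{j}$. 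The key step is to show one may take $h(z)=zf(z)$, i.e. that the two interpolants are forced to be compatible; once that is done Theorem~\ref{th:SviaR} yields $f\in\mathfrak{S}$ and hence $\Bw\in V(\Bz)$. The cleanest route is to invoke the structure of the Nevanlinna--Pick problem directly for the pair of conditions: the set of Nevanlinna functions interpolating $z_{j}\mapsto w_{j}$ is parametrized (by the Nevanlinna algorithm / linear fractional transformation of the free Nevanlinna parameter), and among them one must exhibit one for which $zf(z)$ is again Nevanlinna. Equivalently, I would appeal to the known solvability theory for the \emph{Stieltjes moment / interpolation problem} (Krein--Nudelman), which establishes precisely that simultaneous nonnegativity of $\BN$ and $\BP$ is necessary and sufficient --- this is the substance one is allowed to cite, and the theorem as stated is its interpolation-theoretic packaging.

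For the final clause (uniqueness and rationality on the boundary $\Md V(\Bz)$): if $\rank(\BN(\Bz,\Bw))<n$, the classical Pick uniqueness theorem already forces $f$ itself to be the unique Nevanlinna interpolant and to be rational of degree $r=\rank(\BN)<n$; it then remains only to check this $f$ is actually Stieltjes, which follows because $\BP\ge 0$ holds and the rational $f$ of low degree has all its poles and the sign structure dictated by $\BN,\BP$ being on the boundary --- concretely, $f$ must then map $\bb{C}\setminus\bb{R}_{+}$ analytically with the required positivity, so $f\in\mathfrak{S}$. The case $\rank(\BP(\Bz,\Bw))<n$ is handled symmetrically using the involutive symmetry (\ref{invsym}): $f\mapsto -1/(zf(z))$ sends $\BP$ to (a congruent copy of) $\BN$, so rank-deficiency of $\BP$ becomes rank-deficiency of the $\BN$-matrix for the transformed data, reducing to the previous case.

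\medskip
\noindent\textbf{Main obstacle.} The genuinely nontrivial point is the converse compatibility step: passing from ``$\BN\ge 0$ and $\BP\ge 0$, hence $f$ and $h$ exist separately'' to ``a single $f\in\mathfrak{S}$ works.'' Proving this from scratch would require running the Nevanlinna parametrization and tracking the extra constraint that $zf(z)$ stay Nevanlinna --- i.e. essentially re-deriving the Krein--Nudelman solvability criterion for the Stieltjes interpolation problem. In a paper of this type the right move is to cite that criterion (from \cite{Krein:1977:MMP} or \cite{akhi21}) rather than reprove it, and to present Theorem~\ref{th:NP} as the clean corollary obtained by combining it with Theorem~\ref{th:SviaR}. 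The rest (forward direction, boundary uniqueness via classical Pick, the $\BP$-case via (\ref{invsym})) is routine.
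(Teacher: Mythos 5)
Your proposal takes essentially the same route as the paper: the paper offers no proof of Theorem~\ref{th:NP}, deferring entirely to \cite[Ch.~16--18]{simon19} and \cite{krnu98}, which is exactly what you do for the only nontrivial step (the simultaneous-solvability criterion for the Stieltjes interpolation problem), with the forward direction via Theorem~\ref{th:SviaR} and the boundary clause handled by classical degenerate-Pick uniqueness. One small tightening of your boundary argument: since the hypothesis $\Bw\in\Md V(\Bz)$ already places $\Bw$ in $V(\Bz)$, a Stieltjes interpolant exists and, by uniqueness of the Nevanlinna interpolant for the degenerate data ($w_j$ when $\rank\BN<n$, or $z_jw_j$ when $\rank\BP<n$), must coincide with that unique rational solution, so the vaguer ``sign structure'' step and the separate reduction via (\ref{invsym}) are unnecessary.
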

For the proof see e.g., \cite[Ch.~16--18]{simon19} (see also \cite{krnu98}).

\subsection{Bounds on Stieltjes function values}
\label{sub:Sbds}
The question we want to address now is about the freedom one has for the value
$w=f(z)$, provided $f\in\mathfrak{S}$ and satisfies $f(z_{j})=w_{j}$,
$j=1,\ldots,n$. This freedom is represented by the admissible
set of values
\begin{equation}
  \label{Admreg}
  \CA(z;\Bz,\Bw)=\{f(z):f\in\mathfrak{S},\ f(z_{j})=w_{j},\ j=1,\ldots,n\}.
\end{equation}
Such admissible sets are well-understood and widely used in the context of
effective properties of composite materials
\cite{gopa83,gopa85,gold95a,mmdgbg96,chgo98}. Our analysis is inspired by the
one in \cite{mi81} and reaches somewhat similar conclusions. However, it is
based on Theorem~\ref{th:NP} rather than the explicit representation of
Stieltjes functions from Theorem~\ref{th:Stieltjes}, used in prior work.
The question of bounds on values of Stieltjes functions in the case when the
spectral measure $\Gs$ is known in an interval of frequencies is addressed in
\cite{mem97}. The bounds in the case when the phase of the analytic function
is known on a part of the boundary, and on the modulus on the remaining part
have been derived in \cite{acd20} by means of a modified Nevanlinna-Pick problem.

Let us assume that the data $\Bw$ lies in the interior of $V(\Bz)$. By
Theorem~\ref{th:NP} the matrices $\BN(\Bz,\Bw)$ and $\BP(\Bz,\Bw)$, given by
(\ref{NP}), are positive definite. Then, by Sylvester's criterion
(e.g., \cite{hojo85}) we obtain
that the $\BN([\Bz,z],[\Bw,w])$ and $\BP([\Bz,z],[\Bw,w])$ matrices
corresponding to the extended data $([\Bz,z],[\Bw,w])$ are positive definite
\IFF
\begin{equation}
  \label{detineq}
  \det\BN([\Bz,z],[\Bw,w])>0,\qquad\det\BP([\Bz,z],[\Bw,w])>0.
\end{equation}
We can make inequalities (\ref{detineq}) explicit, since the determinants above are
quadratic functions of $w$. 
%Using the Schur complement determinant formula 
Expanding the determinants with respect to the last
column and the last row, so that $w$ enters explicitly, 
we obtain
\[
\det\BN([\Bz,z],[\Bw,w])=\frac{\im(w)}{\im(z)}\det\BN(\Bz,\Bw)-
\Ga|w|^{2}+2\re(aw)-\Gb,
\]
where
\[
\Ga=(\cof(\BN)\BGx(z),\BGx(z)),\qquad a=(\cof(\BN)\BGx(z),\BGn(z)),\qquad\Gb=(\cof(\BN)\BGn(z),\BGn(z)),
\]
and $\BN$ stands for $\BN(\Bz,\Bw)$, and
\[
\xi_{k}(z)=\frac{1}{z-\bra{z_{k}}},\qquad\eta_{k}(z)=\frac{\bra{w_{k}}}{z-\bra{z_{k}}},
\qquad k=1,\ldots,n.
\]
We conclude that $\det\BN([\Bz,z],[\Bw,w])>0$ \IFF $|w-w^{(N)}(z)|< r_{N}(z)$,
where
\begin{equation}
  \label{wrN}
  w^{(N)}(z)=\frac{\bra{a}}{\Ga}+i\frac{\det\BN(\Bz,\Bw)}{2\Ga\im(z)},\qquad
r_{N}(z)^{2}=|w^{(N)}(z)|^{2}-\frac{\Gb}{\Ga}.
\end{equation}
A similar analysis for the $\BP$-matrix gives $|w-w^{(P)}(z)|<r_{P}(z)$, where
\begin{equation}
  \label{wrP}
  w^{(P)}(z)=\frac{\bra{a'}}{\Ga'}+i\frac{\bra{z}\det\BP(\Bz,\Bw)}{2\Ga'\im(z)},\qquad
r_{P}(z)^{2}=|w^{(P)}(z)|^{2}-\frac{\Gb'}{\Ga'},
\end{equation}
and
\[
\Ga'=(\cof(\BP)\BGx'(z),\BGx'(z)),\quad a'=(\cof(\BP)\BGx'(z),\BGn'(z)),\quad
\Gb'=(\cof(\BP)\BGn'(z),\BGn'(z)),
\]
\[
\BGx'(z)=z\BGx(z),\qquad\BGn'(z)=z\BGn(z)-\bra{\Bw},\qquad\BP=\BP(\Bz,\Bw)
\]

Let us now estimate $r_{N}(z)$. (The estimate for $r_{P}(z)$ would be fully
analogous.) The key observation is the inequality between $\Ga$, $\Gb$ and $a$:
$|a|^{2}\le\Ga\Gb$. Then
\[
r_{N}(z)^{2}=\frac{|a|^{2}-\Ga\Gb}{\Ga^{2}}+\rho^{2}
-\frac{2\im(a)\rho}{\Ga}\le 2\rho\left(\rho-\frac{\im(a)}{\Ga}\right)=
2\rho\im(w^{(N)}(z)),
\]
where
\[
  \rho=\frac{\det\BN(\Bz,\Bw)}{2\Ga\im(z)}.
\]
Thus, we have obtained the estimate
\begin{equation}
  \label{Nrest}
  r_{N}(z)^{2}\le\frac{\im(w^{(N)}(z))}{\im(z)}(\BN(\Bz,\Bw)^{-T}\BGx(z),\BGx(z))^{-1}.
\end{equation}
A similar calculation for the $\BP$ matrix gives the estimate
\begin{equation}
  \label{PRest}
  r_{P}(z)^{2}\le\frac{\im(zw^{(P)}(z))}{\im(z)}(\BP(\Bz,\Bw)^{-T}\BGx'(z),\BGx'(z))^{-1}.
\end{equation}
The main feature of matrices $\BN(\Bz,\Bw)$ and $\BP(\Bz,\Bw)$ is the exponential decay of their
eigenvalues due to their rank two displacement structure \cite{beto17}:
\begin{equation}
  \label{ND}
  \BD(\Bz)\BN(\Bz,\Bw)-\BN(\Bz,\Bw)\BD(\Bz)^{*}=\Bw\otimes\Bone-\Bone\otimes\bra{\Bw},
\end{equation}
\begin{equation}
  \label{PD}
\BD(\Bz)\BP(\Bz,\Bw)-\BP(\Bz,\Bw)\BD(\Bz)^{*}=\BD(\Bz)\Bw\otimes\Bone-\Bone\otimes\BD(\bra{\Bz})\bra{\Bw},
\end{equation}
where $\BD(\Bz)$ is a diagonal matrix with numbers $z_{j}$ on the main
diagonal and $\Bone$ is a vector of ones.

If the vector $\BGx(z)$ has a substantial projection onto the space spanned by
the eigenvectors of $\BN(\Bz,\Bw)$ and $\BP(\Bz,\Bw)$ with exponentially small
eigenvalues, then $(\BN(\Bz,\Bw)^{-T}\BGx(z),\BGx(z))$ and
$(\BP(\Bz,\Bw)^{-T}\BGx'(z),\BGx'(z))$ will be exponentially large (as functions
of $n$). This shows that $r_{N}(z)$ and $r_{P}(z)$ can easily become
exponentially small even for relatively small values of $n$. In fact,
$r_{N}(z)=0$ or $r_{P}(z)=0$ (or both) whenever $\Bw\in\Md V(\Bz)$. This may
lead one to think that fixing more than 15--20 values of a Stieltjes function
determines it for all practical intents and purposes. The truth is more
nuanced. It depends very strongly on the relative location of $z$ and $z_{j}$
and on the exact location of $\Bw\in V(\Bz)$ relative to $\Md V(\Bz)$. 
Formally, $V(\Bz)$ is a closed convex cone in $\bb{C}^{n}$ with non-empty
interior. In practice, its geometry resembles that of a thin knife blade,
rather than a party hat, so that very small random perturbations of points in
the interior of $V(\Bz)$ will throw them outside. In other words, no matter
where the point $\Bw$ is in $V(\Bz)$, it is never far from $\Md V(\Bz)$,
where, as we have just observed, the region of admissible values
$\CA(z;\Bz,\Bw)$ degenerates to a point. What is somewhat counter-intuitive is
that for points $\Bw$ in the interior of $V(\Bz)$ the set $\CA(z;\Bz,\Bw)$ can
be rather large, depending on the location of $z$ relative to points $z_{j}$.
\begin{figure}[t]
  \centering
  \includegraphics[scale=0.3]{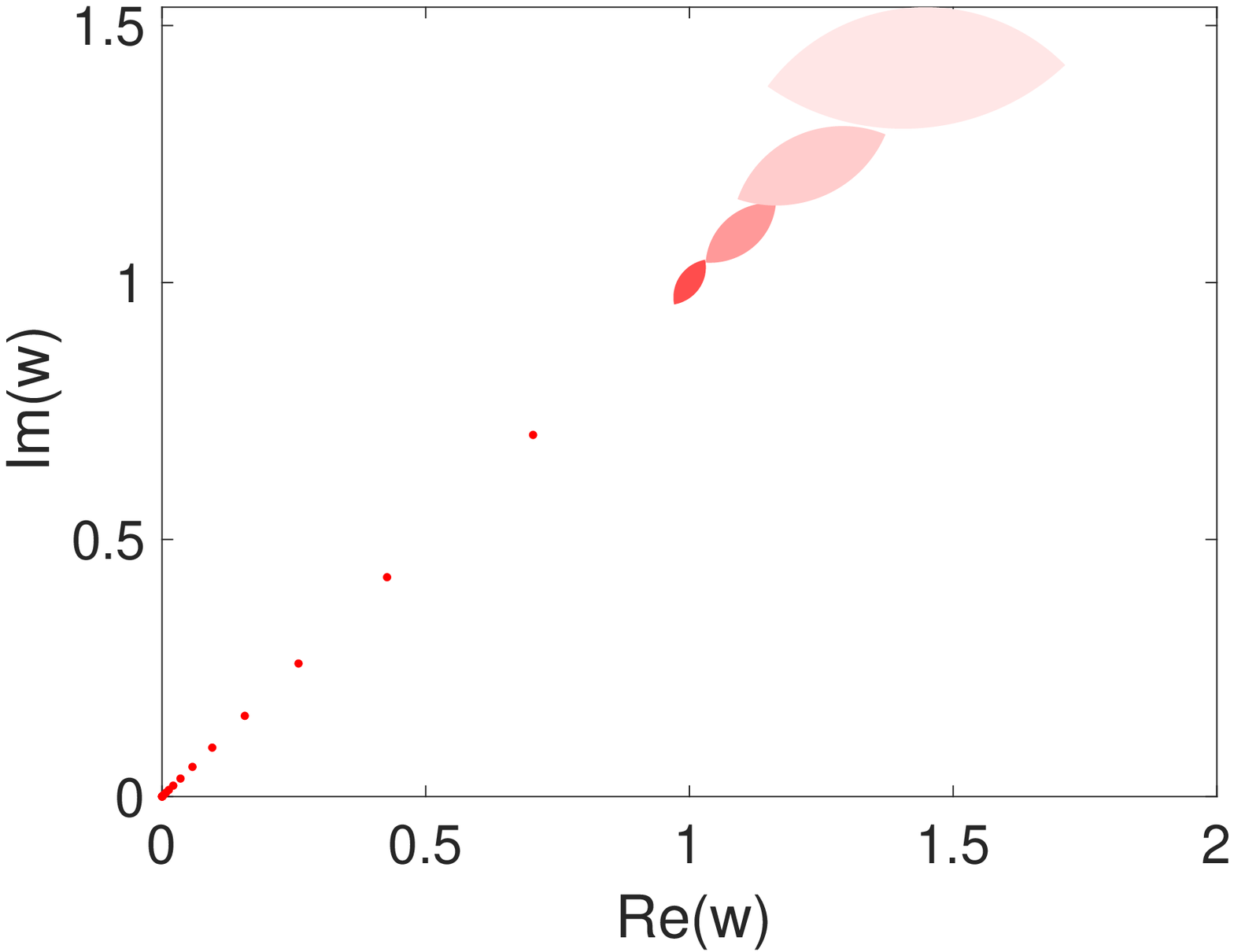}~~~~~~~~~
\includegraphics[scale=0.3]{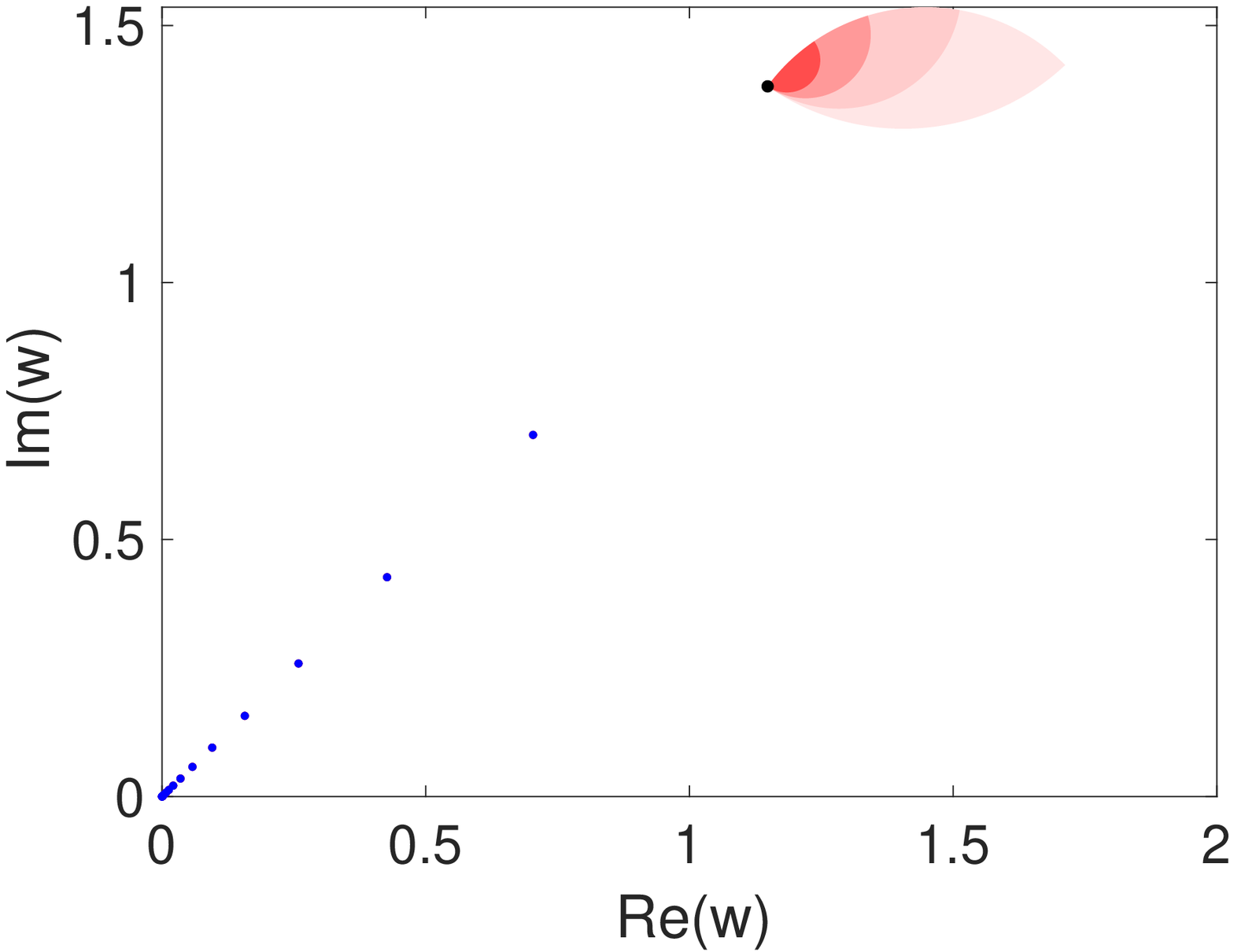}
% Made by rsch/kramers-kronig/Stieltjes/MPStieltjes/MPUQ0.m
  \caption{Dependence of the admissible set $\CA(z;\Bz,\Bw)$ on the location
    of $z$, relative to the data $z_j$ (left) and on the location of $\Bw$,
    relative to $\Md V(\Bz)$ (right).}
  \label{fig:lenses}
\end{figure}
The left panel of Figure~\ref{fig:lenses} illustrates this effect in the simple example
\begin{equation}
  \label{sqrtexample0}
  z_{j}=ie^{0.01+j},\quad w_{j}=f(z_{j}),\ j=0,1,\ldots,19,\quad f(z)=\nth{\sqrt{-z}}.
\end{equation}
We see how the shaded lens-shaped regions grow in size as the point $z$,
taking values $i/2$, $i/2.4$, $i/3$, and $i/4$ moves ``away'' from the data
$z_{j}$, given in (\ref{sqrtexample0}). Our discussion also shows that if we
move $\Bw$ from the interior of $V(\Bz)$ to its boundary the admissible set
will shrink to a point. The right panel of Figure~\ref{fig:lenses} illustrates
this effect when we move from $\Bw$, given in (\ref{sqrtexample0}), which lies
in the interior of $V(\Bz)$, to $\Md V(\Bz)$ along any random direction $\Bu$,
which we have chosen (arbitrarily) to have all components equal to $-1$. The
corresponding point $\Tld{\Bw}\in\Md V(\Bz)$ satisfies $
|\Bw-\Tld{\Bw}|/|\Bw|<10^{-4}, $ as we have verified numerically. In the right
panel of Figure~\ref{fig:lenses} we plotted the original points $w_{j}$ in red
and the perturbed points $\Tld{w}_{j}$ in blue, except one cannot see a
difference between them in the figure. The set $\CA(z;\Bz,\Tld{\Bw})$
degenerates to a point shown in black, while the the sets
$\CA_{t}=\CA(z;\Bz,t\Tld{\Bw}+(1-t)\Bw)$ for three intermediate values of $t$
are shown by progressively darker shading. The values we have chosen are
$t_{1}=1-2\cdot10^{-5}$, $t_{2}=1-7\cdot10^{-6}$, and
$t_{3}=1-3\cdot10^{-6}$. This indicates that if we move uniformly from $\Bw\in
V(\Bz)$ to $\Tld{\Bw}\in\Md V(\Bz)$, the admissible sets $\CA_{t}$ remain
virtually unchanged until we get very close to $\Md V(\Bz)$. The admissible
set then collapses rather abruptly to a point corresponding to
$\Tld{\Bw}\in\Md V(\Bz)$. This complicated, almost discontinuous behavior
occurs as we move from $\Bw$ to $\Tld{\Bw}$, which can barely be
distinguished in right panel of Figure~\ref{fig:lenses}.

The computations needed to make Figure~\ref{fig:lenses} have been done with the
Advanpix Multiprecision Computing Toolbox for MATLAB (www.advanpix.com) using
100 digits of precision.

\subsection{Interpolation}
\label{sub:intalg}
Let us assume now that the data $(\Bz,\Bw)\in\bb{C}^{2n}$ satisfies conditions
of Theorem~\ref{th:NP}, i.e., $\Bw\in V(\Bz)$. Our goal is to construct an interpolant $f\in\mathfrak{S}$,
such that $f(z_{j})=w_{j}$ for all $j=1,\ldots,n$. We begin with the case
$n=1$. According to  Theorem~\ref{th:NP}, the necessary and sufficient
condition for existence of such a function is $\im(w_{1})\ge 0$ and
$\im(z_{1}w_{1})\ge 0$. Of course, if $\im(w_{1})=0$, then $w_{1}\ge 0$,
according to the second inequality, and $f(z)=w_{1}$ for all $z$. If
$\im(z_{1}w_{1})=0$, then $zf(z)$ must be a real constant, and hence,
according to the first inequality, $f=-\Gs/z$, where $\Gs=-z_{1}w_{1}\ge
0$. Let us now assume that 
\begin{equation}
  \label{strictNP1}
\im(w_{1})>0,\qquad\im(z_{1}w_{1})>0,
\end{equation}
and characterize the set 
\[
\mathfrak{S}(z_{1},w_{1})=\{f\in\mathfrak{S}:f(z_{1})=w_{1}\}.
\]
We look for the answer in the same form as in the case of polynomials $\CP$,
where the set $\CP(z_{1},w_{1})$ of all polynomials $p\in\CP$ satisfying
$p(z_{1})=w_{1}$ can be described as
\[
\CP(z_{1},w_{1})=\{p(z)=(z-z_{1})q(z)+w_{1}: q\in\CP\}.
\] 
Moreover, distinct polynomials $q\in\CP$ correspond to distinct polynomials
$p\in\CP(z_{1},w_{1})$. By analogy with polynomials, we want to parametrize
the set $\mathfrak{S}(z_{1},w_{1})$ by elements of $\mathfrak{S}$ in the same
fashion as $\CP(z_{1},w_{1})$ is parametrized by elements of $\CP$. Of course,
we expect that the parametrization will be more complicated than in the case
of polynomials. The desired parametrization has already been found in
\cite{krnu98}, but the derivation here is not a routine calculation, differing
from the one in \cite{krnu98}.

According to Theorem~\ref{th:NP} the set of all admissible values $f(z)$ for
$f\in\mathfrak{S}(z_{1},w_{1})$ is described by the inequalities
\begin{equation}
  \label{DN}
      \det\BN([z_{1},z],[w_{1},f(z)])=
\dfrac{\im(w_{1})}{\im(z_{1})}\dfrac{\im f(z)}{\im(z)}
-\left|\dfrac{f(z)-\bra{w_{1}}}{z-\bra{z_{1}}}\right|^{2}\ge 0,
\end{equation}
\begin{equation}
  \label{DP}
\det\BP([z_{1},z],[w_{1},f(z)])=
\dfrac{\im(z_{1}w_{1})}{\im(z_{1})}\dfrac{\im (zf(z))}{\im(z)}
-\left|\dfrac{zf(z)-\bra{z_{1}w_{1}}}{z-\bra{z_{1}}}\right|^{2}\ge 0.
\end{equation}
Inequalities (\ref{DN}), (\ref{DP}) place $f(z)$ inside closed disks
$D_{N}(z_{1},w_{1},z)$ and $D_{P}(z_{1},w_{1},z)$, respectively.  At the same
time, Theorem~\ref{th:SviaR} says that $f\in\mathfrak{S}$ \IFF $f(z)$ lies in the
intersection of two closed half-planes
$\bra{\bb{H}}_{+}=\{w\in\bb{C}:\im(w)\ge 0\}$ and
$\bb{H}_{z}=\{w\in\bb{C}:\im(zw)\ge 0\}$, for every $z\in\bb{H}_{+}$. This
gives the idea of the desired parametrization of $\mathfrak{S}(z_{1},w_{1})$
by elements of $\mathfrak{S}$. This idea is at the core of the so-called V. Potapov's
method of ``fundamental matrix inequalities'' \cite{bosa99}. It has been implemented for
interpolation of matrix-valued Stieltjes functions in \cite{dyuka86}. We
present the argument and calculation both for the sake of completeness and
because the formulas here are used in our algorithm.

For every $z\in\bb{H}_{+}$ there
exists\footnote{Unique modulo $w\mapsto\Ga w$, $\Ga>0$ and $w\mapsto
  -1/(zw)$.} a fractional-linear transformation
\[
T_{z_{1},w_{1},z}(w)=\frac{L_{11}(z)w+L_{12}(z)}{L_{21}(z)w+L_{22}(z)}
\]
that maps $\CA(z;z_{1},w_{1})=D_{N}(z_{1},w_{1},z)\cap D_{P}(z_{1},w_{1},z)$
bijectively onto $\bra{\bb{H}}_{+}\cap\bb{H}_{z}$. In order to derive the
formula for $T_{z_{1},w_{1},z}(w)$ we exploit the simplicity of Stieltjes
functions corresponding to the points on the boundary of the admissible
regions $\CA(z;z_{1},w_{1})$ and $\bra{\bb{H}}_{+}\cap\bb{H}_{z}$. The idea is
that while the set of functions in $\mathfrak{S}(z_{1},w_{1})$ is very large,
if (\ref{strictNP1}) is satisfied, it degenerates to a single point if any of
the inequalities in (\ref{strictNP1}) become equalities, as we have already
discussed. The same holds for inequalities in (\ref{DN}), (\ref{DP}). If we
have equality in (\ref{DN}), then there exist a nonzero vector
$\BGx=(\xi_{1},\xi_{2})\in\ker\BN([z_{1},z_{2}],[f(z_{1}),f(z_{2})])$, where
for convenience of notation we replaced $z$ with $z_{2}$. Using representation
(\ref{Stielrep}), we compute
\[
\frac{f(z_{j})-\bra{f(z_{k})}}{z_{j}-\bra{z_{k}}}=\int_{0}^{\infty}\frac{d\Gs(t)}{(t-z_{j})(t-\bra{z_{k}})},\quad j,k=1,2.
\]
Thus,
\[
0=(\BN([z_{1},z_{2}],[f(z_{1}),f(z_{2})])\BGx,\BGx)_{\bb{C}^{2}}=\int_{0}^{\infty}\left|\frac{\xi_{1}}{t-z_{1}}+
\frac{\xi_{2}}{t-z_{2}}\right|^{2}d\Gs(t).
\]
This means that there is a non-zero vector $(\xi_{1},\xi_{2})\in\bb{C}^{2}$,
such that the function
\[
\phi(t)=\frac{\xi_{1}}{t-z_{1}}+\frac{\xi_{2}}{t-z_{2}}
\]
is identically zero on the support of $\Gs$. Since $z_{1}\not=z_{2}$ we
conclude that the support of $\Gs$ must be a single point, and the
corresponding Stieltjes function must have the form
\begin{equation}
  \label{dDN}
  f(z)=\Gg+\frac{\Gs}{t-z}.
\end{equation}
 Conversely, if the
spectral measure of $f\in\mathfrak{S}(w_{1},z_{1})$ is supported on a single point, then
we have equality in (\ref{DN}) for any $z\in\bb{H}_{+}$.

A similar analysis can be done for the case of equality in (\ref{DP}):
\[
0=(\BP([z_{1},z_{2}],[f(z_{1}),f(z_{2})])\BGx,\BGx)_{\bb{C}^{2}}=\Gg|\xi_{1}+\xi_{2}|^{2}
+\int_{0}^{\infty}\left|\frac{\xi_{1}}{t-z_{1}}+\frac{\xi_{2}}{t-z_{k}}\right|^{2}td\Gs(t).
\]
This equality implies that $f(z)$ must have either of two forms
\begin{equation}
  \label{dDP}
  f(z)=\Gg-\frac{\Gs_{0}}{z},\text{ or }f(z)=-\frac{\Gs_{0}}{z}+\frac{\Gs_{1}}{t_{1}-z}.
\end{equation}
We can regard the first form of $f(z)$ as a limit of the second one when
$\Gs_{1}=\Gg t_{1}$, as $t_{1}\to+\infty$. 

Now, since the fractional-linear transformation $T_{z_{1},w_{1},z}(w)$ maps the boundary of
$\CA(z;z_{1},w_{1})$ onto the boundary of $\bb{H}_{+}\cap\bb{H}_{z}$, the set
\[
S_{N}(z_{1},w_{1})=\{f\in\mathfrak{S}(z_{1},w_{1}):\det\BN([z_{1},z_{2}],[f(z_{1}),f(z_{2})])=0\},
\]
consisting of functions (\ref{dDN})
must be mapped by $T_{z_{1},w_{1},z}$ onto the set $\{g\in\mathfrak{S}:\im(g(z))=0\}$,
while the set
\[
S_{P}(z_{1},w_{1})=\{f\in\mathfrak{S}(z_{1},w_{1}):\det\BP([z_{1},z_{2}],[f(z_{1}),f(z_{2})])=0\},
\] 
consisting of functions (\ref{dDP}) must be mapped by $T_{z_{1},w_{1},z}$ onto the set
$\{g\in\mathfrak{S}:\im(zg(z))=0\}$. This gives us the desired equations. If we write
$g(z)=T_{z_{1},w_{1},z}(f(z))$, then
\begin{equation}
  \label{FviaM}
  f(z)=\frac{L_{22}(z)g(z)-L_{12}(z)}{L_{11}(z)-g(z)L_{21}(z)}.
\end{equation}
Hence, the coefficients $L_{ij}(z)$ must satisfy the following properties: for
any $\mu\ge 0$ the function $g(z)=\mu$ must be mapped into an element of
$S_{N}(z_{1},w_{1})$, i.e., function of the form (\ref{dDN}), while for any
$\nu\ge 0$ the function $g(z)=-\nu/z$ must be mapped to an element of
$S_{P}(z_{1},w_{1})$, i.e., function of the form (\ref{dDP}). We therefore
obtain the following system of equations for the unknown coefficients $L_{ij}(z)$:
\begin{equation}
  \label{NPsystem}
  \begin{cases}
    \dfrac{L_{22}(z)\mu-L_{12}(z)}{L_{11}(z)-\mu L_{21}(z)}=
\Gg(\mu)+\frac{\Gs(\mu)}{t(\mu)-z},\\[3ex]
    -\dfrac{L_{22}(z)\nu+zL_{12}(z)}{zL_{11}(z)+\nu L_{21}(z)}=
    -\dfrac{\Gs_{0}(\nu)}{z}+\dfrac{\Gs_{1}(\nu)}{t_{1}(\nu)-z},\\[3ex]
\dfrac{L_{22}(z_{1})\mu-L_{12}(z_{1})}{L_{11}(z_{1})-\mu L_{21}(z_{1})}=w_{1},\\[3ex]
-\dfrac{L_{22}(z_{1})\nu+z_{1}L_{12}(z_{1})}{z_{1}L_{11}(z_{1})+\nu L_{21}(z_{1})}=w_{1}.
  \end{cases}
\end{equation}
The last two equations are easy to solve, since the coefficients $L_{ij}(z)$
do not depend neither on $\mu$ nor on $\nu$. Thus, we must require that
\begin{equation}
  \label{Lpmdef}
  \begin{cases}
  L_{11}(z_{1})w_{1}+L_{12}(z_{1})=0,\\
  L_{21}(z_{1})w_{1}+L_{22}(z_{1})=0.
  \end{cases}
\end{equation}
In order to solve the other two equations we first observe that equations
\begin{equation}
  \label{constr}
  \begin{cases}
      \phi_{N}(z_{1})=\Gg+\dfrac{\Gs}{t-z_{1}}=w_{1},\\[2ex]
      \phi_{P}(z_{1})=-\dfrac{\Gs_{0}}{z_{1}}+\dfrac{\Gs_{1}}{t_{1}-z_{1}}=w_{1}
  \end{cases}
\end{equation}
determine two 1-parameter families of solutions $\phi_{N}(z;t)$ and
$\phi_{P}(z;t_{1})$, tracing the boundaries of $D_{N}$ and $D_{P}$,
respectively. Explicitly, we find
\begin{equation}
  \label{paramvals}
  \begin{cases}
\Gs=\dfrac{\im(w_{1})}{\im(z_{1})}|t-z_{1}|^{2},\\[3ex]
\Gg=\dfrac{\im(z_{1}w_{1})}{\im(z_{1})}-t\dfrac{\im(w_{1})}{\im(z_{1})},\\[3ex]
 \Gs_{0}=\left(\dfrac{\im(w_{1})}{\im(z_{1})}-
\nth{t_{1}}\dfrac{\im(z_{1}w_{1})}{\im(z_{1})}\right)|z_{1}|^{2},\\[3ex]
\Gs_{1}=\dfrac{|t_{1}-z_{1}|^{2}}{t_{1}}\dfrac{\im(z_{1}w_{1})}{\im(z_{1})}.
  \end{cases}
\end{equation}
This shows that for functions $\phi_{N}(z)$ and $\phi_{P}(z)$ to be in $\mathfrak{S}$
it is necessary and sufficient that $t\in[0,t_{*}]$ and
$t_{1}\in[t_{*},\infty]$, where
\[
t_{*}=\frac{\im(z_{1}w_{1})}{\im(w_{1})}.
\]
We then see that when $t=t_{1}=t_{*}$ we have
\begin{equation}
  \label{EE}
  \phi_{N}(z)=\phi_{P}(z)=\frac{\Gs_{*}}{t_{*}-z},\quad
\Gs_{*}=\dfrac{\im(w_{1})}{\im(z_{1})}|t_{*}-z_{1}|^{2}=\frac{|w_{1}|^{2}\im(z_{1})}{\im(w_{1})},
\end{equation}
while when $t=0$ and $t_{1}=\infty$
\begin{equation}
  \label{NEE}
  \phi_{N}(z)=\phi_{P}(z)=\Gg_{*}-\frac{\Gs^{*}}{z},\quad
\Gg_{*}=\dfrac{\im(z_{1}w_{1})}{\im(z_{1})},\quad
\Gs^{*}=\frac{|z_{1}|^{2}\im(w_{1})}{\im(z_{1})}.
\end{equation}
The correspondence between the two points of intersection of $\Md D_{N}$ and
$\Md D_{P}$ and the two points of intersection of $\Md\bb{H}_{+}$ and
$\Md\bb{H}_{z}$, characterized by $\mu=\nu=0$ and $\mu=\nu=\infty$,
respectively, is determined unambiguously by the orientation-preserving
property of fractional-linear transformations. We conclude that the
point $t=t_{1}=t_{*}$ corresponds to $\mu=\nu=0$, while the point
$\mu=\nu=\infty$ corresponds to $t=0$, $t_{1}=\infty$. Hence, we have the
equations
\[
-\dfrac{L_{12}(z)}{L_{11}(z)}=\frac{\Gs_{*}}{t_{*}-z},\qquad
-\dfrac{L_{22}(z)}{L_{21}(z)}=\Gg_{*}-\frac{\Gs^{*}}{z},
\]
that permit us to eliminate $L_{11}$ and $L_{22}$.
Denoting
$\Psi(z)=L_{21}(z)/L_{12}(z)$,
we obtain from the first equation in (\ref{NPsystem})
\[
  \frac{\left(\frac{\Gs^{*}}{z}-\Gg_{*}\right)\mu\Psi(z)-1}
{\frac{(z-t_{*})}{\Gs_{*}}-\mu\Psi(z)}=
\Gg(t(\mu))+\frac{\Gs(t(\mu))}{t(\mu)-z},\quad
\Gg(t)=\frac{\Gg_{*}}{t_{*}}(t_{*}-t),\quad
\Gs(t)=\frac{\Gg_{*}}{t_{*}}|t-z_{1}|^{2}
\]
Solving this equation for $\Psi$ (on Maple) %see Milton_extremals2.mw
we obtain that $\Psi(z)/z$ is a
ratio of two quadratic polynomials in $z$ with
\[
\lim_{z\to\infty}\frac{\Psi(z)}{z}=\frac{t(\mu)-t_{*}}{\Gs_{*}\mu t(\mu)}.
\]
Since $\Psi(z)$ does not depend on $\mu$ we conclude that there exists
$\Ga\in\bb{R}$, such that
\begin{equation}
  \label{tofmu}
  t(\mu)=\frac{t_{*}}{1-\Ga\Gs_{*}\mu}.
\end{equation}
Then, substituting (\ref{tofmu}) together with the parameter values
\begin{equation}
  \label{paramvals0} 
  t_{*}=\frac{\im(z_{1}w_{1})}{\im(w_{1})},\quad
\Gs_{*}=\frac{|w_{1}|^{2}\im(z_{1})}{\im(w_{1})},\quad
\Gg_{*}=\dfrac{\im(z_{1}w_{1})}{\im(z_{1})},\quad
\Gs^{*}=\frac{|z_{1}|^{2}\im(w_{1})}{\im(z_{1})}
\end{equation}
into the formula for $\Psi(z)$ in Maple we obtain that $\Psi(z)=\Ga z$.
We can now go back and recover the formulas for all of the coefficients $L_{ij}(z)$:
\[
\frac{L_{11}(z)}{L_{12}(z)}=\frac{z-t_{*}}{\Gs_{*}},\qquad
\frac{L_{22}(z)}{L_{12}(z)}=\Ga(\Gs^{*}-\Gg_{*}z).
\]
In this case it is easy to see that equations (\ref{Lpmdef}) will be
satisfied. Thus, the desired fractional-linear transformation is given by
\begin{equation}
  \label{MviaFalpha}
  T_{z_{1},w_{1},z}(f(z))=g(z)=\nth{\Ga}\cdot\frac{(z-t_{*})f(z)+\Gs_{*}}{z f(z)+\Gs^{*}-\Gg_{*}z},
\end{equation}
where the sign of $\Ga$ needs to be determined. It is easy to do when we
examine the behavior of functions $f(z)$ and $g(z)$ at infinity.
If we define
\[
\Gg_{g}=\lim_{z\to\infty}g(z),\qquad\Gg_{f}=\lim_{z\to\infty}f(z),
\]
then, according to (\ref{MviaFalpha})
\[
  \Gg_{g}=\frac{\Gg_{f}}{\Ga(\Gg_{f}-\Gg_{*})},\qquad\Gg_{f}=\frac{\Ga\Gg_{*}\Gg_{g}}{\Ga\Gg_{g}-1}.
\]
Since for any $g\in\mathfrak{S}$ we must get $f\in\mathfrak{S}(z_{1},w_{1})$
we conclude that we must have $\Ga<0$. Since multiplication by $-\Ga>0$ maps the
intersection of the two half-planes $\bb{H}_{+}$ and $\bb{H}_{z}$ onto itself,
any choice of $\Ga<0$ will produce a valid parametrization of
$f\in\mathfrak{S}(z_{1},w_{1})$ by $g\in\mathfrak{S}$. For simplicity we set
$\Ga=-1$ and obtain the desired
parametrization of $\mathfrak{S}(z_{1},w_{1})$:
\begin{equation}
  \label{FviaM0}
  \mathfrak{S}(z_{1},w_{1})=\left\{f(z)=\frac{g(z)(\Gg_{*}z-\Gs^{*})-\Gs_{*}}{zg(z)+z-t_{*}}:g\in\mathfrak{S}\right\},
\end{equation}
where the parameters $\Gg_{*}$, $\Gs^{*}$, $\Gs_{*}$, and $t_{*}$ are given in
(\ref{paramvals0}), and provided inequalities (\ref{strictNP1}) hold. The
exact same formula (but with different normalization for $g(z)$) has been
obtained\footnote{There is a typo in \cite{krnu98}: $a_{12}^{(1)}$ should be
$|c_{1}|^{2}/w_{1}$.} in \cite{krnu98}.

The parametrization (\ref{FviaM0}) has several useful properties. At
infinity we obtain
\begin{equation}
  \label{gammas}
  \Gg_{f}=\frac{\Gg_{*}\Gg_{g}}{\Gg_{g}+1}.
\end{equation}
This can be important in
applications in the context of complex electromagnetic susceptibility functions,
where the physically mandated assumption on the interpolant $f\in\mathfrak{S}$ is
$\Gg_{f}=0$. Formula (\ref{gammas}) shows that $\Gg_{f}=0$ \IFF
$\Gg_{g}=0$. This means that starting with $g(z)=0$ and iterating formula
(\ref{FviaM0}) will always result in a decaying Stieltjes function $f(z)$.

Another nice feature of (\ref{FviaM0}) is the degree-reduction property.
To exhibit it let us solve (\ref{FviaM0}) for $g(z)$:
\begin{equation}
  \label{Mviaf}
  g(z)=\frac{f(z)(t_{*}-z)-\Gs_{*}}{zf(z)+\Gs^{*}-\Gg_{*} z}.
\end{equation}
\begin{theorem}
\label{th:degred}
  Suppose $f\in\mathfrak{S}(z_{1},w_{1})$ is a rational function of degree
  $n\ge 1$ in the sense that $f(z)=P_{n}(z)/Q_{n}(z)$, where the degree of
  $Q_{n}$ is $n$, while the degree of $P_{n}$ is either $n$ or $n-1$, while
  $P_{n}$ and $Q_{n}$ have no common roots. Then $g(z)$, given by
  (\ref{Mviaf}) is a rational function in $\mathfrak{S}$ of degree $n-1$ in
  the same sense as above.
\end{theorem}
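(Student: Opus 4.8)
\emph{Proof plan.}
That $g\in\mathfrak{S}$ requires nothing new: (\ref{FviaM0}) and (\ref{Mviaf}) are mutually inverse and (\ref{FviaM0}) was shown to parametrize $\mathfrak{S}(z_{1},w_{1})$ by $\mathfrak{S}$, so (\ref{Mviaf}) carries $f\in\mathfrak{S}(z_{1},w_{1})$ to some $g\in\mathfrak{S}$. (The one exception is $f=\Gg_{*}-\Gs^{*}/z$, the function of (\ref{NEE}), which makes the denominator of (\ref{Mviaf}) vanish identically and corresponds to the boundary point $\mu=\nu=\infty$ excluded from the parametrization; since this $f$ has degree $1$ it occurs only when $n=1$ and is then ruled out.) The content of the theorem is the degree count, which I would set up by rewriting (\ref{Mviaf}) in matrix form: with $f=P_{n}/Q_{n}$ one gets $g=\tilde P/\tilde Q$, where
\[
\begin{pmatrix}\tilde P(z)\\ \tilde Q(z)\end{pmatrix}=M(z)\begin{pmatrix}P_{n}(z)\\ Q_{n}(z)\end{pmatrix},\qquad
M(z)=\begin{pmatrix}t_{*}-z & -\Gs_{*}\\ z & \Gs^{*}-\Gg_{*}z\end{pmatrix}.
\]
The crucial fact, a finite calculation with the values (\ref{paramvals0}), is that
\[
\det M(z)=\Gg_{*}z^{2}-(t_{*}\Gg_{*}+\Gs^{*}-\Gs_{*})\,z+t_{*}\Gs^{*}=\Gg_{*}(z-z_{1})(z-\bra{z_{1}}),
\]
which reduces to the two scalar identities $t_{*}\Gg_{*}+\Gs^{*}-\Gs_{*}=2\Gg_{*}\re(z_{1})$ and $t_{*}\Gs^{*}=\Gg_{*}|z_{1}|^{2}$.

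The second observation is that $(w_{1},1)^{T}\in\ker M(z_{1})$, i.e.\ $(t_{*}-z_{1})w_{1}-\Gs_{*}=0$ and $z_{1}w_{1}+\Gs^{*}-\Gg_{*}z_{1}=0$; these are precisely the statements that the extremal functions $\Gs_{*}/(t_{*}-z)$ and $\Gg_{*}-\Gs^{*}/z$ of (\ref{EE}), (\ref{NEE}) interpolate $w_{1}$ at $z_{1}$ --- equivalently the defining relations (\ref{constr}) --- and are checked directly. Since $z_{1}\in\bb{H}_{+}\subset\bb{C}\setminus\bb{R}_{+}$, $f$ has no pole at $z_{1}$, so $Q_{n}(z_{1})\ne0$ and $(P_{n}(z_{1}),Q_{n}(z_{1}))^{T}=Q_{n}(z_{1})(w_{1},1)^{T}$; hence $(\tilde P(z_{1}),\tilde Q(z_{1}))^{T}=Q_{n}(z_{1})\,M(z_{1})(w_{1},1)^{T}=0$. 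Conjugating, and using that $t_{*},\Gs_{*},\Gg_{*},\Gs^{*}$ are real while $f(\bra{z_{1}})=\bra{w_{1}}$ by property~(iii) of Definition~\ref{def:Stcl}, gives $\tilde P(\bra{z_{1}})=\tilde Q(\bra{z_{1}})=0$ as well. Since $z_{1}\notin\bb{R}$, the quadratic $D(z)=(z-z_{1})(z-\bra{z_{1}})$ divides both $\tilde P$ and $\tilde Q$; writing $\tilde P=DR$, $\tilde Q=DS$ we obtain $g=R/S$. Left-multiplying the matrix identity by $\adj M(z)$, using $\adj M\cdot M=(\det M)I$, and cancelling $D(z)$ yields $\adj M(z)\,(R,S)^{T}=\Gg_{*}(P_{n},Q_{n})^{T}$; a common root of $R$ and $S$ would then be a common root of $P_{n}$ and $Q_{n}$ (recall $\Gg_{*}>0$ by (\ref{strictNP1})), which is impossible, so $R$ and $S$ are coprime.

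For the degrees I would use the two rows of $\adj M(z)\,(R,S)^{T}=\Gg_{*}(P_{n},Q_{n})^{T}$, namely $(\Gs^{*}-\Gg_{*}z)R+\Gs_{*}S=\Gg_{*}P_{n}$ and $-zR+(t_{*}-z)S=\Gg_{*}Q_{n}$. Since $g=R/S\in\mathfrak{S}$, $\deg R\le\deg S$, and when $\deg R=\deg S$ the leading coefficients are in ratio $\lim_{z\to\infty}g(z)\ge0$; either way the coefficient of $z^{\deg S+1}$ on the left of the second equation is nonzero, so $\deg S+1=\deg Q_{n}=n$, i.e.\ $\deg S=n-1$. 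Thus $\deg R\le n-1$, and it remains to exclude $\deg R\le n-3$. Were that so, the first equation would force $\deg P_{n}=n-1$ and comparison of leading coefficients in the two equations would make the ratio of the leading coefficients of $P_{n}$ and $Q_{n}$ equal to $-\Gs_{*}$; but $\deg P_{n}<\deg Q_{n}$ forces $\lim_{z\to\infty}f(z)=0$, so by Theorem~\ref{th:Stieltjes} $f(z)=\int_{0}^{\infty}d\Gs(t)/(t-z)$ with total mass $\int_{0}^{\infty}d\Gs=-\lim_{z\to\infty}zf(z)=\Gs_{*}$. However, a Cauchy--Schwarz argument applied to the identities $\int d\Gs/|t-z_{1}|^{2}=\im(w_{1})/\im(z_{1})$ and $\int(t-\re z_{1})\,d\Gs/|t-z_{1}|^{2}=\re(w_{1})$ (the real and imaginary parts of $f(z_{1})=w_{1}$) shows $\int_{0}^{\infty}d\Gs\ge\Gs_{*}$ with equality only when $\Gs$ is concentrated at one point, in which case $f$ has degree $1$ --- impossible for $n\ge2$; for $n=1$ the inequality $\deg R\le n-3<0$ just forces $R\equiv0$, $g\equiv0$, which is of degree $0=n-1$ in the stated sense. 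Hence $\deg R\in\{n-2,n-1\}$ and the count is complete.

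The determinant and kernel identities are routine and are essentially those already used to build the parametrization, so the one genuinely new ingredient is the sharp bound $\int_{0}^{\infty}d\Gs\ge\Gs_{*}$ on the spectral mass --- or, dually, the estimate $\lim_{z\to\infty}f(z)\le\Gg_{*}$ for $f\in\mathfrak{S}(z_{1},w_{1})$, which follows from Theorem~\ref{th:SviaR} together with the strict positivity of the imaginary part of a non-constant Nevanlinna function on $\bb{H}_{+}$, the two statements being interchanged by the involution (\ref{invsym}). I expect this endpoint estimate, and the bookkeeping of the degenerate low-$n$ cases $f\in\{\Gs_{*}/(t_{*}-z),\,\Gg_{*}-\Gs^{*}/z\}$, to be where most of the care goes.
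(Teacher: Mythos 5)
Your proof is correct, and its skeleton coincides with the paper's: write (\ref{Mviaf}) with the linear coefficients $L_{11}=t_{*}-z$, $L_{12}=-\Gs_{*}$, $L_{21}=z$, $L_{22}=\Gs^{*}-\Gg_{*}z$ acting on $(P_{n},Q_{n})$, observe that both resulting polynomials vanish at $z_{1}$ and $\bra{z_{1}}$ (this is exactly the paper's appeal to (\ref{Lpmdef})), cancel $(z-z_{1})(z-\bra{z_{1}})$, get coprimality of $R,S$ from the inverse map (your adjugate identity $\adj M(z)\,(R,S)^{T}=\Gg_{*}(P_{n},Q_{n})^{T}$, together with $\det M(z)=\Gg_{*}(z-z_{1})(z-\bra{z_{1}})$, is just an explicit form of the paper's ``(\ref{FviaM0}) would force $\deg f<n$''), and rely, as the paper does, on the parametrization of Section~\ref{sub:intalg} for the membership $g\in\mathfrak{S}$. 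Where you diverge is the degree count. The paper splits on $\deg P_{n}\in\{n,n-1\}$ and finishes in one line by the fact (Theorem~\ref{th:prodrep}) that for a coprime rational Stieltjes function the denominator degree exceeds the numerator degree by at most one, which pins both $\deg T^{+}$ and $\deg T^{-}$. You instead use only $\deg R\le\deg S$ to get $\deg S=n-1$, and then exclude $\deg R\le n-3$ by a leading-coefficient computation forcing $\lim_{z\to\infty}zf(z)=-\Gs_{*}$ followed by a sharp Cauchy--Schwarz bound on the spectral mass, $\int_{0}^{\infty}d\Gs\ge\Gs_{*}$ with equality only for a one-point measure. That argument checks out (I verified the identities $\det M=\Gg_{*}(z-z_{1})(z-\bra{z_{1}})$, the kernel relation, and the extremality step), but it is a detour: the same structure theorem you already use for $\deg R\le\deg S$ gives $\deg S-\deg R\le 1$ directly (with $R\equiv0$ handled separately, forcing $n=1$), which is precisely the paper's route. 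On the credit side, you explicitly flag the degenerate interpolant $f=\Gg_{*}-\Gs^{*}/z$, for which the denominator of (\ref{Mviaf}) vanishes identically; this $n=1$ case is a genuine exception that the paper's proof also passes over silently (there $T^{-}\equiv0$), so your remark is a point of extra care rather than a defect.
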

\begin{proof}
  The essential feature of (\ref{Mviaf}) is that all of its
  coefficients $L_{ij}(z)$ are linear in $z$.
If $f(z)=P_{n}(z)/Q_{n}(z)$, then
\[
L_{11}(z)f(z)+L_{12}(z)=\frac{L_{11}(z)P_{n}(z)+L_{12}(z)Q_{n}(z)}{Q_{n}(z)}.
\]
Formulas (\ref{Lpmdef}) imply that the polynomial
$L_{11}(z)P_{n}(z)+L_{12}(z)Q_{n}(z)$ will have a pair of complex conjugate
roots $z_{1}$ and $\bra{z_{1}}$. We can therefore write
\[
L_{11}(z)P_{n}(z)+L_{12}(z)Q_{n}(z)=(z-z_{1})(z-\bra{z_{1}})T^{+}(z),\qquad
L_{11}=t_{*}-z,\quad L_{12}=-\Gs_{*}.
\]
It follows that $\deg(T^{+})=n-1$, if $\deg(P_{n})=n$, and $\deg(T^{+})\le
n-2$, if $\deg(P_{n})=n-1$. Similarly,
\[
L_{21}(z)P_{n}(z)+L_{22}(z)Q_{n}(z)=(z-z_{1})(z-\bra{z_{1}})T^{-}(z),\qquad
L_{21}=z,\quad L_{22}=\Gs^{*}-\Gg_{*} z,
\]
and $\deg(T^{-})\le n-1$ , if $\deg(P_{n})=n$, and $\deg(T^{-})=n-1$, if
$\deg(P_{n})=n-1$. Since $g(z)=T^{+}(z)/T^{-}(z)$ is in $\mathfrak{S}$ the
degree of $T^{-}(z)$ can be at most one above the degree of $T^{+}(z)$. This
shows that we can only have equalities in the degree inequalities
above. Finally, if $T^{+}$ and $T^{-}$ have common roots, then formula
(\ref{FviaM0}) would imply that $f(z)$ is a rational function of degree
strictly less than $n$, contradicting our assumption. 
\end{proof}

The parametrization (\ref{FviaM0}) of $\mathfrak{S}(z_{1},w_{1})$ by
elements of $\mathfrak{S}$ leads to the recursive interpolation algorithm.
Given the data $\Bw\in V(\Bz)$, $\Bz=(z_{1},\ldots,z_{n})$ for $n$ distinct points
$\{z_{1},\ldots,z_{n}\}\subset\bb{H}_{+}$, we define the interpolant $f(z)$ by
(\ref{FviaM0}), where $g(z)\in\mathfrak{S}$ satisfies $n-1$ constraints
\begin{equation}
  \label{recursion}
g(z_{j})=\frac{L_{11}(z_{j})w_{j}+L_{12}(z_{j})}{w_{j}L_{21}(z_{j})+L_{22}(z_{j})},
\quad j=2,\ldots,n,
\end{equation}
provided
\[
w_{j}L_{21}(z_{j})+L_{22}(z_{j})\not=0,\qquad j=2,\ldots,n.
\]
In that case $f(z_{j})=w_{j}$,
$j=2,\ldots,n$, and
\[
f(z_{1})=\frac{L_{22}(z_{1})g(z_{1})-L_{12}(z_{1})}{L_{11}(z_{1})-g(z_{1})L_{21}(z_{1})}.
\]
Using equations (\ref{Lpmdef}) we obtain
\[
f(z_{1})=\frac{-L_{21}(z_{1})w_{1}g(z_{1})+L_{11}(z_{1})w_{1}}{L_{11}(z_{1})-g(z_{1})L_{21}(z_{1})}
=w_{1},
\]
provided $L_{11}(z_{1})-g(z_{1})L_{21}(z_{1})\not=0$. This condition is always
satisfied, since linear functions $L_{ij}(z)$ are such that
$f\in\mathfrak{S}$, for any $g\in\mathfrak{S}$. This requires that the denominator in
(\ref{FviaM}) never vanishes when $z\in\bb{H}_{+}$. 

In order to finish the analysis we need to consider
the special case when there exists $k\in\{2,\ldots,n\}$, such that
\begin{equation}
  \label{exitearly}
  L_{22}(z_{k})+w_{k}L_{21}(z_{k})=0.
\end{equation}
In this case the corresponding relation (\ref{recursion}) will be
undefined. But in this case the four real equations
\[
\begin{cases}
  L_{22}(z_{k})+w_{k}L_{21}(z_{k})=0,\\
  L_{22}(z_{1})+w_{1}L_{21}(z_{1})=0
\end{cases}
\]
form a linear homogeneous system of equations with four real unknowns $a_{21}$,
$a_{22}$, $b_{21}$, and $b_{22}$, where
\[
L_{21}(z)=a_{21}z+b_{21},\qquad L_{22}(z)=a_{22}z+b_{22}.
\]
 Thus, the determinant of this system must
vanish. Maple calculations % see Milton_extremals2.mw
show that this implies that $\det\BN=0$, where
\[
\BN=
\begin{bmatrix}
\dfrac{w_{1}-\bra{w_{1}}}{z_{1}-\bra{z_{1}}}&\dfrac{w_{1}-\bra{w_{k}}}{z_{1}-\bra{z_{k}}}\\[2ex]
\dfrac{w_{k}-\bra{w_{1}}}{z_{k}-\bra{z_{1}}}&\dfrac{w_{k}-\bra{w_{k}}}{z_{k}-\bra{z_{k}}}
\end{bmatrix}.
\]
We have already proved that in this case the support of $\Gs$ must be a single
point. Thus, when (\ref{exitearly}) is satisfied we just return the rational
function $\phi_{N}(z)$, given by (\ref{NEE}). Indeed, (\ref{exitearly}) implies
\[
w_{k}=\Gg_{*}-\frac{\Gs^{*}}{z_{k}}=\phi_{N}(z_{k}).
\]
At the same time $\phi_{N}(z)$ also satisfies
$\phi_{N}(z_{1})=w_{1}$. It follows that $f(z)=\phi_{N}(z)$.

\subsection{The least squares problem}
For $\Bw\in\bb{C}^{n}$ there are two mutually exclusive logical
possibilities. Either $\Bw\in V(\Bz)$ or $\Bw\not\in V(\Bz)$. The former case,
called the \emph{interpolation problem} has been considered in the previous
section. In the latter case, when there is no Stieltjes function $f$ satisfying
$\Bw=f(\Bz)$, we want to solve the \emph{least squares problem}
(\ref{lsqStielt}), which can be also reformulated as
\begin{equation}
  \label{lsqV}
  \GS(\Bw,\Bz)=\min_{\Bp\in V(\Bz)}|\Bp-\Bw|.
\end{equation}
The minimizer $\Bp^{*}$ of (\ref{lsqV}) exists because $V(\Bz)$ is a closed
subset of $\bb{C}^{n}$. It is unique because $V(\Bz)$ is convex. Moreover,
since $\Bw\not\in V(\Bz)$, the minimizer $\Bp^{*}$ must lie on the boundary of
$V(\Bz)$. In this case, the Nevanlinna-Pick theorem~\ref{th:NP} for the
Stieltjes class says that there exists a unique Stieltjes function
$f_{*}\in\mathfrak{S}$ satisfying $f_{*}(\Bz)=\Bp^{*}$.

Let us analyze the properties of this unique minimizer. Here we follow the
analysis of Caprini \cite{capr80}, who derived the necessary and sufficient
conditions for a minimizer in (\ref{lsqV}). Caprini's method is based on our
ability to compute the effect of variations of $\Gg$ and spectral
measure $\Gs$ in representation (\ref{Stielrep}) on the
functional we want to minimize. Suppose that
\[
f_{*}(z)=\Gg+\int_{0}^{\infty}\frac{d\Gs(t)}{t-z}
\]
is the minimizer in (\ref{lsqStielt}). Then $p_{j}^{*}=f_{*}(z_{j})$ minimizes
(\ref{lsqV}).  Let
\begin{equation}
  \label{compet}
  \Tld{f}(z)=\Tld{\Gg}+\int_{0}^{\infty}\frac{d\Tld{\Gs}(t)}{t-z}
\end{equation}
be a competitor in (\ref{lsqStielt}), and let $\Tld{p}_{j}=\Tld{f}(z_{j})$. The variation
$\phi=\Tld{f}-f_{*}$ can then be written as
\[
\phi(z)=\GD\Gg+\int_{0}^{\infty}\frac{d\nu(t)}{t-z},\qquad
\nu=\Tld{\Gs}-\Gs,\quad\GD\Gg=\Tld{\Gg}-\Gg.
\]
We then compute
\[
|\Tld{\Bp}-\Bw|^{2}-|\Bp^{*}-\Bw|^{2}%=|(\Tld{\Bp}-\Bp^{*})+\Bp^{*}-\Bw|^{2}-|\Bp^{*}-\Bw|^{2}
=|\Tld{\Bp}-\Bp^{*}|^{2}+2\re(\Bp^{*}-\Bw,\Tld{\Bp}-\Bp^{*}).
\]
Observing that
\[
\Tld{p}_{j}-p^{*}_{j}=\GD\Gg+\int_{0}^{\infty}\frac{d\nu(t)}{t-z_{j}},
\]
we see that
\[
\re(\Bp^{*}-\Bw,\Tld{\Bp}-\Bp^{*})=(\GD\Gg)\re\sum_{j=1}^{n}(p^{*}_{j}-w_{j})+
\int_{0}^{\infty}\re\sum_{j=1}^{n}\frac{p^{*}_{j}-w_{j}}{t-\bra{z_{j}}}d\nu(t).
\]
The real rational function
\begin{equation}
  \label{Caprini}
C(t)=\re\sum_{j=1}^{n}\frac{p^{*}_{j}-w_{j}}{t-\bra{z_{j}}},\qquad  t\ge 0,
\end{equation}
which we will call \emph{the Caprini function}, will play an essential role in our
algorithm for solving the least squares problem (\ref{lsqStielt}).

In terms of the Caprini function we obtain
\begin{equation}
  \label{var}
    |\Tld{\Bp}-\Bw|^{2}-|\Bp^{*}-\Bw|^{2}=
2(\GD\Gg)\lim_{t\to\infty}tC(t)+2\int_{0}^{\infty}C(t)d\nu(t)+|\Tld{\Bp}-\Bp^{*}|^{2}.
\end{equation}
This formula permits us to formulate and prove Caprini's necessary and
sufficient conditions for the minimizer in (\ref{lsqV}). This is a particular
version of Caprini's result \cite{capr80}, where the real and imaginary parts
of each individual measurement could have a different weight in the least
squares functional.
\begin{theorem}
  \label{th:Caprini}
Suppose that the minimum in (\ref{lsqStielt}) is nonzero, then the
unique minimizer $f_{*}\in\mathfrak{S}$ is given by
 \begin{equation}
   \label{fstar}
   f_{*}(z)=\Gg+\sum_{j=1}^{N}\frac{\Gs_{j}}{t_{j}-z}
 \end{equation}
 for some $\Gs_{j}>0$, $t_{j}\ge 0$ and $\Gg\ge 0$.  Moreover, $f_{*}$, given
 by (\ref{fstar}) is the minimizer in (\ref{lsqStielt}) \IFF
its Caprini
 function $C(t)$ is nonnegative and vanishes at $t=t_{j}$, $j=1,\ldots,N$, and
``at infinity'', in the sense that
\begin{equation}
  \label{zatinf}
  \re\sum_{j=1}^{n}(p_{j}^{*}-w_{j})=\lim_{t\to\infty}tC(t)=0,
\end{equation}
provided $\Gg>0$.
\end{theorem}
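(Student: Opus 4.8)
The plan is to exploit the variational identity \eqref{var} in two directions. First I would establish that the minimizer $f_*$ must be of the special form \eqref{fstar}, i.e.\ a rational Stieltjes function with finitely many poles $N\le n$. This follows from Theorem~\ref{th:NP}: since $\GS(\Bw,\Bz)>0$ we have $\Bw\notin V(\Bz)$, so the (unique) minimizer $\Bp^*\in\Md V(\Bz)$, whence either $\rank\BN(\Bz,\Bp^*)<n$ or $\rank\BP(\Bz,\Bp^*)<n$, and the last sentence of Theorem~\ref{th:NP} gives a unique rational $f_*\in\mathfrak{S}$ with $f_*(\Bz)=\Bp^*$. A rational function in $\mathfrak{S}$ with the Stieltjes integral representation must have a purely atomic spectral measure supported on $[0,\infty)$ (plus possibly the constant $\Gg\ge 0$), so it has the form \eqref{fstar} with $\Gs_j>0$, $t_j\ge 0$, and $N$ equal to the number of atoms; the bound $N\le n$ (not needed for the statement but reassuring) comes from the degree count as in Theorem~\ref{th:degred}.

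Next I would prove the \emph{necessity} of the conditions on $C(t)$. Here the competitor $\Tld f$ in \eqref{compet} is chosen as a perturbation of $f_*$: to test nonnegativity of $C$ at a point $t_0\ge 0$, take $\Tld\Gs=\Gs+\eps\,\delta_{t_0}$ (adding a small atom at $t_0$) and $\Tld\Gg=\Gg$, so $\nu=\eps\,\delta_{t_0}$ and \eqref{var} becomes $2\eps C(t_0)+O(\eps^2)\ge 0$; letting $\eps\to 0^+$ forces $C(t_0)\ge 0$. To get $C(t_j)=0$ at the existing atoms, perturb the mass of the $j$-th atom: $\Tld\Gs=\Gs+\eps\,\delta_{t_j}$ is admissible for $\eps>0$ small, and $\Tld\Gs=\Gs-\eps\,\delta_{t_j}$ is admissible for $\eps>0$ small as well (since $\Gs_j>0$), so $\nu=\pm\eps\,\delta_{t_j}$ are both allowed, giving $\pm 2\eps C(t_j)+O(\eps^2)\ge 0$ and hence $C(t_j)=0$. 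Similarly, when $\Gg>0$ one can vary $\Gg$ in both directions, $\Tld\Gg=\Gg\pm\eps$, with $\nu=0$, so \eqref{var} gives $\pm 2\eps\lim_{t\to\infty}tC(t)+O(\eps^2)\ge0$, which is \eqref{zatinf}. (One also checks the variation that moves an atom's location $t_j$; since $C(t_j)=0$ and $C\ge0$, $t_j$ is a minimum of $C$, so $C'(t_j)=0$ when $t_j>0$, which is automatically consistent—this is not needed for the stated conditions but confirms there is no additional constraint.)

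Finally, for \emph{sufficiency}: suppose $f_*$ of the form \eqref{fstar} has Caprini function $C\ge 0$ on $[0,\infty)$ with $C(t_j)=0$ for all $j$, and $\lim_{t\to\infty}tC(t)=0$ whenever $\Gg>0$. Let $\Tld f$ be \emph{any} competitor in $\mathfrak{S}$, with $\Tld\Gg\ge 0$ and $\Tld\Gs$ a positive Radon measure; write $\nu=\Tld\Gs-\Gs$ and $\GD\Gg=\Tld\Gg-\Gg$. In \eqref{var} the term $|\Tld\Bp-\Bp^*|^2\ge 0$; the term $\int_0^\infty C(t)\,d\nu(t)$ I split using the Jordan/Hahn decomposition relative to the atomic part of $\Gs$: on $[0,\infty)\setminus\{t_1,\dots,t_N\}$ the measure $\Tld\Gs$ is nonnegative and $\nu=\Tld\Gs\ge 0$ there, so $\int C\,d\nu\ge 0$ since $C\ge 0$; at each atom $t_j$, $C(t_j)=0$ so that point contributes nothing regardless of the sign of $\nu(\{t_j\})=\Tld\Gs(\{t_j\})-\Gs_j$. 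Hence $\int_0^\infty C(t)\,d\nu(t)\ge 0$. For the $\GD\Gg$ term: if $\Gg>0$ then $\lim_{t\to\infty}tC(t)=0$ kills it; if $\Gg=0$ then $\GD\Gg=\Tld\Gg\ge 0$ and $\lim_{t\to\infty}tC(t)\ge 0$ (since $C\ge0$ near infinity and $tC(t)\to \re\sum(p_j^*-w_j)$, a real limit that must be $\ge0$ by sign-consistency with $C\ge0$), so the product is $\ge 0$. Therefore the right side of \eqref{var} is $\ge 0$, i.e.\ $|\Tld\Bp-\Bw|\ge|\Bp^*-\Bw|$ for every competitor, so $f_*$ is the minimizer; uniqueness was already noted.

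The main obstacle I anticipate is the bookkeeping around the $\GD\Gg$ / ``at infinity'' term in the case $\Gg=0$, where \eqref{zatinf} is \emph{not} assumed: one must argue that $\lim_{t\to\infty}tC(t)=\re\sum_{j=1}^n(p_j^*-w_j)\ge 0$ follows from $C\ge0$ together with the structure of $C$ as a rational function with the prescribed zeros, rather than being an independent hypothesis—i.e.\ justifying why the sign works out so that the $(\GD\Gg)\lim tC(t)$ term never becomes negative. A second, more technical point is making the one-sided perturbations $\Tld\Gs=\Gs\pm\eps\delta_{t_j}$ rigorous as elements of $\mathfrak{S}$ and controlling the $O(\eps^2)$ remainder in \eqref{var} uniformly, which is routine but should be stated carefully.
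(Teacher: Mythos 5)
Your proof is correct, and its variational core is the same as the paper's: necessity via two-sided perturbations of the atom masses and of $\Gg$ (when $\Gg>0$), a one-sided addition of a small atom at an arbitrary $t_0\ge 0$ to force $C(t_0)\ge 0$, and sufficiency via splitting $\nu=\Tld{\Gs}-\Gs$ into its part at the atoms (harmless because $C(t_j)=0$) plus a nonnegative remainder integrated against $C\ge 0$. Where you genuinely diverge is the structural claim \eqref{fstar}: you get rationality and finite atomic support up front from the ``moreover'' clause of Theorem~\ref{th:NP} (the projection $\Bp^{*}$ lies on $\Md V(\Bz)$, hence is interpolated by a unique rational $f_{*}$), combined with the representation of rational Stieltjes functions in Theorem~\ref{th:prodrep}. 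The paper instead extracts this from the optimality conditions themselves: it first proves $C(t_0)=0$ at every point of the support of a general, possibly non-atomic, spectral measure---which is why it needs the extra ``smearing'' competitor $\Gs_{\Ge}=\Gs-\Gs|_{I_{\Ge}(t_{0})}+\Gth\, m(t_{0},\Ge)\Gd_{t_{0}}$ for non-atomic points of the support---and then observes that $C$ is a not-identically-zero rational function (since $\Bw\notin V(\Bz)$) with at most $2n-1$ real zeros, so the support must be finite and $f_{*}$ rational. Your route is shorter and skips the smearing construction, at the cost of leaning on the boundary-uniqueness/rationality part of the Nevanlinna--Pick theorem, which the paper quotes without proof; the paper's route is self-contained within the variational framework and shows that optimality alone forces rationality. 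Finally, the obstacle you flag about the $(\GD\Gg)\lim_{t\to\infty}tC(t)$ term when $\Gg=0$ is not a real gap: exactly as you say, $C\ge 0$ on $[0,+\infty)$ gives $\lim_{t\to\infty}tC(t)\ge 0$ while $\GD\Gg=\Tld{\Gg}\ge 0$, which suffices; the paper handles the same point by noting that $\GD\Gg<0$ forces $\Gg>0$, so that \eqref{zatinf} applies.
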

\begin{proof}
If $\Gg>0$, then we can consider the competitor (\ref{compet}) with
$\Tld{\Gs}=\Gs$. Formula (\ref{var}) then implies that
\[
2(\GD\Gg)\lim_{t\to\infty}tC(t)+(\GD\Gg)^{2}>0,
\]
where $\GD\Gg$ can be either positive or negative and can be chosen as small
in absolute value as we want. This implies (\ref{zatinf}).

Next, suppose $t_{0}\in[0,+\infty)$ is in the support of $\Gs$. For every
$\Ge>0$ we define $I_{\Ge}(t_{0})=\{t\ge 0:|t-t_{0}|<\Ge\}$. Saying that
$t_{0}$ is in the support of $\Gs$ is equivalent to
$m(t_{0},\Ge)=\Gs(I_{\Ge}(t_{0}))>0$ for all $\Ge>0$.  Then, there are two
possibilities. Either
  \begin{enumerate}
  \item[(i)] $\displaystyle\lim_{\Ge\to 0}m(t_{0},\Ge)=0$, or
  \item[(ii)] $\displaystyle\lim_{\Ge\to 0}m(t_{0},\Ge)=\Gs_{0}>0$
  \end{enumerate}
In case (i) we construct a competitor measure
\[
\Gs_{\Ge}=\Gs-\Gs|_{I_{\Ge}(t_{0})}+\Gth m(t_{0},\Ge)\Gd_{t_{0}},
\]
where $\Gth>0$ is an arbitrary constant. We then define
\begin{equation}
  \label{feps}
  f_{\Ge}(z)=\Gg+\int_{0}^{\infty}\frac{d\Gs_{\Ge}(t)}{t-z},\qquad
p^{\Ge}_{j}=f_{\Ge}(z_{j}).
\end{equation}
Formula (\ref{var}) then implies
\[
\lim_{\Ge\to 0}\frac{|\Bp^{\Ge}-\Bw|^{2}-|\Bp^{*}-\Bw|^{2}}{m(t_{0},\Ge)}=
2(\Gth-1)C(t_{0}),
\]
since $|\Bp^{\Ge}-\Bp^{*}|\le Cm(t_{0},\Ge)$, where $C$ is independent of $\Ge$.
If $f_{*}$ is the minimizer, then we must have $(\Gth-1)C(t_{0})\ge 0$ for all
$\Gth> 0$, which implies that $C(t_{0})=0$. 

In the case (ii) we have $\Gs(\{t_{0}\})=\Gs_{0}>0$. Then, for every
$|\Ge|<\Gs_{0}$ we construct a competitor measure
\begin{equation}
  \label{compmeas}
  \Gs_{\Ge}=\Gs+\Ge\Gd_{t_{0}},
\end{equation}
as well as the corresponding $f_{\Ge}$ and $\Bp^{\Ge}$, given by (\ref{feps}). We then compute
\begin{equation}
  \label{sigmavar}
  \lim_{\Ge\to 0}\frac{|\Bp^{\Ge}-\Bw|^{2}-|\Bp^{*}-\Bw|^{2}}{\Ge}=2C(t_{0}).
\end{equation}
Since in this case $\Ge$ can be both positive and negative we conclude that
$C(t_{0})=0$. 

Hence, we have shown that $C(t_{0})=0$ whenever $t_{0}\in[0,+\infty)$ is in
the support of the spectral measure $\Gs$ of the minimizer $f_{*}$. It remains
to observe that for any $t\in\bb{R}$
\[
C(t)=\sum_{j=1}^{n}\left\{\frac{p^{*}_{j}-w_{j}}{t-\bra{z_{j}}}+
\frac{\bra{p^{*}_{j}}-\bra{w_{j}}}{t-z_{j}}\right\}.
\]
Thus, $C(t)$ is a restriction to the real line of a rational function on the
\nbh\ of the real line in the complex $t$-plane. By assumption, $\Bw\not\in
V(\Bz)$, and therefore $C(t)$ is not identically zero. In particular, $C(t)$
cannot have more than $2n-1$ zeros. We conclude that the support of the
spectral measure of the minimizer $f_{*}$ must be finite, and the minimizer
must be a rational function.

Now let us consider the competitor (\ref{feps}) defined by (\ref{compmeas}),
where $\Ge>0$ and $t_{0}$ is not in the support of $\Gs$. Formula (\ref{sigmavar})
then implies that
\[
  \lim_{\Ge\to 0^{+}}\frac{|\Bp^{\Ge}-\Bw|^{2}-|\Bp^{*}-\Bw|^{2}}{\Ge}=2C(t_{0})\ge 0.
\]
This proves that $C(t)\ge 0$ for all $t\ge 0$. The necessity of the stated
properties of the Caprini function $C(t)$ is now established. 

Sufficiency is a direct consequence of formula (\ref{var}). For any competitor
measure $\Tld{\Gs}$ we can write
\[
\nu=\Tld{\Gs}-\Gs=\sum_{j=1}^{N}(\GD\Gs_{j})\Gd_{t_{j}}+\Tld{\nu},
\]
where $\Tld{\nu}$ is a positive Radon measure without any point masses at
$t=t_{j}$, $j=1,\ldots,N$. It is obtained by eliminating point masses of
$\Tld{\Gs}$ at $t_{j}$, $j=1,\ldots,N$, if it has any:
\[
\Tld{\nu}=\Tld{\Gs}-\sum_{j=1}^{N}\Tld{\Gs}(\{t_{j}\})\Gd_{t_{j}}.
\]
We then compute, via formula (\ref{var}), taking
into account that $C(t_{j})=0$
\[
|\Tld{\Bp}-\Bw|^{2}-|\Bp^{*}-\Bw|^{2}=
2(\GD\Gg)\lim_{t\to\infty}tC(t)+2\int_{0}^{\infty}C(t)d\Tld{\nu}(t)+|\Tld{\Bp}-\Bp^{*}|^{2}\ge 0,
\]
since $C(t)\ge 0$. If $\GD\Gg<0$, then $\Gg=\Tld\Gg-\GD\Gg>0$, and therefore
the first term on the \rhs\ vanishes due to (\ref{zatinf}).
\end{proof}
We observe that that if $t_{j}>0$, then we must also have $C'(t_{j})=0$, since
$t=t_{j}$ is a point of local minimum of $C(t)$. If we write formula (\ref{fstar}) in
the form
\[
f_{*}(z)=\Gg-\frac{\Gs_{0}}{z}+\sum_{j=1}^{N}
\frac{\Gs_{j}}{t_{j}-z},\qquad\Gg\ge 0,\ \Gs_{0}\ge 0,\ t_{j}>0,\
\Gs_{j}>0,\ j=1,\ldots,N,
\]
then we have exactly $2(N+1)$ equations for $2(N+1)$ unknowns $\Gg$,
$\Gs_{0}$, $t_{j}$, $\Gs_{j}$, $j=1,\ldots,N$:
\begin{equation}
  \label{Copteqs}
  \Gg\lim_{t\to\infty}tC(t)=0,\quad\Gs_{0}C(0)=0,\quad C(t_{j})=0,\quad
C'(t_{j})=0,\quad j=1,\ldots,N.
\end{equation}
Obviously, these equations do not enforce the nonnegativity of $C(t)$ and may
very well be satisfied when some $t_{j}$ are points of local maxima and $C(t)$
is not nonnegative. Hence, the equations should not really be regarded as
equations for the minimizer. Instead the intended use of
Theorem~\ref{th:Caprini} is to provide the certificate of optimality of a
purported solution of (\ref{lsqV}) by exhibiting the graph of $C(t)$ that
shows that the necessary and sufficient conditions of optimality are
satisfied. In fact, equations (\ref{Copteqs}) are used in our algorithm to
make the final adjustments when a near-optimal solution is obtained.

\subsection{Analytic structure of the boundary of $V(\Bz)$}
The analytic structure of the interpolation body $V(\Bz)$ defined in
(\ref{Vofzdef}) is well-understood. The set $V(\Bz)$ is a closed convex cone in
$\bb{C}^{n}$ with non-empty interior $V^{\circ}(\Bz)$, characterized by the
inequalities $\BN(\Bz,\Bw)>0$, $\BP(\Bz,\Bw)>0$ in the sense of quadratic forms. The
set 
\[
\mathfrak{S}(\Bz,\Bw)=\{f\in\mathfrak{S}:f(\Bz)=\Bw\}
\] 
is parametrized by
elements of $\mathfrak{S}$ via the recursive interpolation procedure described
in Section~\ref{sub:intalg}. The function $f\in\mathfrak{S}(\Bz,\Bw)$
corresponding to $0\in\mathfrak{S}$ in such a parametrization has the form
\[
f(z)=\sum_{j=1}^{n}\frac{\Gs_{j}}{t_{j}-z},\quad\Gs_{j}>0,\ 0<t_{1}<\dots<t_{n},
\]
with the list of parameters $\Gs_{j}$ and $t_{j}$ above, in one-to-one
correspondence with points $\Bw=f(\Bz)$ in $V^{\circ}(\Bz)$
\cite{mi81,milt81b}. 

By contrast with $V^{\circ}(\Bz)$, each point on $\Md V(\Bz)$ can be
realized as a list of values of a unique Stieltjes function, which must
necessarily be rational. In view of Theorem~\ref{th:NP} the boundary of
$V(\Bz)$ can be naturally written as a union of two overlapping sets
\[
\Md V^{N}(\Bz)=\{\Bw\in V(\Bz):\det\BN(\Bz,\Bw)=0\},\ 
\Md V^{P}(\Bz)=\{\Bw\in V(\Bz):\det\BP(\Bz,\Bw)=0\}.
\]
We can think of them as two sides of a clam shell that meet along the ``rim''
\[
\Md V^{NP}(\Bz)=\{\Bw\in V(\Bz):\det\BN(\Bz,\Bw)=0,\ \det\BP(\Bz,\Bw)=0\}.
\]
Each point $\Bw\in \Md V^{N}(\Bz)$ is attained by a unique rational
function $f\in \mathfrak{S}_{n}^{N}$, where
\begin{equation}
  \label{Nform}
\mathfrak{S}_{n}^{N}=\left\{\Gg+\sum_{k=1}^{n-1}\frac{\Gs_{k}}{t_{k}-z}: \Gg\ge
0,\ t_{k}\ge 0,\ \Gs_{k}\ge 0\right\}.
\end{equation}
Similarly, each point $\Bw\in \Md V^{P}(\Bz)$ is attained by a unique rational
function $f\in \mathfrak{S}_{n}^{P}$.  Unfortunately, a simple representation,
like (\ref{Nform}) of functions in $\mathfrak{S}_{n}^{P}$ is not
possible. This is because the parameter space $(\Gg,\BGs,\Bt)$ in
(\ref{Nform}) is non-compact, and it is an accident that the set
$\mathfrak{S}_{n}^{N}$ happens to be closed (in the space of holomorphic
functions on $\bb{C}\setminus\bb{R}_{+}$). The most concise, but somewhat
indirect description of $\mathfrak{S}_{n}^{P}$ can be formulated using the
``reflection'' symmetry $\CR: f\mapsto-1/(zf)$ of class $\mathfrak{S}$:
$\mathfrak{S}_{n}^{P}=\CR(\mathfrak{S}_{n}^{N})$.
Another description of $\mathfrak{S}_{n}^{P}$ is the closure of the set
\begin{equation}
  \label{Pform}
\Tld{\mathfrak{S}}_{n}^{P}=\left\{-\frac{\Gs_{0}}{z}+
\sum_{k=1}^{n-1}\frac{\Gs_{k}}{t_{k}-z}:\Gs_{0}\ge 0,\ t_{k}\ge 0,\ \Gs_{k}\ge 0\right\}
\end{equation}
with respect to the uniform convergence on compact subsets of
$\bb{C}\setminus\bb{R}_{+}$. Explicitly, the set $\mathfrak{S}_{n}^{P}$ can be
described as $\mathfrak{S}_{n}^{P}=\Tld{\mathfrak{S}}_{n}^{P}\cup\mathfrak{S}_{n-1}^{N}$.

Similarly, each point $\Bw\in \Md V^{NP}(\Bz)$ is
attained by a unique rational function $f\in\mathfrak{S}_{n}^{NP}$, where
$\mathfrak{S}_{n}^{NP}$ can be described implicitly as the closure of 
\begin{equation}
  \label{NPform}
  \Tld{\mathfrak{S}}_{n}^{NP}=\left\{\sum_{k=1}^{n-1}\frac{\Gs_{k}}{t_{k}-z}:
    t_{k}\ge 0,\ \Gs_{k}\ge 0\right\},
\end{equation}
or explicitly, as $\mathfrak{S}_{n}^{NP}=\Tld{\mathfrak{S}}_{n}^{NP}\cup\mathfrak{S}_{n-1}^{N}$.

If we define the evaluation operator $E_{\Bz}:\mathfrak{S}\to\bb{C}^{n}$ by
$E_{\Bz}f=f(\Bz)$, then we have both
\[
V(\Bz)=E_{\Bz}(\mathfrak{S})\text{ and } V(\Bz)=E_{\Bz}(\mathfrak{S}_{n+1}^{NP}).
\]
Moreover, $E_{\Bz}:\mathfrak{S}_{n+1}^{NP}\to V(\Bz)$ is a bijection. The
statements above are all consequences of the following classical theorem
\cite{kakr74,kackr74}.
\begin{theorem}
\label{th:prodrep}
  Suppose that $f\in\mathfrak{S}$ is a rational function. Then it can be written
  uniquely in the form
\begin{equation}
  \label{Frep}
  f(z)=\Gg+\sum_{j=1}^{n}\frac{\Gs_{j}}{t_{j}-z},\qquad\Gg\ge 0,\ \Gs_{j}>0,\
0\le t_{1}<t_{2}<\dots<t_{n},
\end{equation}
where $n\ge 0$ is an integer. If $\Gg>0$, then $f(z)$ has exactly $n$
distinct real zeros $z=x_{j}$, $j=1,\ldots,n$, satisfying the interlacing property
\[
0\le t_{1}<x_{1}<t_{2}<x_{2}<\ldots<t_{n}<x_{n}<+\infty,
\]
so that $f(z)$ can also be written as a product
\begin{equation}
  \label{prodrep}
  f(z)=\Gg\prod_{j=1}^{n}\frac{z-x_{j}}{z-t_{j}}.
\end{equation}
If $\Gg=0$ and $n\ge 1$, then there are exactly $n-1$ distinct real zeros
$z=x_{j}$ and
\begin{equation}
  \label{prodrep0}
  f(z)=\frac{A}{t_{n}-z}\prod_{j=1}^{n-1}\frac{z-x_{j}}{z-t_{j}},\quad A>0,\
0\le t_{1}<x_{1}<t_{2}<\ldots<x_{n-1}<t_{n}<+\infty.
\end{equation}
\end{theorem}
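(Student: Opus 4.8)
The plan is to reduce the statement to the Stieltjes integral representation of Theorem~\ref{th:Stieltjes} and then carry out elementary real-variable analysis of $f$ on the line. First I would invoke Theorem~\ref{th:Stieltjes} to write $f(z)=\Gg+\int_{0}^{\infty}d\Gs(t)/(t-z)$ with $\Gg\ge 0$ and $\Gs\ge 0$ a Radon measure on $[0,+\infty)$. Since $f$ is rational and holomorphic on $\bb{C}\setminus\bb{R}_{+}$ with the finite limit $\Gg$ at infinity, it has only finitely many poles, all lying in $[0,+\infty)$; call them $t_{1}<\dots<t_{n}$. Off these points $f$ is holomorphic on the real axis and, by property (iii), real-valued there, so $\lim_{y\to 0^{+}}\im f(x+iy)=0$ away from the poles; by the inversion formula (\ref{recovery}) this forces $\Gs$ to be supported on $\{t_{1},\dots,t_{n}\}$. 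Positivity of $\Gs$ then gives $\Gs=\sum_{j}\Gs_{j}\Gd_{t_{j}}$ with $\Gs_{j}>0$, so the poles are simple and $f$ has the form (\ref{Frep}). Uniqueness is immediate: the $t_{j}$ are the poles, the $\Gs_{j}$ are (up to sign) their residues, and $\Gg=\lim_{z\to\infty}f(z)$.

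Next I would analyse $f$ on $\bb{R}\setminus\{t_{1},\dots,t_{n}\}$. There $f'(x)=\sum_{j}\Gs_{j}/(t_{j}-x)^{2}>0$, so $f$ is strictly increasing on each connected component, with $f(x)\to+\infty$ as $x\to t_{j}^{-}$ and $f(x)\to-\infty$ as $x\to t_{j}^{+}$. On $(-\infty,t_{1})$ every summand $\Gs_{j}/(t_{j}-x)$ is positive, so $f(x)>\Gg\ge 0$ and there is no zero (this re-derives property (ii) as well). On each $(t_{j},t_{j+1})$, $j=1,\dots,n-1$, the function increases from $-\infty$ to $+\infty$ and therefore has exactly one zero $x_{j}$, giving $t_{j}<x_{j}<t_{j+1}$. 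On the last interval $(t_{n},+\infty)$, $f$ is increasing with $f(x)\to\Gg$ at $+\infty$: if $\Gg>0$ there is exactly one further zero $x_{n}\in(t_{n},+\infty)$, for a total of $n$ interlacing zeros; if $\Gg=0$ (and $n\ge 1$) every summand is negative for $x>t_{n}$, so $f<0$ there and there is no further zero, leaving $n-1$ interlacing zeros.

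Finally I would put (\ref{Frep}) over the common denominator $D(z)=\prod_{j}(t_{j}-z)$, with numerator $N(z)$. When $\Gg>0$, $N$ has degree $n$ with leading coefficient $\Gg(-1)^{n}$, and $N(t_{j})=\Gs_{j}\prod_{k\neq j}(t_{k}-t_{j})\neq 0$, so $N$ and $D$ are coprime; the $n$ real zeros found above are all the zeros of $N$, hence $N(z)=\Gg\prod_{j}(x_{j}-z)$ and $f(z)=\Gg\prod_{j}(z-x_{j})/(z-t_{j})$, which is (\ref{prodrep}). When $\Gg=0$, $N$ has degree $n-1$ with leading coefficient $(-1)^{n-1}\sum_{j}\Gs_{j}$ and the same evaluation shows $N,D$ coprime; its $n-1$ real zeros are the $x_{j}$, so $N(z)=(\sum_{j}\Gs_{j})\prod_{j=1}^{n-1}(x_{j}-z)$, and rewriting $N/D$ in normalised product form yields (\ref{prodrep0}) with $A=\sum_{j}\Gs_{j}>0$.

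The one genuinely delicate step is the first one: showing that a rational function in $\mathfrak{S}$ has spectral measure equal to a finite sum of positive point masses, with all poles simple. This is a Stieltjes--Perron inversion argument combined with the reality of $f$ across the real axis away from its poles (property (iii)) and the inversion formula (\ref{recovery}). Once that is in hand, everything else is monotonicity and the intermediate value theorem; the only care required is the sign bookkeeping at the two unbounded components of $\bb{R}\setminus\{t_{1},\dots,t_{n}\}$, which is precisely what separates the cases $\Gg>0$ and $\Gg=0$, and the exhaustiveness of the zero count is guaranteed by comparing it against $\deg N$.
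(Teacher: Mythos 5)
Your proposal is correct, and it is more than the paper itself offers: the paper does not prove Theorem~\ref{th:prodrep} at all, but simply cites it as classical (Kac--Krein \cite{kakr74,kackr74}). Your argument is a legitimate self-contained derivation using only tools already present in the paper: Theorem~\ref{th:Stieltjes} plus the inversion formula (\ref{recovery}) to show that the spectral measure of a rational $f\in\mathfrak{S}$ is a finite sum of positive point masses at the (necessarily real, simple) poles, giving (\ref{Frep}) with its uniqueness; then strict monotonicity of $f$ on each component of $\bb{R}\setminus\{t_{1},\ldots,t_{n}\}$ and the intermediate value theorem to locate exactly one zero in each gap $(t_{j},t_{j+1})$, with the two unbounded components distinguishing the cases $\Gg>0$ and $\Gg=0$; and finally the common-denominator degree count, together with the coprimality check $N(t_{j})=\Gs_{j}\prod_{k\neq j}(t_{k}-t_{j})\neq 0$, which correctly rules out any further (in particular non-real) zeros and yields (\ref{prodrep}) and (\ref{prodrep0}) with the right normalizations ($A=\sum_{j}\Gs_{j}>0$). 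The only genuinely delicate step is the one you flag yourself---that $\Gs$ has no mass off the pole set---and your use of the distributional Stieltjes--Perron inversion, combined with reality of $f$ on $\bb{R}_{+}$ away from the poles via property (iii), handles it; an equally quick alternative is to expand the rational $f$ in partial fractions and observe that $\im f>0$ in $\bb{H}_{+}$ forces all poles to be real and simple with the coefficients $\Gs_{j}$ positive, but nothing in your route is missing or circular.
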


\section{A needle in a haystack}
\setcounter{equation}{0} 
\label{sec:needle}
In Section~\ref{sec:prelim} we have summarized a substantial body of existing
knowledge about the Stieltjes class $\mathfrak{S}$ and the closed convex cone
$V(\Bz)$. Can one harness this knowledge to devise an algorithm solving the
least squares problem (\ref{lsqV})?  Surprisingly the answer is not
apparent. What has been described so far is an interpolation algorithm for
constructing functions $f(z)$, satisfying $f(z_{j})=p^{*}_{j}$, once the
solution $\Bp^{*}$ of (\ref{lsqV}) has been found. In this section we take a
closer look at the geometry of $V(\Bz)$.
\begin{figure}[t]
  \centering
  \includegraphics[scale=0.3]{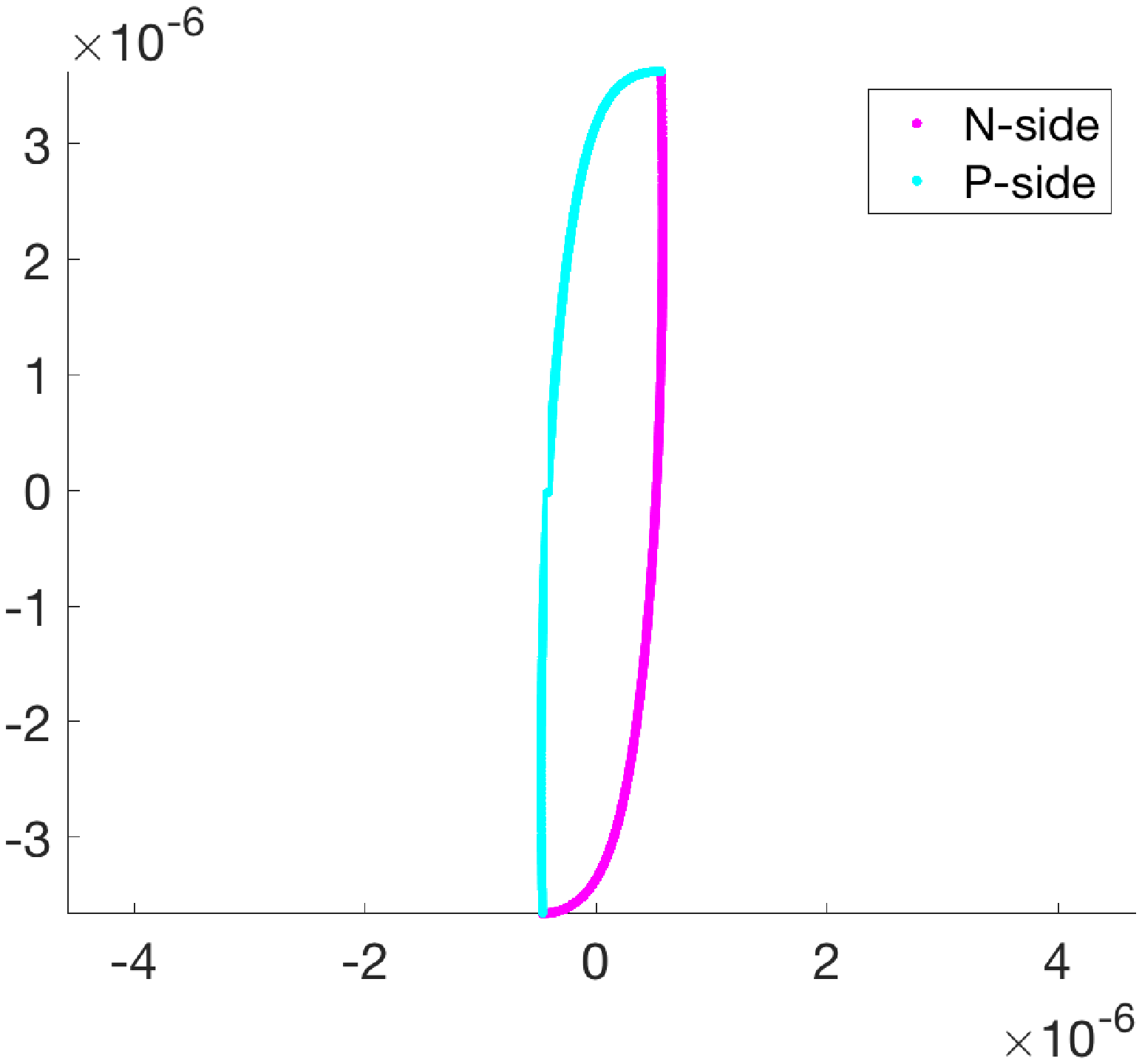}
% Made by rsch/matlab/kramers-kronig/Vofz_proj/projection/V_crossect.m
% loading data from w4Vcrossect.mat
  \caption{A random two-dimensional cross-section of $V(\Bz)$. The origin
    corresponds to $\Bw\in V(\Bz)$ and the cross-section is spanned by a random unit
    vector $\Bd\in\bb{C}^{n}$ and a normal $\Bn$ to $\Md V(\Bz)$ at the point
    where $\Bw+t\Bd$ intersects $\Md V(\Bz)$.}
  \label{fig:Vcrossect}
\end{figure}
Here will show that in effect, the set $V(\Bz)\subset\bb{C}^{n}$ has a very
small (real) dimension compared to $2n$. The proverbial needle analogy is apt
here. Even though the needle is a three-dimensional body, we can approximate
it well by an interval of a straight line. To illustrate our point we return
to our simple example (\ref{sqrtexample0}). Figure~\ref{fig:Vcrossect} shows a
two-dimensional cross-section of $V(\Bz)$, where $\Md V^{N}(\Bz)$ and $\Md
V^{P}(\Bz)$ parts of the boundary of $V(\Bz)$ are shown in magenta and cyan
and are on the left and the right side of $V(\Bz)$, respectively. The origin
in the figure is placed at $\Bw$ in the interior of $V(\Bz)$.  When we added a
2\% noise to $\Bw$, the noisy data $\Tld{\Bw}$ would lie about 25,000
thicknesses of the cross-section away. If an ordinary sawing needle is the
analogy for $V(\Bz)$, the point $\Tld{\Bw}$ would be about 25 meters away.

To see the dimensional degeneracy of $V(\Bz)$ mathematically we recall that
the rank-two displacement structure (\ref{ND}) and (\ref{PD}) of $\BN(\Bz,\Bw)$
and $\BP(\Bz,\Bw)$, respectively, implies that their eigenvalues decay exponentially
fast \cite{beto17}. Hence, numerically, these matrices will always have
eigenvalues which are indistinguishable from 0 up to the floating point
precision, when $n>15$. Thus, numerically, all points in $V(\Bz)$ will appear
to lie on its boundary.
 
The crucial point here is that the dimensional degeneracy of the geometry of
$V(\Bz)$ handily defeats typical minimization algorithms that start with some
initial guess $\Bp_{0}\in\Md V(\Bz)$ and choose the direction in which we want to
travel ``along'' $\Md V(\Bz)$ in order to make the distance to $\Bw\not\in V(\Bz)$
smaller. Indeed, even if we are travelling along one of the ``long
dimensions'' of the needle, a tiny generic perturbation of the direction of
travel will cause us to exit $V(\Bz)$ after an extremely short distance. For
example, when $n\approx 20$ our numerical experiments showed that we needed to
perform $10^{10}$ steps to make even a barely noticeable change in the
distance of $|\Bp-\Bw|$.

Graeme Milton \cite{gmpc20} suggested that since $V(\Bz)$ is a convex cone
which is dimensionally degenerate it must effectively lie in a low-dimensional
subspace of $\bb{C}^{n}$, in the same way as the needle whose point is at the
origin, effectively lies in a one-dimensional subspace of $\bb{R}^{3}$. In
order to capture this low-dimensional subspace (or rather its orthogonal
complement) we look for vectors $\BGx=(\xi_{1},\ldots,\xi_{n})\in\bb{C}^{n}$,
such that $|\BGx|=1$ and $\re(\Bw,\BGx)$ is negligibly small for all $\Bw\in
V(\Bz)$ with $|\Bw|=1$. Let us explore this idea. 

Suppose $\Gg\ge 0$ and $\Gs$
is the Stieltjes spectral measure. For given nodes $z_{j}\in\bb{H}_{+}$ we
define
\[
w_{j}[\Gs,\Gg]=\Gg+\int_{0}^{\infty} \frac{d\Gs(t)}{t-z_{j}}.
\]
We estimate
\begin{equation}
  \label{wtonorm}
|w_{j}[\Gs,\Gg]|\le\Gg+\|\Gs\|\max_{t\ge 0}\left|\frac{t+1}{t-z_{j}}\right|\le
M(z_{j})(\Gg+\|\Gs\|),
\end{equation}
where
\[
\|\Gs\|=\int_{0}^{\infty}\frac{d\Gs(t)}{t+1},\qquad M(z_{j})=\max_{t\ge 0}\left|\frac{t+1}{t-z_{j}}\right|.
\]
It is not hard to compute the constant $M(z)$ explicitly, when $\im(z)>0$, using the theory of
fractional-linear maps.
We can also derive the reverse estimate from the formulas
\[
\im(w_{j})=\im(z_{j})\int_{0}^{\infty}\frac{d\Gs(t)}{|t-z_{j}|^{2}},
\]
and
\[
\re(w_{j})=\Gg+\int_{0}^{\infty}\frac{t-\re(z_{j})}{|t-z_{j}|^{2}}d\Gs(t)=
\Gg+\int_{0}^{\infty}\left|\frac{t+1}{t-z_{j}}\right|^{2}\frac{d\Gs(t)}{t+1}
-\frac{1+\re(z_{j})}{\im(z_{j})}\im(w_{j}).
\]
Denoting
\[
m(z_{j})=\min_{t\ge 0}\left|\frac{t+1}{t-z_{j}}\right|=\min\left\{\nth{|z_{j}|},1\right\},
\]
we obtain
\begin{equation}
  \label{normtow}
m(z_{j})(\Gg+\|\Gs\|)\le\frac{\im(w_{j}z_{j})+\im(w_{j})}{\im(z_{j})}\le
\frac{|z_{j}+1|}{\im(z_{j})}|w_{j}|.
\end{equation}
Inequalities (\ref{wtonorm}) and (\ref{normtow}) imply that there exist constants $c(\Bz)$ and $C(\Bz)$, such that
\begin{equation}
  \label{equivnorm}
c(\Bz)\|\Bw[\Gs,\Gg]\|_{\infty}\le\Gg+\|\Gs\|\le C(\Bz)\|\Bw[\Gs,\Gg]\|_{\infty},
\end{equation}
where
\begin{equation}
  \label{constants}
  \|\Bw\|_{\infty}=\max_{1\le j\le n}|w_{j}|,\quad 
c(\Bz)=\min_{1\le j\le n}\nth{M(z_{j})},\quad
C(\Bz)=\min_{1\le j\le n}\left\{\frac{|z_{j}+1|}{m(z_{j})\im(z_{j})}\right\}.
\end{equation}
This means that $\Gg+\|\Gs\|$ and $\|\Bw\|_{\infty}$ are equivalent norms of
$f\in\mathfrak{S}$, given by (\ref{Stielrep}), provided $\Bw=f(\Bz)$. 

We recall that our goal is to understand how the convex set
$
V_{1}(\Bz)=V(\Bz)\cap B(\Bzr,1)
$
would look like geometrically as a subset of the $2n$-dimensional Euclidean
space $\bb{C}^{n}$. We claim that this set, which is technically of full real
dimension $2n$ is ``very flat''. To quantify just how flat it is we look for
unit vectors $\BGx\in \bb{C}^{n}$, such that $\re(\Bw,\BGx)$ is very small for
all $\Bw\in V_{1}(\Bz)$.  We compute
\[
\re(\Bw,\BGx)=\re\left(\Gg S+\int_{0}^{\infty}\sum_{k=1}^{n}
\frac{\xi_{k}}{t-\bra{z_{k}}}d\Gs(t)\right),\quad S=\sum_{k=1}^{n}\xi_{k}.
\]
Since it is the measure $d\Gs(t)/(1+t)$ that is finite it will be convenient
to rewrite the above formula as follows:
\[
\re(\Bw,\BGx)=\re\left(\Gg S+\int_{0}^{\infty}(\psi[\BGx](t)+S)\frac{d\Gs(t)}{t+1}\right),
\]
where
\[
\psi[\BGx](t)=\sum_{k=1}^{n}\frac{\xi_{k}(\bra{z_{k}}+1)}{t-\bra{z_{k}}}.
\]
Thus,
\[
|\re(\Bw,\BGx)|\le(\Gg+\|\Gs\|)|\re(S)|+\|\Gs\|\max_{t\ge 0}|\re(\psi[\BGx](t))|.
\]
Since $\Gth[\BGx](t)=\re(\psi[\BGx](t))$ is a complicated function of $t$
whose maximum is impossible to compute directly we observe that both
$\Gth[\BGx]$ and $\Gth'[\BGx]$ are in $L^{2}(0,+\infty)$ and use the
inequality
\[
\max_{t\ge 0}|\Gth(t)|^{2}\le\|\Gth\|_{1,2}^{2}=\|\Gth\|_{L^{2}(0,+\infty)}^{2}+\|\Gth'\|_{L^{2}(0,+\infty)}^{2},
\]
valid for all $\Gth\in W^{1,2}(0,+\infty)$. The inequality is sharp. It
becomes equality when $\Gth(t)=e^{-t}$.
Hence,
\begin{equation}
  \label{Vflatness}
  |\re(\Bw,\BGx)|^{2}\le 2(\Gg+\|\Gs\|)^{2}\left(\re(S)^{2}
+\|\Gth[\BGx]\|_{1,2}^{2}\right)\le 2C(\Bz)^{2}\|\Bw\|_{\infty}^{2}\BQ(\Bz)[\BGx],
\end{equation}
where
\[
\BQ(\Bz)[\BGx]=\re(S)^{2}+\|\Gth[\BGx]\|_{1,2}^{2}
\] 
is a positive definite real quadratic form in $\BGx$ and $C(\Bz)$ is given in
(\ref{constants}), in accordance with (\ref{normtow}). Let
$\Gl_{1}>\Gl_{2}>\ldots>\Gl_{2n}>0$ be the eigenvalues of $\BQ(\Bz)$. For each
$\Gd_{m}=C(\Bz)\sqrt{2\Gl_{m+1}}$ taken as the ``negligibility threshold'', we
can regard $m$ as the effective dimension of $V(\Bz)$, since the
$2n-m$-dimensional span $\CW_{m}$ of all eigenvectors of $\BQ(\Bz)$ corresponding to
eigenvalues $\Gl_{k}$, $k>m$ is effectively orthogonal to $V(\Bz)$. Indeed, 
for any $\BGx\in \CW_{m}$ and
any $\Bw\in V_{1}(\Bz)$ we have the inequality\footnote{Obviously, the estimate
  holds in a larger convex subset $V(\Bz)\cap B_{\infty}(\Bzr,1)$ of $V(\Bz)$,
  where $B_{\infty}$ denotes a ball in $\|\cdot\|_{\infty}$ norm of
  $\bb{C}^{n}$.} $|\re(\Bw,\BGx)|\le\Gd_{m}$. For the example
(\ref{sqrtexample0}) the quadratic form is of full rank, its 40 eigenvalues
decreasing from $\Gl_{1}\approx 3.37\cdot 10^{8}$ to $\Gl_{40}=4.73\cdot
10^{-5}$. If the number of data points increases to 40: 
$
z_{j}=ie^{0.01+0.5j},\ j=0,1,\ldots,39
$
Then numerical rank of the $80\times 80$ matrix $\BQ(z)$ is 56. It also
remains 56 for the $100\times 100$ matrix $\BQ(z)$, corresponding to
$
z_{j}=ie^{0.01+0.4j},\ j=0,1,\ldots,49.
$
These results show that the theoretical bound (\ref{Vflatness}) is fairly
conservative and overestimates the perceived dimension of $V(\Bz)$ quite a
bit. 

The quadratic form $\BQ(\Bz)$ is not hard to compute explicitly using the
residue formula
\begin{equation}
  \label{resform}
  \int_{0}^{\infty}R(x)dx=-\sum_{r=1}^{N}{\rm Res}[R(z)\ln(-z),z=p_{r}],
\end{equation}
where $R(z)$ is a rational function  with at least $1/|z|^{2}$ decay at infinity and
poles $p_{r}$ none of which lie on $[0,+\infty)$.
Even with the exact formula for $\BQ(\Bz)$, the accurate computation of its
eigenvalues requires many more digits of precision than the floating point
allows even for $n=20$. In our examples we have used the Advanpix
Multiprecision Computing Toolbox for MATLAB (www.advanpix.com) using 200
digits of precision.
% See ~/rsch/kramers-kronig/Stieltjes/MPStieltjes/Qofz/

% \textbf{Figure??? shows the two dimensional cross-section of $V(\Bz)$ for the
%   same example (\ref{sqrtexample}) after orthogonally projecting onto
%   $\CW_{m}^{\perp}$, when $\Gd_{m}=???$}\\
% \textbf{Argue that either $W_{\Ge}$ neglects too much of $V(\Bz)$ or the problems we
% describe persist}

\section{The least squares algorithm}
\setcounter{equation}{0} 
\label{sec:lsq} 
In this section we describe the algorithm that solves the least squares
problem (\ref{lsqV}), displays the graph of the Caprini function $C(t)$
certifying that the minimum in (\ref{lsqV}) has indeed been reached (see
Theorem~\ref{th:Caprini}), and exhibit the ``uncertainty band'' where the
least squares minimizer might belong for different realizations of the random
noise in the data.

The first step in the algorithm is to replace $V(\Bz)$ by a much simpler
object: the positive span of an \emph{ad-hoc basis} of $V(\Bz)$.
\begin{definition}
  \label{def:adhocbasis}
An ad-hoc basis of $V(\Bz)$ is a finite set of positive spectral measures
$\mathfrak{B}=\{\Gs_{1},\ldots,\Gs_{N}\}$, whereby $V(\Bz)$ is replaced by
\begin{equation}
  \label{VBz}
V_{\mathfrak{B}}(\Bz)=\left\{\Bw\in\bb{C}^{n}:w_{j}=x_{0}+
\sum_{\Ga=1}^{N}x_{\Ga}\phi_{\Ga}(z_{j}),\ j=1,\ldots,n,\ x_{\Ga}\ge 0,\
\Ga=0,\ldots,N\right\},
\end{equation}
where
\[
\phi_{\Ga}(z)=\int_{0}^{\infty}\frac{d\Gs_{\Ga}(t)}{t-z},\ \Ga=1,\ldots,N.
\]
\end{definition}
The adjective ``ad-hoc'' indicates that our choice of the basis $\mathfrak{B}$
is nothing more than an educated guess, and other choices could be at least as
effective as our choice. The choice that appears to work well consists of
\begin{itemize}
\item measures $\Gd_{\tau}(t)$---unit point mass at $t=\tau$, where $\tau$ is either
the real or the imaginary part of one of $z_{j}$ for some $j$,
\item measures $\chi_{[s_{1},s_{2}]}(t)dt$, where $s_{1}$ and $s_{2}$ is either one
of the $\tau$s or a mid-point between adjacent $\tau$s.
\end{itemize}
We will denote this construction of an ad-hoc basis by $\mathfrak{B}(\BGt)$,
where $\BGt$ stands for a list of $\tau$'s used in the above construction.

Imagining $V(\Bz)$ as a needle explains why the choice of an ad-hoc basis can
be fairly arbitrary. Indeed, selecting a point $\Bw_{0}$ at random inside a
needle and replacing the needle with the ray $\{s\Bw_{0}:s\ge 0\}$ gives a
fairly accurate representation of the needle. The more accurately we want to
approximate $V(\Bz)$ the more important the choice of an ad-hoc basis
becomes. Our choice above is just an attempt to tie the ad-hoc basis to the
data in a somewhat natural and algorithmic fashion. Many existing algorithms
(e.g., \cite{bouk15,wscc15}) make an effort of choosing a better basis, but in
the absence of any rigorous approximation error analysis they also remain
largely ad-hoc. In the new algorithm the ad-hoc basis is only needed as a
stepping stone for the construction of a much better basis tailor-made for the
specific experimental data.

Once the above ad-hoc basis has been chosen, we compute
\[
p_{j}(\Bx)=x_{0}+\sum_{\Ga=1}^{N}x_{\Ga}\phi_{\Ga}(z_{j}),\quad j=1,\ldots,n,
\]
by solving the nonnegative least squares problem
\begin{equation}
  \label{lsqnonneg}
  \min_{\Bx\ge 0}|\Bp(\Bx)-\Bw|^{2}.
\end{equation}
The above least squares problem is solved by a well-established and
widely implemented nonnegative least squares algorithm \cite{laha95}.
\begin{figure}[t]
  \centering
  \includegraphics[scale=0.3]{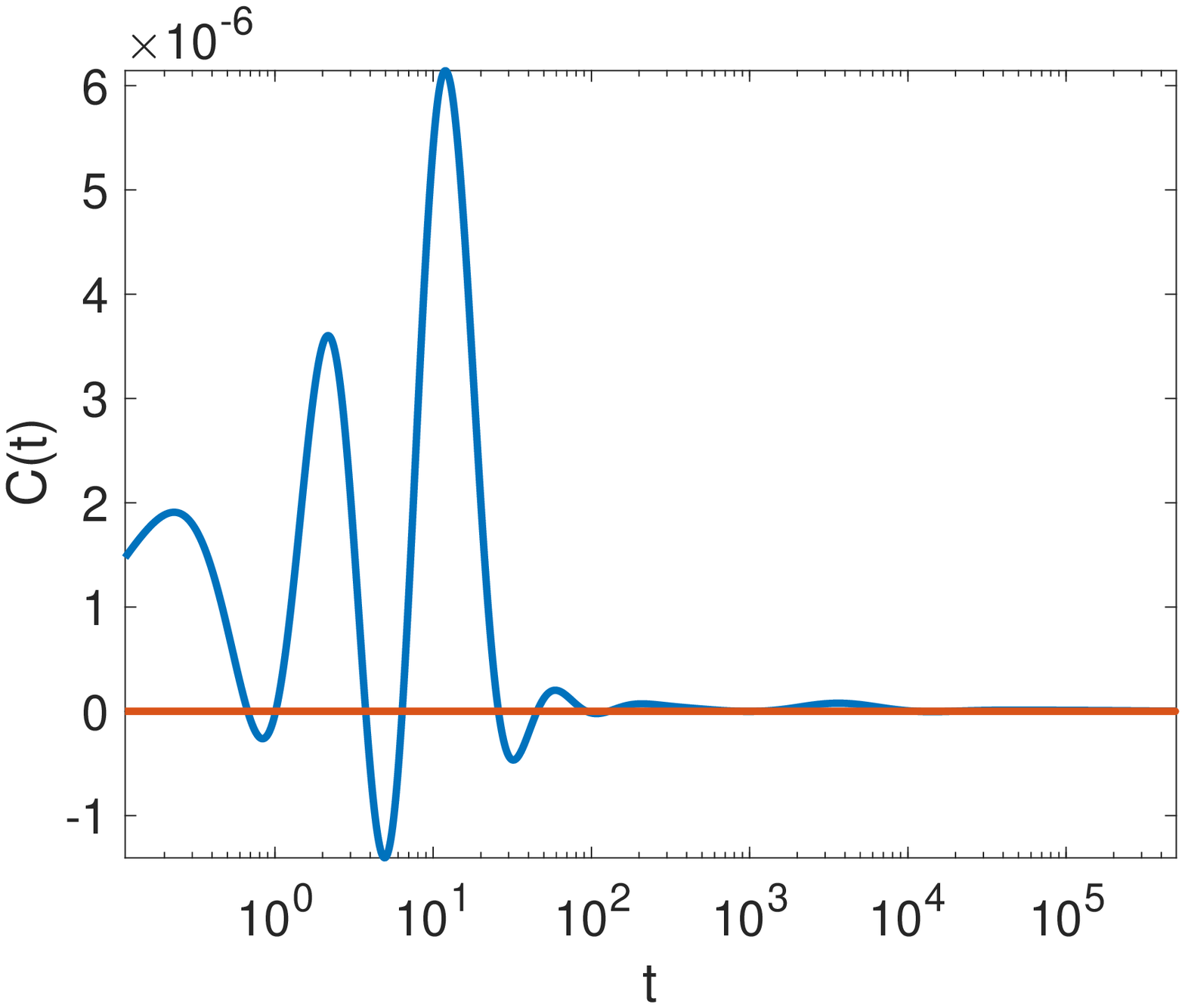}\hspace{10ex}
\includegraphics[scale=0.3]{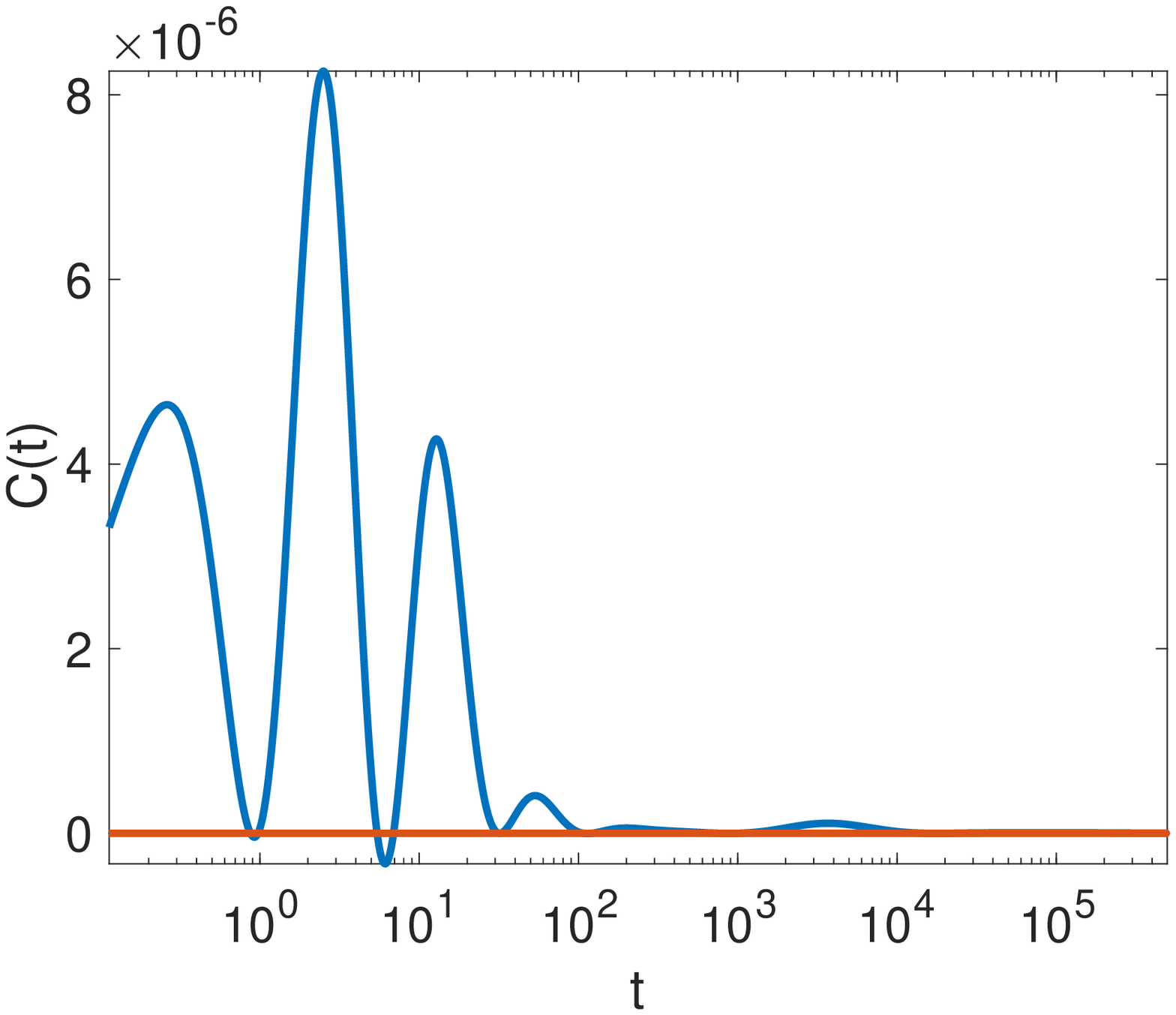}
% Made by rsch/matlab/kramers-kronig/Vofz_proj/Caprini0_plot.m
\caption{The Caprini function for the ad-hoc (left) and for the
  Caprini-augmented (right) bases projections.}
  \label{fig:Caprini0}
\end{figure}

Naturally, we would like to know how good our ad-hoc approximation is. For
illustration we once again turn to our simple example (\ref{sqrtexample0}). We
use the same noisy version $\Tld{\Bw}$ of $\Bw$ as in the example of
Figure~\ref{fig:Vcrossect}. The optimality conditions described in
Theorem~\ref{th:Caprini} require the Caprini function $C(t)$ to be nonnegative
and equal to zero on the support of the spectral measure. The graph of $C(t)$
shown in the left panel of Figure~\ref{fig:Caprini0} suggests that we are not
too far away from the true minimum, but are not there yet. Had we hit the
minimum exactly, the local minima of $C(t)$ would also be both the global
minima and the zeros of $C(t)$, and would comprise the support of the optimal
spectral measure $\Gs(t)$. This observation leads to the next step in our
algorithm: we add the points of local minima of $C(t)$ to the list of $\tau$'s
in our ad-hoc basis $\mathfrak{B}(\BGt)$ and recompute $\Bp(\Bx)$, solving
(\ref{lsqnonneg}) using the augmented ad-hoc basis $\mathfrak{B}(\BGt_{\rm
  aug})$ for $V(\Bz)$. The Caprini function for the new approximation is shown
in the right panel of Figure~\ref{fig:Caprini0}. We see both the substantial
improvement and the fact that the new approximation $\Bp^{*}$ is still not the
true minimum in (\ref{lsqV}). We can repeat this step by adjoining the local
minima of the improved Caprini function in the right panel of
Figure~\ref{fig:Caprini0} to the list of $\tau$'s. The improvement after the
second application of the augmentation of the ad hoc basis is significantly
smaller, and more repetitions no longer lead to discernible improvements.

To achieve certifiable optimality we cheat by ``moving the goalposts''. In the
author's experience the Caprini function is very sensitive to even the tiniest
deviations from the true optimum. The idea is to exploit this sensitivity and
achieve optimality by means of making negligible changes, but not in
$\Bp^{*}$, which is required to be in $V(\Bz)$. Changing $\Bw$ instead
of $\Bp^{*}$ leads to a \emph{linear problem}! We therefore look
for the \emph{alternative data} $\Tld{\Bw}$ near $\Bw$, so that the same $\Bp^{*}$ is a
true minimizer in (\ref{lsqV}), where $\Bw$ is replaced by $\Tld{\Bw}$, and
where $\Tld{\Bw}$ is computed by requiring that the local minima $t_{j}$ of
the original $C(t)$ satisfy equations (\ref{Copteqs}). In other words we are
looking for the vector $d\Bw\in\bb{C}^{n}$ of smallest norm, satisfying the
following equations:
\begin{equation}
  \label{Caprmod}
  \begin{cases}
\displaystyle
      \re\sum_{j=1}^{n}\frac{dw_{j}}{(t_{k}-\bra{z_{j}})^2}=0,\\
\displaystyle
      \re\sum_{j=1}^{n}\frac{dw_{j}}{t_{k}-\bra{z_{j}}}=C(t_{k}),\\
  \end{cases}\quad k=1,\ldots,N.
\end{equation}
If we want to enforce $\Gg>0$ condition we need to add the equation
\begin{equation}
  \label{gammapos}
  \re\sum_{j=1}^{n}dw_{j}=\re\sum_{j=1}^{n}(p_{j}-w_{j}).
\end{equation}
If $C(0)<0$ for the original data we add $t=0$ to the support of the spectral
measure $\Gs$ and require
\begin{equation}
  \label{zeronode}
  \re\sum_{j=1}^{n}\frac{dw_{j}}{\bra{z_{j}}}=-C(0).
\end{equation}
Vector $d\Bw$ can then be computed using the least norm least squares solver.

\begin{figure}[t]
  \centering
\includegraphics[scale=0.31]{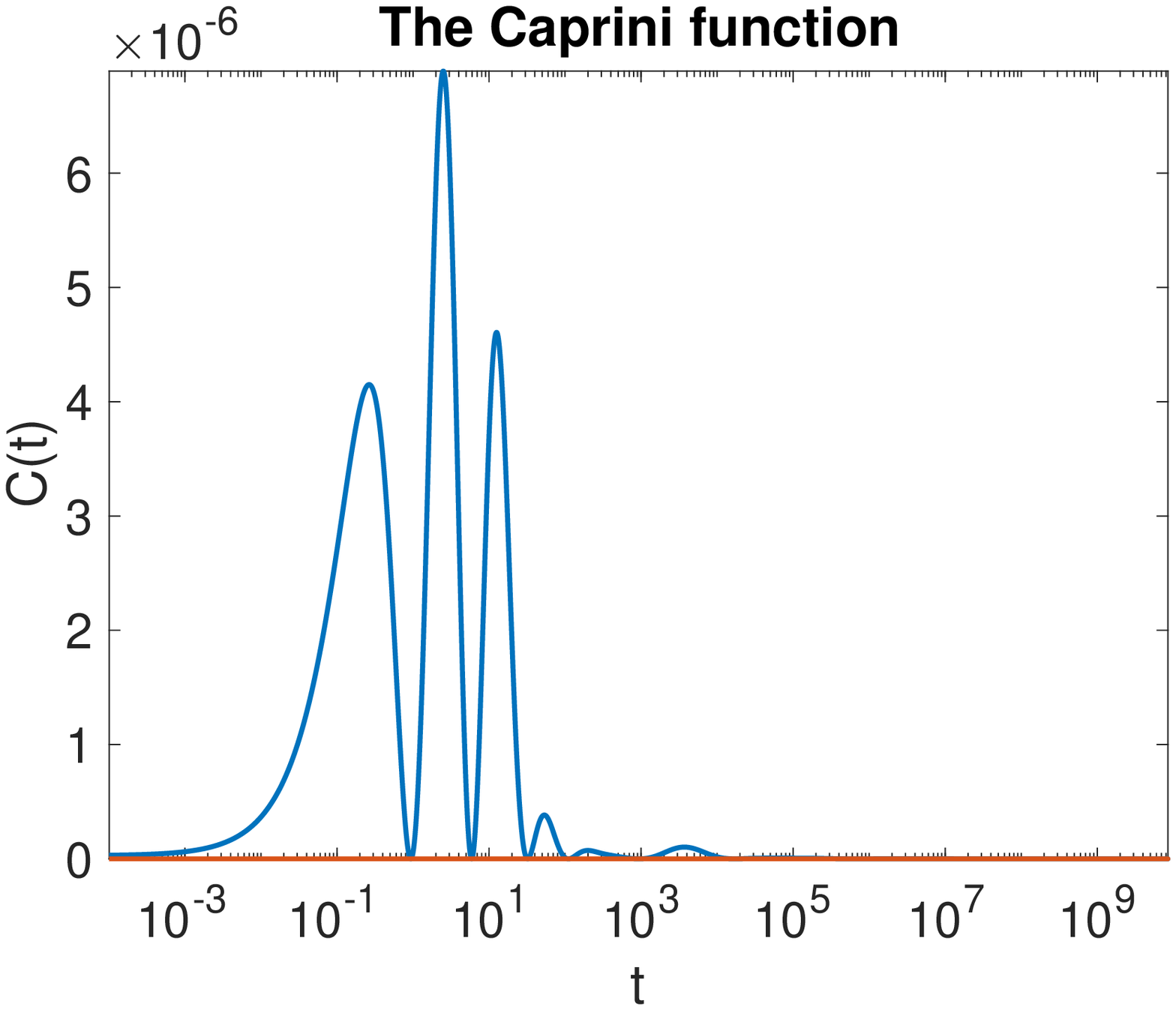}\hspace{10ex}
\includegraphics[scale=0.3]{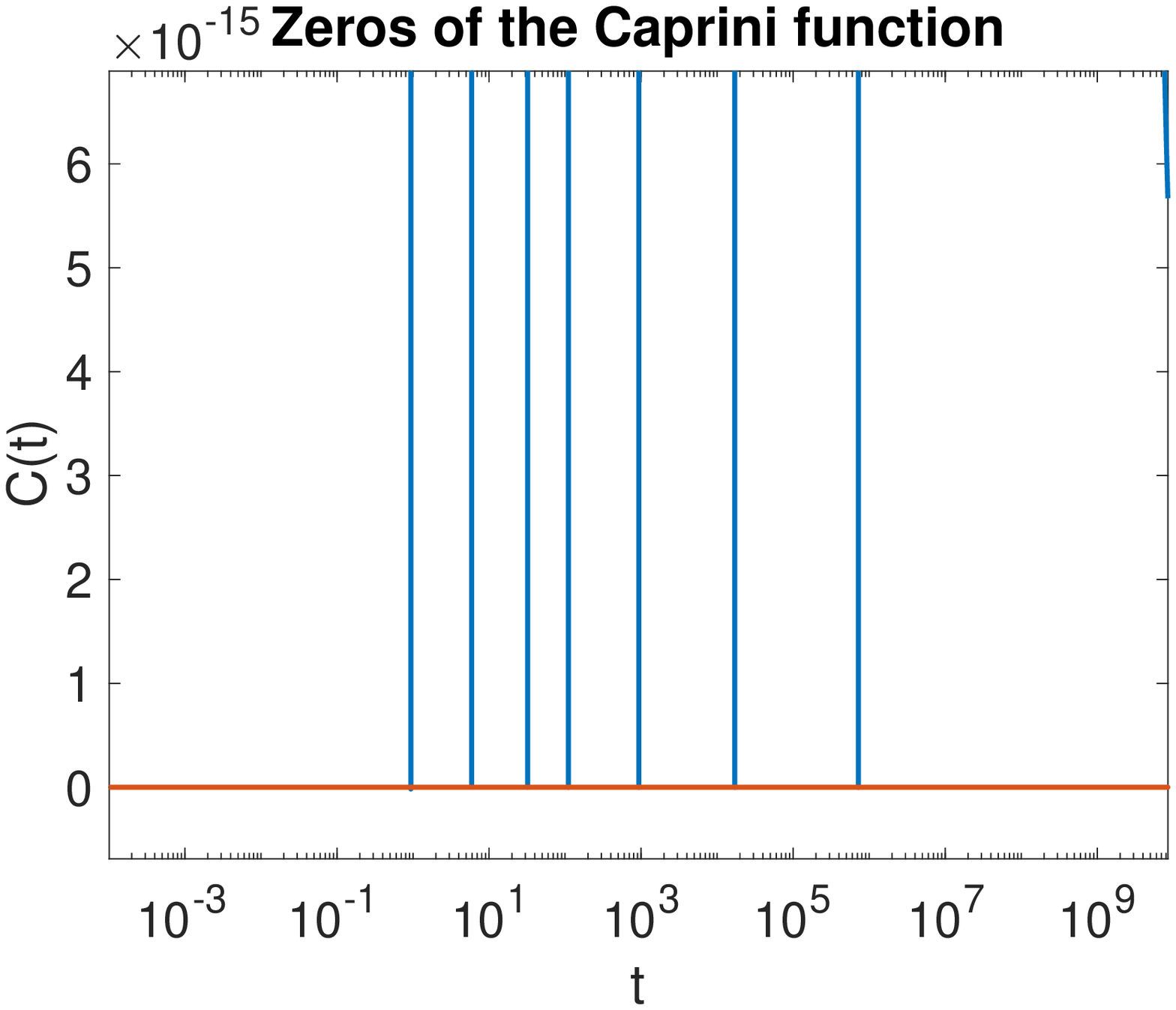}
% Made by rsch/kramers-kronig/Stieltjes/FORTRAN/Caprini0_plot.m and Voigt.f
  \caption{Achieving optimality for the ``alternative data''.}
  \label{fig:FullRun}
\end{figure}
Our simulations show that the ``alternative data'' $\Tld{\Bw}=\Bw+d\Bw$ is
indeed sufficiently close to the actual data to justify replacing one with the
other. In other words, if we regard $\Bw$ to be equal to $\Bp^{*}$ plus random
measurement errors, then $\Tld{\Bw}$ is also equal to $\Bp^{*}$ plus a
different realization of random measurement errors. At the same time the
Caprini function for the alternative data $\Tld{\Bw}$ in
Figure~\ref{fig:FullRun} shows that our formerly imperfect solution $\Bp^{*}$
of (\ref{lsqV}) is now optimal to within the computer precision\footnote{The
  right graph's vertical scale in Figure~\ref{fig:FullRun} is $10^{-9}$ times
  the right graph's vertical scale}, while $|\Bw-\Tld{\Bw}|/|\Bw|\approx
6.5\cdot10^{-4}$, where $\Bw$ is given by (\ref{sqrtexample0}) plus 2\% noise. 

On rare occasions during the algorithm testing the change from $\Bw$ to
$\Tld{\Bw}$ caused a point of local minimum $t=t_{j}$ of the original $C(t)$
to become a point of local maximum of the modified $C(t)$, while creating two
new points of local minima to the right and to the left of $t_{j}$. If the new
local minima are non-negligibly negative, then we update the list of local
minima of $C(t)$ and apply the same ``alternative data'' procedure to
$\Tld{\Bw}$, solving (\ref{Caprmod})--(\ref{zeronode}) again. In our numerical
tests no more than two iterations of ``data-fixing'' was ever necessary to
bring the graph of $C(t)$ into the desired shape.

In order to capture all local minima of $C(t)$ on $[0,+\infty)$ we
observe that $C(t)$ will be a monotone function on $[T,+\infty)$ for
sufficiently large $T$. Let us estimate the value of $T$. We will assume that
$\Gg>0$ and therefore
\[
\re\sum_{j=1}^{n}\Gd_{j}=0,\qquad\Gd_{j}=p_{j}-w_{j}.
\]
In this case we can write $C'(t)=D_{\infty}(t)+O(t^{-4})$, as $t\to\infty$, where
\[
D_{\infty}(t)=-\frac{2}{t^{3}}\re\sum_{j=1}^{n}\Gd_{j}\bra{z_{j}}.
\]
Estimating $|C'(t)-D_{\infty}(t)|$, it is not hard to show that
\begin{equation}
  \label{Cprest}
  |C'(t)-D_{\infty}(t)|<|D_{\infty}(t)|,\quad\forall t>T=(M_{0}+1)\max_{1\le j\le n}|z_{j}|,
\end{equation}
where
\[
M_{0}=\frac{2\sum_{j=1}^{n}|\Gd_{j}||z_{j}|}{|\re\sum_{j=1}^{n}\Gd_{j}\bra{z_{j}}|}.
\]
Inequality (\ref{Cprest}) shows that $C'(t)$ cannot be 0 when $t>T$. Hence, if
we want to make sure that we missed no local minima of $C(t)$ we need to
examine it only on the finite interval $[0,T]$.

In order to construct the function $f\in\mathfrak{S}$ satisfying
$f(\Bz)=\Bp^{*}$ we run the recursive interpolation algorithm described in
Section~\ref{sub:intalg}. In practice, even though matrices $\BN(\Bz,\Bp^{*})$ and
$\BP(\Bz,\Bp^{*})$ have no numerically significant negative eigenvalues,
feasibility gets lost after a number of iterations due to the amplification of
round-off errors. This may happen even when $n$ is as small as 10. When
this occurs, we replace the currently infeasible data $\Bw$ by its
``projection'' $\Bp^{*}$ as described above and continue the recursion using
the projected feasible data. 

Finally, our algorithm tries to estimate the degree of uncertainty of the
output. If we regard the discrepancies $w_{j}-f_{*}(z_{j})$ as a random noise,
then the fact that the measured values $w_{j}$ are exactly what they are is in part an
outcome of a random event. Simulating normal random noise with standard
deviation
\[
\rho^{2}=\nth{2n-1}\sum_{j=1}^{n}|w_{j}-f_{*}(z_{j})|^{2}
\]
we produce other ``realizations'' of the error of measurement, each of which
leads to its own least squares solution $f_{*}(z)$. Plotting these functions
for 500 different realizations of the random noise gives one an idea of the
degree to which we can trust the output of the algorithm. These potential
realizations are shown in grey in Figures~\ref{fig:Voigt} and
\ref{fig:Voigt_minus}. While in \cite{grho-annulus,grho-gen} we estimated the
\emph{worst case} error of extrapolation, these Monte-Carlo simulations are a
simple and direct way to estimate the uncertainty for \emph{specific
  data}. The use of Monte-Carlo simulations to exhibit the uncertainty in the
analytic continuation due to the statistical errors in the data has also been
used in particle physics \cite{acdi16}.

\section{Direct computation of spectral measure}
\setcounter{equation}{0} 
\label{sec:SM}
While the interpolation algorithm computes values $f(\Gz)$ for any specified
list of points $\Gz$ in the upper half-plane, one would also want to have an
explicit formula for $f(z)$. The goal of this section is to describe an
algorithm for computing the spectral representation (\ref{Frep}) of the
function $f\in\mathfrak{S}$ satisfying $f(\Bz)=\Bp$. The algorithm computes
this representation recursively following the algorithm described in
Section~\ref{sub:intalg}. It is based on the following theorem
\begin{theorem}
  \label{th:SM}
Suppose
\[
g(z)=\Gg_{g}-\frac{\Gs_{0}}{z}+\sum_{j=1}^{n}\frac{\Gs_{j}}{t_{j}-z},\qquad
\Gg_{g}\ge 0,\ \Gs_{0}\ge 0,\ \Gs_{j}>0,\
0< t_{1}<t_{2}<\dots<t_{n},
\]
Suppose $f(z)$ is given by (\ref{FviaM0}). Then
\[
f(z)=\Gg_{f}-\frac{\nu_{0}}{z}+\sum_{j=1}^{n+1}\frac{\nu_{j}}{\tau_{j}-z},
\]
where
\begin{equation}
  \label{gammanu}
  \Gg_{f}=\frac{\Gg_{*}\Gg_{g}}{\Gg_{g}+1},\qquad
\nu_{0}=\frac{\Gs_{0}\Gs^{*}}{\Gs_{0}+t_{*}},
\end{equation}
and
\[
0<\tau_{1}<t_{1}<\tau_{2}<t_{2}<\dots<t_{n}<\tau_{n+1}.
\]
\end{theorem}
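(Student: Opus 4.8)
My plan is to read off $\Gg_f$ and $\nu_0$ from the behaviour of the parametrization (\ref{FviaM0}) at $z=\infty$ and $z=0$, and then to locate the remaining poles by analysing its denominator as a real function. The value $\Gg_f=\Gg_*\Gg_g/(\Gg_g+1)$ is already recorded in (\ref{gammas}). For $\nu_0$ I would use that $zg(z)\to-\Gs_0$ as $z\to0$, so $g(z)\sim-\Gs_0/z$ there; then the numerator of (\ref{FviaM0}) behaves like $\Gs_0\Gs^*/z$ while its denominator tends to $-(\Gs_0+t_*)$, whence $\lim_{z\to0}zf(z)=-\Gs_0\Gs^*/(\Gs_0+t_*)$, and since $\Gs^*>0$ and $t_*>0$ this is precisely the residue of $-f$ at $0$, giving $\nu_0=\Gs_0\Gs^*/(\Gs_0+t_*)$ (and correctly $\nu_0=0$, i.e.\ no pole at $0$, when $\Gs_0=0$). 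Everything else is about the poles $\tau_j$.

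The key object is the denominator $D(z)=zg(z)+z-t_*=z(g(z)+1)-t_*$ of (\ref{FviaM0}). Since $\mathfrak{S}$ is closed under adding a nonnegative constant, $g+1\in\mathfrak{S}$, so $z\mapsto z(g(z)+1)$ is a Nevanlinna function by Theorem~\ref{th:SviaR}, and hence so is $D$. Expanding $z\,\Gs_j/(t_j-z)=-\Gs_j+\Gs_jt_j/(t_j-z)$ gives the explicit form
\[
D(z)=(\Gg_g+1)z-\Big(\Gs_0+\sum_{j=1}^{n}\Gs_j+t_*\Big)+\sum_{j=1}^{n}\frac{\Gs_jt_j}{t_j-z},
\]
a rational function whose only poles are the simple poles $t_1,\dots,t_n$ — the factor $z$ annihilates the pole of $g$ at the origin — with positive leading coefficient $\Gg_g+1\ge1$ and positive residue data $\Gs_jt_j$; clearing denominators shows its numerator polynomial has degree exactly $n+1$. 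In particular $D'(z)=(\Gg_g+1)+\sum_j\Gs_jt_j/(t_j-z)^2\ge1$ on $\bb{R}\setminus\{t_1,\dots,t_n\}$, so $D$ is strictly increasing on each component of that set.

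Next I would trace $D$ along the real line: $D(0)=-(\Gs_0+t_*)<0$ (as $t_*>0$ by (\ref{strictNP1})), $D(t)\to+\infty$ as $t\uparrow t_j$ and $D(t)\to-\infty$ as $t\downarrow t_j$, and $D(t)\to\pm\infty$ as $t\to\pm\infty$. Strict monotonicity between consecutive poles then yields exactly one zero $\tau_k$ in each of the $n+1$ intervals $(0,t_1),(t_1,t_2),\dots,(t_{n-1},t_n),(t_n,+\infty)$ and none in $(-\infty,0]$ (where $D\le D(0)<0$); since $D$ has only $n+1$ zeros, these are all of them, simple and real, giving the chain $0<\tau_1<t_1<\tau_2<\dots<t_n<\tau_{n+1}$. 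To conclude that these $\tau_j$, together with $0$, are precisely the poles of $f$ — i.e.\ that no zero of $D$ is cancelled by a zero of the numerator of (\ref{FviaM0}) — I would invoke Theorem~\ref{th:degred} applied to the inverse relation (\ref{Mviaf}): $g$ is rational of degree $n+1$ (degree $n$ when $\Gs_0=0$), so $f$ is rational of degree $n+2$ (resp.\ $n+1$); but the simple pole at $0$ (present precisely when $\Gs_0>0$) and the simple poles $\tau_1,\dots,\tau_{n+1}$ already account for that many poles, so no cancellation is possible. Finally, $f\in\mathfrak{S}(z_1,w_1)\subset\mathfrak{S}$ is rational, so its canonical form from Theorem~\ref{th:prodrep} must be exactly $f(z)=\Gg_f-\nu_0/z+\sum_{j=1}^{n+1}\nu_j/(\tau_j-z)$ with all $\nu_j>0$, completing the proof.

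The step I expect to be the main obstacle is that last one: certifying that every zero of $D$ survives as a genuine pole of $f$, while keeping the degree bookkeeping honest in the degenerate cases $\Gs_0=0$ and $\Gg_g=0$. Using the degree-reduction Theorem~\ref{th:degred} lets one avoid a direct and unpleasant computation of the residues $\big(g(\tau_k)(\Gg_*\tau_k-\Gs^*)-\Gs_*\big)/D'(\tau_k)$; the only alternative I see is to prove by hand that the numerator of (\ref{FviaM0}) cannot vanish at a zero of $D$, which looks considerably more painful.
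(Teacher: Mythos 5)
Your proposal is correct and follows essentially the same route as the paper: read off $\Gg_f$ and $\nu_0$ from the limits at $\infty$ and $0$, locate zeros of the denominator $zg(z)+z-t_*$ through its sign changes on $(0,t_1),(t_1,t_2),\ldots,(t_n,+\infty)$, and use the degree count via Theorem~\ref{th:degred} to exclude cancellation (your strict monotonicity of $D$ is a nice refinement; the paper gets exactness from the same degree argument). The only step you leave implicit, which your pole bookkeeping needs and which the paper verifies in one line, is that the poles $t_j$ of $g$ are removable singularities of $f$, since $\lim_{z\to t_j}f(z)=(\Gg_* t_j-\Gs^*)/t_j$ is finite, so the candidate poles of $f$ really are only $0$ and the zeros of the denominator.
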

\begin{proof}
  Formulas (\ref{gammanu}) are obtained by taking limits of $f(z)$ as
  $z\to\infty$ and $zf(z)$ as $z\to 0$ using formula (\ref{FviaM0}). We have
  also proved in Theorem~\ref{th:degred} that the degree of $f(z)$ is exactly
  1 higher than $g(z)$. Thus, proving that the intervals
  $(0,t_{1}),(t_{1},t_{2}),\ldots,(t_{n},+\infty)$ contain at least one pole
  of $f(z)$ would imply that these intervals must contain exactly one
  pole. Formula (\ref{FviaM0}) shows that the poles of $f(z)$ can only come
  either from the poles of $g(z)$ or from the zeros of the denominator
\[
\phi(z)=zg(z)+z-t_{*}.
\]
It is easy to compute that
\[
\lim_{z\to t_{j}}f(z)=\frac{\Gg_{*}t_{j}-\Gs^{*}}{t_{j}}\not=\infty.
\]
Hence, only the zeros of $\phi(z)$ can be the positive poles of $f(z)$. The
existence of zeros $\tau_{j}$ in the indicated intervals follows from the
following observations
\[
\lim_{x\to 0^{+}}\phi(x)=-\Gs_{0}-t_{*}<0,\quad
\lim_{x\to t_{j}^{\pm}}\phi(x)=\mp\infty,\quad\lim_{x\to+\infty}\phi(x)=+\infty.
\]
\end{proof}
Once the intervals containing single zeros of $\phi(x)$ are isolated, the zeros can be
computed using the standard zero finding algorithm \cite{brent73,fmm77}. We
only need to derive the upper bound for the last pole $\tau_{n+1}$. We observe
that all functions
\[
R_{j}(x)=\frac{x\Gs_{j}}{t_{j}-x},\quad j=1,\ldots,n
\]
are monotone increasing on $(t_{n},+\infty)$. Thus, when $x\ge 2t_{n}$ we have
\[
R_{j}(x)\ge-\frac{2t_{n}\Gs_{j}}{2t_{n}-t_{j}}\ge-2\Gs_{j}.
\]
Therefore,
\[
\phi(x)=(\Gg_{g}+1)x-t_{*}-\Gs_{0}+\sum_{j=1}^{n}R_{j}(x)\ge(\Gg_{g}+1)x-t_{*}-2\sum_{j=0}^{n}\Gs_{j}.
\]
We conclude that $\phi(x)>0$ when $x>T_{\max}$, where
\[
T_{\max}=\max\left\{2t_{n},(\Gg_{g}+1)^{-1}\left(t_{*}+2\sum_{j=0}^{n}\Gs_{j}\right)\right\}.
\]
The spectral representation of $f(z)$ is then computed recursively, using
(\ref{FviaM0}), with the
explicit formula in the case when
$
g(z)=\Gg_{g}-\Gs_{0}/z:
$
\begin{equation}
  \label{exitSM}
  f(z)=\frac{\Gg_{*}\Gg_{g}}{\Gg_{g}+1}-\frac{\Gs_{0}\Gs^{*}}{(\Gs_{0}+t_{*})z}
+\frac{\nu_{1}}{\tau_{1}-z},
\end{equation}
where
\[
\tau_{1}=\frac{\Gs_{0}+t_{*}}{\Gg_{g}+1},\quad
\nu_{1}=\frac{\Gs^{*}\Gg_{g}+\Gs_{*}+\Gg_{*}\Gs_{0}}{\Gg_{g}+1}
-\frac{\Gg_{*}\Gg_{g}(\Gs_{0}+t_{*})}{(\Gg_{g}+1)^{2}}-\frac{\Gs_{0}\Gs^{*}}{\Gs_{0}+t_{*}}.
\]
In our numerical simulations the values of $f_{*}(z)$ at specified points
computed from the spectral representation of $f_{*}(z)$ are indistinguishable
(graphically) from the values computed using the recursion algorithm from
Section~\ref{sub:intalg}.

\section{Case study: Electrochemical impedance spectroscopy}
\setcounter{equation}{0} 
\label{sec:Voigt}
Electrochemistry studies electrical behavior of systems where the motion of
charges occurs not only due to the applied electric field but also due to
chemical reactions that occur on sometimes vastly different time scales. One
of the key characteristics of such systems is the electrochemical impedance
spectrum $Z(\Go)$ that has the meaning of resistance to an applied sinusoidal
current. Combining the sine and cosine function into a complex exponential the
steady response of such system to the current $I(t)=e^{i\Go t}$ is the voltage
$U(t)=R(\Go)e^{i(\Go t+\phi(\Go))}$. The resistance $R(\Go)$ and the phase shift
$\phi(\Go)$ are combined into a single complex valued function
$Z(\Go)=R(\Go)e^{i\phi(\Go)}$---the electrochemical impedance spectrum
(EIS). The theory of electrochemical cells, including batteries, electrodes
and electrolytes \cite[2.1.2.3]{bamc05} says that $Z(\Go)$ has the spectral
representation
\begin{equation}
  \label{EIS}
  Z(\Go)=R_{\infty}+\int_{0}^{\infty}\frac{d\Gs(\tau)}{1+i\Go\tau},\qquad
\int_{0}^{\infty}\frac{d\Gs(\tau)}{1+\tau}<+\infty,
\end{equation}
where $\Gs$ is a positive Borel-regular measure on $[0,+\infty)$, called the
distribution of relaxation times (DRT). This formula shows that if $Z(\Go)$ is EIS,
then $Z(\Go)=f(-i\Go)$ for some $f\in\mathfrak{S}$. It is also a continuum
version of the complex impedance of an electrical circuit made of a series of
Voigt elements, each being a resistor and a capacitor connected in parallel.
\begin{definition}
  A Voigt circuit is an electrical circuit made of finitely many resistors and
  capacitors.
\end{definition}
The following theorem has long been known \cite{fost24,cauer26,cauer29} (see
also \cite[Statement~2, p.~196, Vol.~1]{cauer58}).
\begin{theorem}
  \label{th:VoigtZ}
The complex impedance functions $Z(\Go)$ of Voigt circuits are in one-to-one
correspondence with rational Stieltjes functions $f\in\mathfrak{S}_{\CR}$ via
$
Z(\Go)=f(-i\Go).
$
\end{theorem}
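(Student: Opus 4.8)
The plan is to establish the claimed one-to-one correspondence as a pair of set inclusions; injectivity of the map $f\mapsto f(-i\,\cdot\,)$ is automatic, since the values $f(-i\omega)$ along the real $\omega$-axis determine $f$ on a line and hence, by analyticity, on all of $\bb{C}\setminus\bb{R}_{+}$. (Here $\mathfrak{S}_{\CR}$ denotes the rational functions in $\mathfrak{S}$.) Thus it suffices to prove: (a) the impedance function of every Voigt circuit has the form $Z(\omega)=f(-i\omega)$ with $f\in\mathfrak{S}_{\CR}$; and (b) every $f\in\mathfrak{S}_{\CR}$ arises in this way.

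For (a) I would use nodal analysis. Choosing one terminal of the circuit as ground and eliminating it, the reduced nodal admittance matrix at frequency $\omega$ is $\BG+i\omega\BC$, where $\BG$ (assembled from the resistor edges with weights $1/R$) and $\BC$ (assembled from the capacitor edges with weights $C$) are real symmetric positive semidefinite; for a connected network $\det(\BG+i\omega\BC)\not\equiv0$. The driving-point impedance between the two terminals is $Z(\omega)=\langle(\BG+i\omega\BC)^{-1}\Be,\Be\rangle$ for a fixed real vector $\Be$ (a difference of two standard basis vectors), so the entries of $(\BG+i\omega\BC)^{-1}$ being rational in $i\omega$, the function $f(z):=\langle(\BG-z\BC)^{-1}\Be,\Be\rangle$ is rational and satisfies $Z(\omega)=f(-i\omega)$. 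When $\BC$ is positive definite, set $\Hat\BG=\BC^{-1/2}\BG\BC^{-1/2}=\sum_{j}t_{j}\Bv_{j}\Bv_{j}^{T}$ with all $t_{j}\ge0$; then
\[
f(z)=\sum_{j}\frac{\Gs_{j}}{t_{j}-z},\qquad \Gs_{j}=\bigl|\langle\BC^{-1/2}\Be,\Bv_{j}\rangle\bigr|^{2}\ge0,\quad t_{j}\ge0,
\]
which is of the form $\int_{0}^{\infty}d\Gs(t)/(t-z)$ for a nonnegative atomic measure $\Gs$ on $[0,+\infty)$, so $f\in\mathfrak{S}$ by Theorem~\ref{th:Stieltjes}, hence $f\in\mathfrak{S}_{\CR}$. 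The degenerate cases, when $\BG$ or $\BC$ is singular --- which is precisely how a value $\Gg=f(\infty)>0$ can occur --- I would settle by a limiting argument: replace $\BC$ by $\BC+\Ge\BI$ (physically, add an $\Ge$-capacitor from each node to ground), obtain $f_{\Ge}\in\mathfrak{S}_{\CR}$ by the previous step, and let $\Ge\to0^{+}$; since $f_{\Ge}\to f$ uniformly on compact subsets of $\bb{C}\setminus\bb{R}_{+}$, and $\mathfrak{S}$ is closed under such limits, while $f$ is rational throughout, $f\in\mathfrak{S}_{\CR}$.

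For (b) I would invoke Theorem~\ref{th:prodrep}: every rational $f\in\mathfrak{S}$ can be written as $f(z)=\Gg+\sum_{j=1}^{n}\Gs_{j}/(t_{j}-z)$ with $\Gg\ge0$, $\Gs_{j}>0$ and $0\le t_{1}<\dots<t_{n}$. Hence
\[
Z(\omega)=f(-i\omega)=\Gg+\sum_{j=1}^{n}\frac{\Gs_{j}}{t_{j}+i\omega}.
\]
The constant $\Gg$ is the impedance of a resistor of resistance $\Gg$; a term with $t_{j}=0$ equals $1/(i\omega C)$ with $C=1/\Gs_{j}$, the impedance of a single capacitor; and a term with $t_{j}>0$ equals $R/(1+i\omega RC)$ with $C=1/\Gs_{j}$ and $R=\Gs_{j}/t_{j}$, the impedance of a resistor and a capacitor connected in parallel. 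All element values are positive, and connecting these blocks in series adds their impedances, so $Z(\omega)$ is the impedance of a Voigt circuit. This proves (b).

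It is worth recording a cleaner way to organize the closure facts behind (a): the reflection symmetry $\CR\colon f\mapsto-1/(zf)$ of (\ref{invsym}) satisfies $\CR\bigl((f_{1}^{-1}+f_{2}^{-1})^{-1}\bigr)=\CR(f_{1})+\CR(f_{2})$ and interchanges resistors with capacitors, so closure of $\mathfrak{S}_{\CR}$ under addition of impedances --- immediate from Theorem~\ref{th:Stieltjes} --- forces closure under parallel composition as well, which together with an obvious induction settles (a) for all \emph{series--parallel} RC networks. What it does not cover are bridge-type networks, and that is exactly why the nodal-matrix argument is needed in full generality. The main obstacle is therefore the degenerate-case analysis in (a), together with the correct bookkeeping for the grounded reference node (well-posedness of $\BG+i\omega\BC$, and the components containing the two terminals); once Theorem~\ref{th:prodrep} and the simultaneous-diagonalization computation above are in place, everything else is routine.
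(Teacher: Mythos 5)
Your proof is correct, but there is nothing in the paper to compare it with: the paper does not prove Theorem~\ref{th:VoigtZ} at all, quoting it as a classical fact of network synthesis with references to Foster and Cauer. What you have written is a sound self-contained reconstruction. Direction (b) is exactly Foster's partial-fraction synthesis: Theorem~\ref{th:prodrep} gives $f(z)=\Gg+\sum_{j}\Gs_{j}/(t_{j}-z)$, and your identification of $\Gg$ with a series resistor, of a $t_{j}=0$ term with a capacitor $C=1/\Gs_{j}$, and of a $t_{j}>0$ term with a parallel RC block ($C=1/\Gs_{j}$, $R=\Gs_{j}/t_{j}$) is the standard realization --- the finite-rank analogue of the DRT representation (\ref{EIS}) quoted in Section~\ref{sec:Voigt}. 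Direction (a), via the reduced nodal matrices $\BG+i\Go\BC$, simultaneous diagonalization when $\BC>0$, and the regularization $\BC\mapsto\BC+\Ge\BI$ together with closedness of $\mathfrak{S}$ under locally uniform limits when $\BC$ is singular, is a legitimate modern route; the classical sources argue instead through positive-real/reactance-theorem (energy) considerations, and your remark that closure under $f_{1}+f_{2}$ combined with the reflection $\CR$ of (\ref{invsym}) only handles series--parallel topologies correctly explains why some such matrix or energy argument is needed for bridge networks. Two pieces of bookkeeping are worth stating explicitly: the bijection is between \emph{impedance functions} and elements of $\mathfrak{S}_{\CR}$ (many circuits share the same $Z$, so no claim is made at the level of circuits), and degenerate conventions must be fixed --- a circuit whose terminals lie in different connected components has no finite $Z$ and is excluded, while the invertibility of $\BG-z\BC$ for $z\notin[0,+\infty)$, which you need both for rationality of $f$ and for the $\Ge\to0^{+}$ limit, follows from the absence of a common kernel of the reduced $\BG$ and $\BC$, i.e.\ precisely from the connectivity you assume. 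Also note the small inconsistency that after grounding one terminal the excitation vector $\Be$ is a single standard basis vector rather than a difference; either setup works, but the text should pick one.
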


\begin{figure}[t]
  \centering
  \includegraphics[scale=0.24]{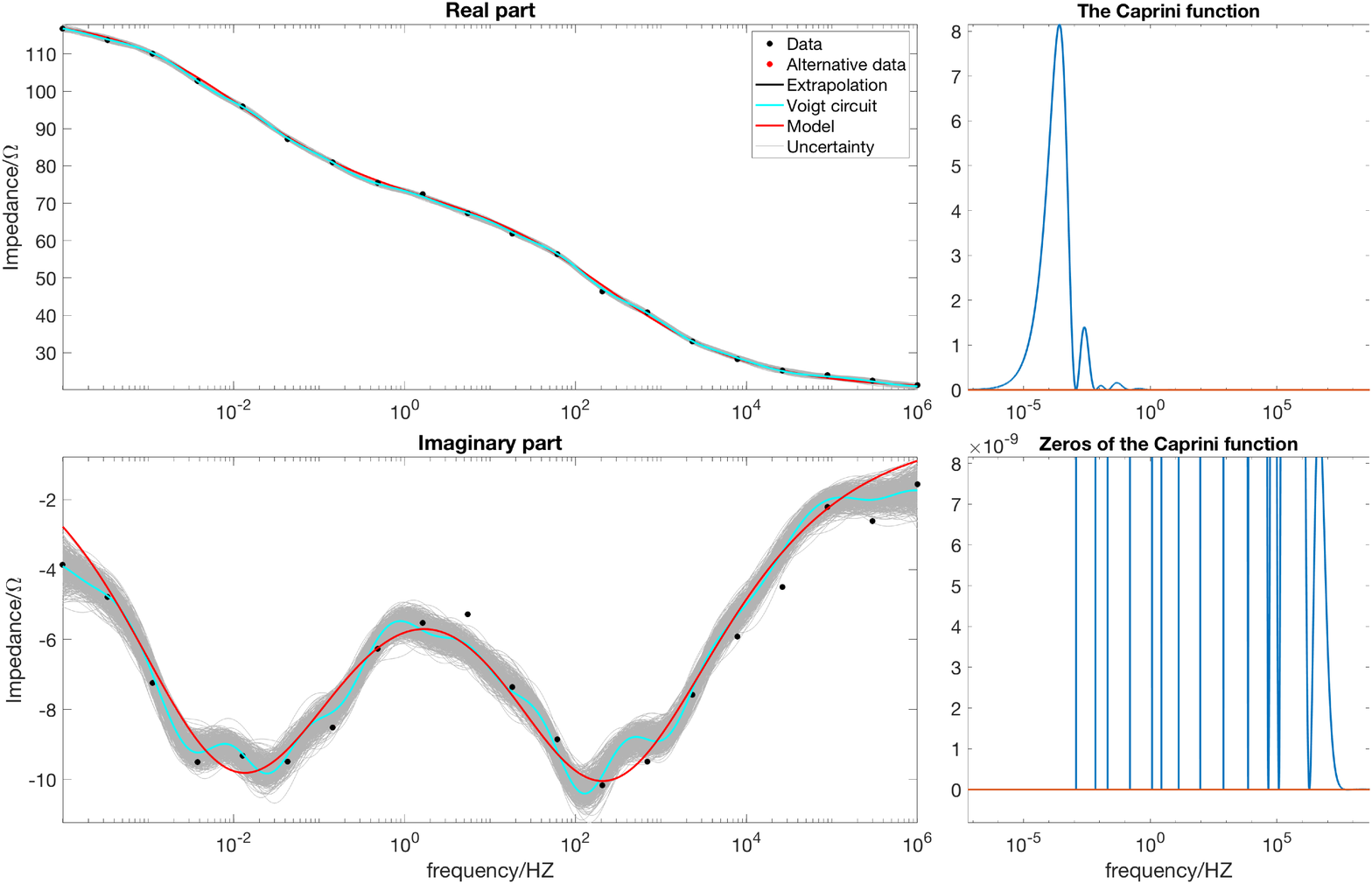}
% Made by rsch/kramers-kronig/Stieltjes/FORTRAN/Voigt4paper.m
% using data4paper.mat
  \caption{The output of the algorithm for a Voigt circuit.}
  \label{fig:Voigt}
\end{figure}
In electrochemistry there are several explicit EIS functions representing
important electrochemical cells, each serving as a building block of more
complex devices. The ideal capacitor's EIS $Z(\Go)=1/(iC\Go)$ is often
replaced by more realistic constant phase element (CPE) with
$Z_{\rm CPE}(\Go)=R/(i\tau\Go)^{\phi}$, $\phi\in[0,1]$.
Connecting it in parallel with a resistor gives the ZARC or Cole-Cole element
\[
Z_{\rm ZARC}(\Go)=\frac{R}{1+(i\tau\Go)^{\phi}},\quad\phi\in[0,1].
\]
A generalization of the ZARC element is the Havriliak-Negami element
\[
Z_{\rm HN}(\Go)=\frac{R}{(1+(i\tau\Go)^{\phi})^{\psi}},\quad\phi\in[0,1],\ \psi\in[0,1].
\]
Following examples in \cite{wscc15}, we test our algorithm on a double
Havriliak-Negami element
\begin{equation}
  \label{DHN}
  Z_{\rm DHN}(\Go)=R_{\infty}+\frac{R_{0}}{(1+(i\tau_{1}\Go)^{\phi})^{\psi}}
+\frac{R_{0}}{(1+(i\tau_{2}\Go)^{\phi})^{\psi}},
\end{equation}
where we chose
$
R_{\infty}=20,\ R_{0}=50,\ \phi=0.5,\ \psi=0.8,\ \tau_{1}=20,\ \tau_{2}=0.001.
$
This element operates on two very different times scales (20 seconds and 1 millisecond)
differing by four orders of magnitude.

The ``experimental data'' was produced by computing $Z_{\rm DHN}(2\pi f)$ at
20 frequencies $f_{j}$ equispaced on the logarithmic scale from
$f_{\min}=10^{-4}$Hz to $f_{\max}=10^{6}$Hz and then polluting the exact values
with 1\% random noise on the relative scale. Figure~\ref{fig:Voigt} shows the
result of the implementation of the algorithm. The real and imaginary parts of
the exact EIS function (\ref{DHN}) are shown in red. The imaginary part has
exactly two local minima at $1/(2\pi\tau_{1})$ and $1/(2\pi\tau_{2})$. Since
the random noise is complex-valued and $\im(Z_{\rm DHN})$ is 10 times
smaller than $\re(Z_{\rm DHN})$, the relative size of the noise for the
imaginary part is actually 10\%. This is why the algorithm's reconstructions
seems to be better for the real part than for the imaginary part. While
absolute errors of reconstruction for both the real and the imaginary parts
are the same, the relative errors differ by a factor of 10.

There is no discernible difference between the actual and the ``alternative
data'' for which the plots of the Caprini function at the global and local
scales show certified optimality. The grey band indicates the uncertainty
of the extrapolation shown by the cyan curve. The cyan curve is a plot of a
rational function whose spectral measure is supported on 20 points. It
coincides to a computer precision with values computed by the recursion
algorithm of Section~\ref{sub:intalg}.
\begin{figure}[t]
  \centering
\includegraphics[scale=0.2]{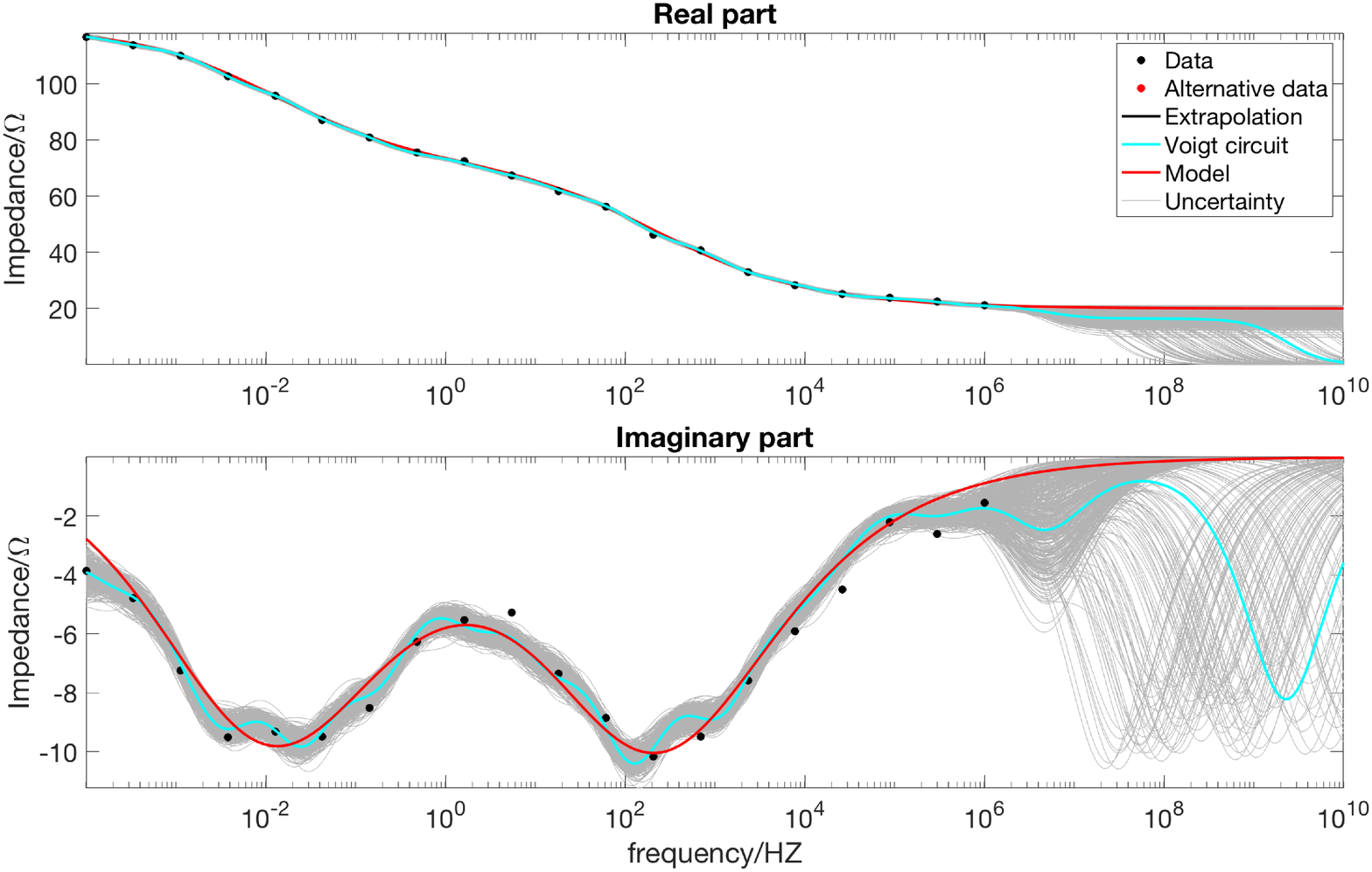}\ 
\includegraphics[scale=0.2]{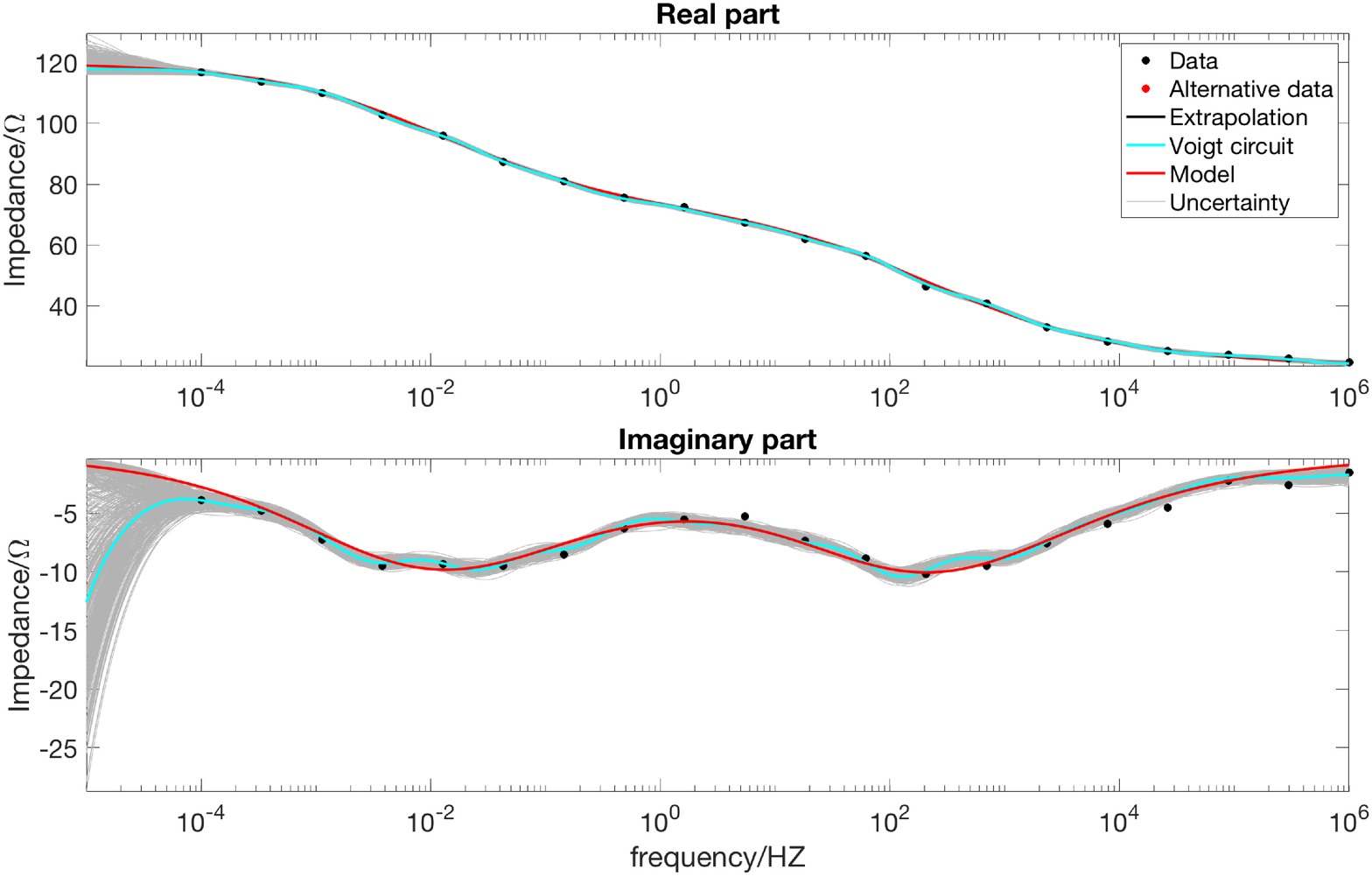}  
% Made by rsch/kramers-kronig/Stieltjes/FORTRAN/Voigt4paper.m
% uses data4paper.mat
  \caption{Extrapolation beyond the experimentally accessible frequency band.}
  \label{fig:Voigt_minus}
\end{figure}
It is important to keep in mind that the results in Figure~\ref{fig:Voigt}
look nice because we are ``filling the gaps'' between measurements. The
situation changes if we try to extrapolate beyond the largest or the smallest
frequency at which the impedance function has been
measured. Figure~\ref{fig:Voigt_minus} illustrates what happens with exactly
the same ``experimental data'' when we ask the algorithm to reconstruct the
EIS function on a larger frequency band. The uncertainty of reconstruction
``explodes'', but our two methods of extrapolation: the recursive and spectral
representation continue to agree. Both panels in Figure~\ref{fig:Voigt_minus}
show a pronounced disagreement between the theoretical and the extrapolated
curves away from the experimentally accessible frequency band, confirming that
it is in general impossible to extrapolate to the entire frequency spectrum reliably.

\medskip

\textbf{Acknowledgments.} The author is grateful to Graeme Milton, Mihai
Putinar, and Vladimir Bolotnikov for their comments and suggestions. A special
thanks goes to the referee who's detailed report have improved the paper
significantly. This material is based upon work supported by the National
Science Foundation under Grant No. DMS-2005538.

\def\cprime{$'$} \ifx \cedla \undefined \let \cedla = \c \fi\ifx \cyr
  \undefined \let \cyr = \relax \fi\ifx \cprime \undefined \def \cprime
  {$\mathsurround=0pt '$}\fi\ifx \prime \undefined \def \prime {'}
  \fi\def\Ya{Ya}

\end{document}